\documentclass[reqno,11pt]{amsart}

\PassOptionsToPackage{svgnames}{xcolor}
\sloppy
\usepackage[object=vectorian]{pgfornament}
\newcommand{\sectionlinetwo}[2]{
\nointerlineskip\vspace{.5\baselineskip}\hspace{\fill}
{\resizebox{0.5\linewidth}{1.2ex}
{\pgfornament[color = #1]{#2}}}
\hspace{\fill}
\par\nointerlineskip\vspace{.5\baselineskip}}

\usepackage{a4wide}
\usepackage{color}
\usepackage{mathrsfs}
\usepackage{mathtools}
\usepackage{amsmath}
\usepackage{amssymb}
\usepackage{bbm}
\usepackage{esint}
\numberwithin{equation}{section}
\usepackage[colorlinks,citecolor=green,linkcolor=red]{hyperref}

\usepackage[latin1]{inputenc}

\newcommand{\1}{\mathbbm 1}
\newcommand{\N}{\mathbb{N}}
\newcommand{\R}{\mathbb{R}}
\newcommand{\B}{\mathbb{B}}
\newcommand{\sfd}{{\sf d}}
\renewcommand{\d}{{\mathrm d}}
\newcommand{\X}{{\rm X}}
\newcommand{\Y}{{\rm Y}}

\newcommand{\fr}{\penalty-20\null\hfill\(\blacksquare\)}

\newtheorem{theorem}{Theorem}[section]
\newtheorem{corollary}[theorem]{Corollary}
\newtheorem{lemma}[theorem]{Lemma}
\newtheorem{proposition}[theorem]{Proposition}
\newtheorem{definition}[theorem]{Definition}

\newtheorem{remark}[theorem]{Remark}

\linespread{1.15}
\setcounter{tocdepth}{2}

\title[The metric-valued Lebesgue differentiation theorem in measure spaces]
{The metric-valued Lebesgue differentiation theorem in measure spaces and its applications}

\author{Danka Lu\v{c}i\'{c}}
\address[Danka Lu\v{c}i\'{c}]{Universit\`{a} di Pisa, Dipartimento di Matematica,
Largo Bruno Pontecorvo 5, 56127 Pisa, Italy}
\email{danka.lucic@dm.unipi.it}

\author{Enrico Pasqualetto}
\address[Enrico Pasqualetto]{Scuola Normale Superiore, Piazza dei Cavalieri 7,
56126 Pisa, Italy}
\email{enrico.pasqualetto@sns.it}

\begin{document}
\date{\today}
\keywords{Lebesgue differentiation theorem, von Neumann lifting, measurable Banach bundle, Radon--Nikod\'{y}m property, disintegration of a measure}
\subjclass[2020]{28A15, 28A51, 46G15, 18F15, 46G10, 46B22, 28A50}
\begin{abstract}
We prove a version of the Lebesgue Differentiation Theorem for mappings that are defined
on a measure space and take values into a metric space, with respect to the differentiation
basis induced by a von Neumann lifting. As a consequence, we obtain a lifting theorem for
the space of sections of a measurable Banach bundle and a disintegration theorem for
vector measures whose target is a Banach space with the Radon--Nikod\'{y}m property.
\end{abstract} 
\maketitle
\tableofcontents
\section{Introduction}
In recent years, there have been impressive developments in the study of differential calculus over metric measure spaces.
A breakthrough in the theory was Gigli's work \cite{Gigli18}, where he proposed the language of \emph{\(L^p\)-normed \(L^\infty\)-modules},
with the aim of building a tensor calculus on top of a robust functional-analytic framework. Similar notions -- whose interest goes beyond
the analysis on metric spaces -- were previously considered, \emph{e.g.}, in \cite{HLR91,Guo-2011}. One of the main concepts introduced in \cite{Gigli18}
is the \emph{cotangent module}, which can be regarded as the space of `abstract \(1\)-forms'. Starting from this object, it was possible to
develop an effective vector calculus even in very singular contexts, where a linear and/or smooth underlying structure is missing. However,
differently from the classical Riemannian setting, the elements of the cotangent module do not have (a priori) a `pointwise meaning'.
The problem of providing a fiberwise description of normed modules was addressed in \cite{LP18,DiMarinoLucicPasqualetto21}, where it is
shown that every separable normed module can be represented as the section space of a \emph{measurable Banach bundle}. Previously, similar
notions of a Banach bundle were investigated, \emph{e.g.}, in \cite{Gut93,GutKop00}. Besides its theoretical interest, the relevance of a fiberwise
characterisation of normed modules became evident with the recent paper \cite{GigliLucicPasqualetto22}, where it has been used to describe dual
modules and pullback modules. We point out that the dual of the pullback of the cotangent module comes into play naturally, for instance, when
studying the \emph{differential of a map of bounded deformation} \cite[Proposition 2.4.6]{Gigli18} or the \emph{velocity of a test plan}
\cite[Theorem 2.3.18]{Gigli18}, which are very important tools in analysis on nonsmooth spaces. As of now, no `direct' description
(\emph{i.e.}, without appealing to Banach bundles) of a dual normed module is available.
\medskip

Motivated by this line of research, in the present paper we pursue the following plan:
\begin{itemize}
\item[\(\rm i)\)] Prove a \emph{Lebesgue Differentiation Theorem} for the sections of a measurable Banach bundle.
In this way, we are entitled to consider the \emph{Lebesgue points} of such a section. At the level of generality
we deal with in this paper (which covers, \emph{e.g.}, metric measure spaces that are not doubling), the
differentiation bases we consider are those induced by a \emph{von Neumann lifting} of the given measure,
which exists under mild assumptions.
\item[\(\rm ii)\)] Construct a (weak form of) \emph{lifting} of the section space of a measurable Banach bundle.
To this aim, the Lebesgue Differentiation Theorem in i) has an essential role. This
allows to select `everywhere defined representatives' of sections in a consistent manner.
\item[\(\rm iii)\)] Obtain a \emph{Disintegration Theorem} for vector-valued measures of bounded variation whose
target is a Banach space having the Radon--Nikod\'{y}m property. The proof of the Disintegration Theorem builds
upon the lifting of sections mentioned in ii). Our interest in vector-valued measures mainly comes from the recent
paper \cite{BG22}, where the theory of \emph{local vector measures} has been introduced. The latter provides a unified
framework for many key objects in nonsmooth analysis, such as metric currents, differentials of Sobolev functions,
and distributional gradients of BV functions.
\end{itemize}
In addition to the new contributions mentioned above, we will provide a thorough (and partially revisited) account
of well-established notions, such as differentiation bases and liftings, with two objectives in mind: to adapt
previously known concepts to our setting, and to make the paper (almost) fully self-contained.
Next we discuss in detail the contents of the paper.
\subsection*{Lifting theory}
A fundamental result in Measure Theory is definitely the celebrated von Neumann--Maharam Theorem, which can be stated as follows:
given a complete, finite measure space \((\X,\Sigma,\mu)\), there exists a selection \(\ell\colon\Sigma\to\Sigma\) of \(\mu\)-a.e.\ representatives
which preserves the empty set, the whole space, finite intersections, and finite unions. An equivalent formulation
can be given at the level of functions: there exists a linear and multiplicative operator
\(\ell\colon L^\infty(\mu)\to\mathcal L^\infty(\Sigma)\) that selects
\(\mu\)-a.e.\ representatives; in other words, \(\ell(f)=f\) holds \(\mu\)-a.e.\ for
every \(f\in L^\infty(\mu)\), and \(\ell\) preserves constant functions, finite sums,
and finite products. Here, \(\mathcal L^\infty(\Sigma)\) stands for the space of all
real-valued, \(\Sigma\)-measurable functions on \(\X\), while \(L^\infty(\mu)\) denotes
its quotient up to \(\mu\)-a.e.\ equality. In either cases, the map \(\ell\) is called a
\textbf{von Neumann lifting}, or just a \textbf{lifting}, of the measure \(\mu\).
The question about the existence of liftings (in the setting of the Euclidean space
endowed with the Lebesgue measure) was originally raised by Haar in 1931
and soon after solved by von Neumann \cite{vonNeumann31}. Afterwards, this
result has been generalised by Maharam \cite{Maharam58} and Ionescu Tulcea--Ionescu
Tulcea \cite{IonescuTulceaIonescuTulcea61} to more general measure spaces. On the one hand,
the finiteness assumption on \(\mu\) can be relaxed: the case of \(\sigma\)-finite measures can be
easily reduced to the finite ones, but the result can be also proved in the larger class of strictly
localisable measures. On the other hand, the role of the completeness
assumption on \(\mu\) is more significant: it is still an open problem whether all
non-complete probability measures admit a lifting. As observed by Shelah \cite{Shelah83},
it is consistent with ZFC that no Borel lifting of the Lebesgue measure on \([0,1]\) exists.
However, under the continuum hypothesis, von Neumann and Stone proved in \cite{vonNeumannStone35}
that a Borel lifting of the Lebesgue measure on \([0,1]\) exists, a result later generalised by
Mokobodzki \cite{Mokobodzki75}. For a more detailed discussion about these topics, we refer to
the books \cite{StraussMacherasMusial02,IonescuTulceaIonescuTulcea69,Fremlin3,Bogachev07}.
\medskip

One of the possible strategies which have been used to prove existence of liftings
is the following: first, to provide a \textbf{lower density} \(\phi\colon\Sigma\to\Sigma\)
of the measure \(\mu\) under consideration, \emph{i.e.}, a map \(\phi\) verifying the
lifting axioms with the exception of the finite unions preservation; then, to `enlarge'
\(\phi\) to a lifting \(\ell\), \emph{i.e.}, to provide a lifting \(\ell\) of \(\mu\)
satisfying \(\phi(E)\subset\ell(E)\) for every set \(E\in\Sigma\). In many interesting
cases, an explicit lower density can be constructed. For instance, whenever a Lebesgue
Density Theorem is in force, one obtains a lower density by just associating to each
measurable set the family of its density points; this idea dates back to the original
paper \cite{vonNeumann31} by von Neumann. On an arbitrary measure space, to show existence
of a lower density is a more delicate issue, which requires a transfinite induction
argument to be employed (see \cite{GrafvonWezsacker76}). Once a lower density is at disposal,
a standard argument (using, \emph{e.g.}, Zorn Lemma \cite[341J]{Fremlin3} or
the Ultrafilter Lemma \cite{Traynor74}) yields the existence of a lifting.
The completeness of \(\mu\) plays a role exclusively in the passage from lower
densities to liftings. It is also worth pointing out that, excluding the very exceptional
framework of Stone spaces, no explicit lifting can be constructed, even when the
lower density is somewhat canonical (\emph{e.g.}, if the Lebesgue Density Theorem
holds with respect to some reasonable differentiation basis). A natural question, though,
is whether an additional structure on the measure space reflects on the possibility to
require further regularity of the lifting. In this regard, a significant example is that of
\textbf{strong liftings} \cite{IonescuTulceaIonescuTulcea67}: if \(\X\) is a complete, separable metric space and \(\Sigma\) is
the completion of the Borel \(\sigma\)-algebra of \(\X\) under \(\mu\), then one can find a
lifting \(\ell\) which is strong, meaning that \(\ell(f)\) and \(f\) agree everywhere on
the support of \(\mu\) whenever \(f\colon\X\to\R\) is bounded continuous.
The metrisability assumption cannot be dropped, as a counterexample built in \cite{Losert79} shows.
We point out that strong liftings that are not Borel liftings exist \cite{Johnson80}.
\medskip

Our presentation of the lifting theory mainly follows along the lines of \cite{Fremlin3} and \cite{Bogachev07}. In Section \ref{ss:lifting_von_Neumann},
we recall the definition of a lifting, the statement of von Neumann--Maharam Theorem, and how to pass from liftings of sets to liftings of functions
(and vice versa). In Section \ref{ss:strong_lift}, we introduce strong liftings. In Appendix \ref{app:Leb_to_lift}, we discuss lower densities, how to
pass from a lower density to a lifting, and how the validity of the Lebesgue Density Theorem induces a lower density. The latter fact provides a relatively
easy proof of the existence of von Neumann liftings on a vast class of measure spaces (see Theorem \ref{thm:exist_lift_separable}) and of strong liftings
on (complete and separable) metric measure spaces (see Corollary \ref{cor:strong_lifting_on_mms}).
\subsection*{Lebesgue Differentiation Theorems in measure spaces}
The classical Lebesgue Differentiation Theorem, which dates back to \cite{Lebesgue10},
states that for \(\mathcal L^n\)-a.e.\ point \(x\in\R^n\), the value of an integrable
function \(f\colon\R^n\to\R\) at \(x\) coincides with the limit of its infinitesimal
averages around \(x\). In fact, a stronger statement holds: setting
\(\fint_E f\,\d\mathcal L^n\coloneqq\mathcal L^n(E)^{-1}\int_E f\,\d\mathcal L^n\), one has
\[
\lim_{r\searrow 0}\fint_{B_r(x)}\big|f(y)-f(x)\big|\,\d\mathcal L^n(y)=0,
\quad\text{ for }\mathcal L^n\text{-a.e.\ }x\in\R^n.
\]
Later on, this fundamental result has been generalised in many different ways.
For instance, it is possible to consider integrable functions defined on domains different
from the Euclidean ones, and to take averages with respect to other classes of `shrinking'
sets than balls. The Lebesgue Differentiation Theorem holds, exactly in the same form as
above, on (asymptotically) doubling metric measure space \cite{Heinonen01}, but often it
can be formulated also in terms of other differentiation bases. By a \textbf{differentiation basis}
on an arbitrary measure space \((\X,\Sigma,\mu)\) we mean a
family \(\mathcal I\) of measurable sets, having positive and finite measure,
which are directed by downward inclusion. Namely, given a point \(x\in\X\) and two sets
\(I,I'\in\mathcal I\) containing \(x\), there exists \(J\in\mathcal I\) such that
\(J\subset I\cap I'\). The family of elements of \(\mathcal I\) containing \(x\) is
typically denoted by \(\mathcal I_x\). Differentiation bases come with a natural notion
of convergence: we say that a map \(\Phi\colon\mathcal I\to\Y\), the target \(\Y\) being
a topological space, has \textbf{\(\mathcal I\)-limit} \(y\in\Y\) at \(x\in\X\) provided
\[
\forall U\text{ neighbourhood of }y\text{ in }\Y,\quad\exists I\in\mathcal I_x:
\quad\forall J\in\mathcal I_x\text{ with }J\subset I,\quad\Phi(J)\in U.
\]
For brevity, we will just write \(\lim_{I\Rightarrow x}\Phi(I)=y\). With this terminology,
a generalised form of Lebesgue Differentiation Theorem can be formulated as follows:
for \(f\colon\X\to\R\) integrable,
\[
\lim_{I\Rightarrow x}\fint_I\big|f(y)-f(x)\big|\,\d\mu(y)=0,
\quad\text{ for }\mu\text{-a.e.\ }x\in\X.
\]
A well-studied problem in the literature is to understand for which spaces and differentiation bases
the above version of Lebesgue Differentiation Theorem holds; for a thorough account of this theory,
we refer to \cite{Bruckner71,BrucknerBrucknerThomson97,BrucknerBrucknerThomson08} and the references therein.
In this paper, we will focus our attention on a special class of differentiation bases:
those induced by a von Neumann lifting. Given a complete, finite measure space
\((\X,\Sigma,\mu)\) and a lifting \(\ell\) of \(\mu\), we denote by \(\mathcal I^\ell\) the
family of all sets of the form \(\ell(E)\), where \(E\in\Sigma\) satisfies \(\mu(E)>0\). 
It readily follows from the lifting axioms that \(\mathcal I^\ell\) defines a
differentiation basis on \((\X,\Sigma,\mu)\). Furthermore, it is known that
\(\mathcal I^\ell\) verifies the Lebesgue Differentiation Theorem as stated above
(see \cite{Kolzow68}). On the one hand, this type of differentiation basis has the advantage
of being available in the great generality of measure spaces, without the need for
an underlying topological or metric structure. On the other hand, the procedure of finding
liftings (and thus a fortiori the associated differentiation bases) is highly
non-constructive. In other words, albeit very interesting from a theoretical perspective,
differentiation bases induced by a lifting are maybe not very useful from a computational point
of view. Nonetheless, this issue is irrelevant for the applications that we will describe
in the present paper, since liftings will be used only as an intermediate technical tool during the
proofs.
\medskip

Our main result is Theorem \ref{thm:Lebesgue_diff}, which is a generalisation of the
above-discussed Lebesgue Differentiation Theorem in measure spaces to more general
targets than the real line, namely, to metric-valued maps. More specifically, we will
prove that any measurable map \(\varphi\colon\X\to\Y\), where \((\X,\Sigma,\mu)\) is a
complete measure space and \((\Y,\sfd)\) is a separable metric space, satisfies
\[
\lim_{I\Rightarrow x}\fint_I\sfd\big(\varphi(y),\varphi(x)\big)\,\d\mu(y)=0,
\quad\text{ for }\mu\text{-a.e.\ }x\in\X,
\]
where the limit is taken with respect to the differentiation basis \(\mathcal I^\ell\)
induced by an arbitrary lifting \(\ell\) of \(\mu\). As an immediate corollary, we deduce
that if \(\B\) is a Banach space and \(v\colon\X\to\B\) is a strongly measurable map,
then it holds that
\[
\lim_{I\Rightarrow x}\fint_I v(y)\,\d\mu(y)=v(x),\quad\text{ for }\mu\text{-a.e.\ }x\in\X,
\]
where the integral is intended in the sense of Bochner. The proof argument we provide
is rather elementary and much shorter than other ones we found in the literature
(for the real-valued Lebesgue Differentiation Theorem in measure spaces), such as the
one in \cite{BrucknerBrucknerThomson08}. Our strategy is to show that every measurable map
\(\varphi\colon\X\to\Y\) as above is `approximately continuous' at almost every point of \(\X\)
with respect to \(\mathcal I^\ell\), in the sense expressed by Lemma \ref{lem:pre_Leb}.
Once this property is established, Theorem \ref{thm:Lebesgue_diff} easily follows.
This line of thought, based on an approximate continuity argument, is similar in spirit to
the work done in \cite{BliedtnerLoeb00,LoebTalvila04}.
\medskip

An immediate consequence of Theorem \ref{thm:Lebesgue_diff} is Corollary
\ref{cor:diff_measures_RNP}, which states the following. Consider a \(\B\)-valued
measure \(\Omega\) on \((\X,\Sigma)\) with bounded variation, where \(\B\) is a
Banach space having the Radon--Nikod\'{y}m property (RNP for brevity),
and suppose that \(\Omega\ll\mu\). Then the Radon--Nikod\'{y}m derivative
\(\frac{\d\Omega}{\d\mu}\colon\X\to\B\) can be expressed as a `genuine' derivative as
\[
\frac{\d\Omega}{\d\mu}(x)=\lim_{I\Rightarrow x}\frac{\Omega(I)}{\mu(I)}\in\B,
\quad\text{ for }\mu\text{-a.e.\ }x\in\X,
\]
where the limit is taken with respect to the differentiation
basis \(\mathcal I^\ell\).
\medskip

In Sections \ref{ss:diff_basis}, \ref{ss:Bochner_int}, and \ref{ss:RNP} we remind
the basic notions concerning differentiation bases, Bochner integration, and the
Radon--Nikod\'{y}m property, respectively. In Section \ref{s:LDT} we prove our
version of Lebesgue Differentiation Theorem in measure spaces for metric-valued maps,
as well as its direct consequences.
In Appendix \ref{app:approx_cont} we study more in details the concept
of approximate continuity with respect to a differentiation basis,
which is used in Lemma \ref{lem:pre_Leb}.
\subsection*{Banach bundles and liftings of sections}
The main motivation behind our study of the metric-valued Lebesgue Differentiation
Theorem and its applications comes from the theory of measurable Banach bundles and
\(L^p\)-normed \(L^\infty\)-modules. Let us introduce the relevant terminology.
Fix a measure space \((\X,\Sigma,\mu)\) and an ambient Banach space \(\B\).
Following \cite{DiMarinoLucicPasqualetto21}, we say that a multivalued map
\({\mathbf E}\colon\X\twoheadrightarrow\B\) is a \textbf{measurable Banach \(\B\)-bundle}
over \((\X,\Sigma)\) provided it is weakly measurable (in an appropriate sense)
and each \({\mathbf E}(x)\) is a linear subspace of \(\B\). We denote by
\(\bar\Gamma_p(\mu;{\mathbf E})\) the space of all \(p\)-integrable sections of
\(\mathbf E\), while \(\Gamma_p(\mu;{\mathbf E})\) stands for its quotient up
to \(\mu\)-a.e.\ equality. Our interest towards measurable Banach \(\B\)-bundles
is, in turn, due to the fact that the space of sections \(\Gamma_p(\mu;{\mathbf E})\)
is an \textbf{\(L^p(\mu)\)-normed \(L^\infty(\mu)\)-module} in the sense of Gigli
\cite{Gigli18,Gigli17}. One of the primary outcomes of
\cite{DiMarinoLucicPasqualetto21} is that actually every separable \(L^p(\mu)\)-normed
\(L^\infty(\mu)\)-module can be written as a space of sections of the form
\(\Gamma_p(\mu;{\mathbf E})\) for some Banach \(\B\)-bundle \(\mathbf E\). This result is
relevant if read in conjuction with the fact that, under mild assumptions, the cotangent
module associated with a metric measure space is separable
(cf.\ with \cite[Appendix B]{DiMarinoLucicPasqualetto21}).
In reality, the language of normed modules was introduced precisely with the aim of
building a solid differential calculus for metric measure spaces, and cotangent modules
provide a convenient concept of a covector field in this (possibly very singular)
framework. Ultimately, the results we will discuss next say that it is possible to
talk about the Lebesgue points of a covector field on a metric measure space, whose
pointwise meaning is inherently encoded in the Banach bundle machinery.
We also mention that the analysis of measurable Banach bundles covers the well-studied
theory of Lebesgue--Bochner spaces, which correspond to considering those Banach bundles
whose fibers are constant.
\medskip

In Theorem \ref{thm:precise_repr} we introduce the precise representative map for
the \(L^p\)-sections of a given measurable Banach \(\B\)-bundle \(\mathbf E\) over
\((\X,\Sigma)\): as a corollary of Theorem \ref{thm:Lebesgue_diff}, one has that
any given element \(v\in\Gamma_p(\mu;{\mathbf E})\) admits a unique representative
\(\hat v\in\bar\Gamma_p(\mu;{\mathbf E})\) satisfying
\[
\hat v(x)=\lim_{I\Rightarrow x}\fint_I v(y)\,\d\mu(y)\in{\mathbf E}(x),
\]
for any \(x\in\X\) such that the limit in the right-hand side exists,
and \(\hat v(x)\coloneqq 0_{\mathbf E(x)}\) elsewhere. Any such
point (thus, \(\mu\)-a.e.\ point) is called a \textbf{Lebesgue point}
of \(v\). Observe that \(\hat v\) is independent of the chosen representative of \(v\).
Building upon Theorem \ref{thm:precise_repr}, we obtain (see Theorem \ref{thm:lift_sects})
a lifting map at the level of the sections of a Banach bundle, which generalises the
von Neumann liftings of functions in a natural way. More precisely, we show that
a von Neumann lifting \(\ell\) of \(\mu\) induces a linear choice of a representative
\(\ell\colon\Gamma_\infty(\mu;{\mathbf E})\to\bar\Gamma_\infty(\mu;\B'')\) such that
\[\begin{split}
\ell(f\cdot v)(x)=\ell(f)(x)\,\ell(v)(x),&\quad\text{ for every }v\in\Gamma_\infty
(\mu;{\mathbf E}),\,f\in L^\infty(\mu),\text{ and }x\in\X,\\
\big\|\ell(v)(x)\big\|_{\B''}\leq\ell\big(\|v(\cdot)\|_{{\mathbf E}(\cdot)}\big)(x),&
\quad\text{ for every }v\in\Gamma_\infty(\mu;{\mathbf E})\text{ and }x\in\X.
\end{split}\]
The first condition expresses a compatibility with the lifting \(\ell\) at the level
of functions, whence no ambiguity should arise. About the second condition, one would
probably expect it to be an identity rather than an inequality. However, with the
exception of some special situations, this is not only unlikely but in fact truly
impossible. Roughly speaking, the lifting of sections can verify
\(\big\|\ell(v)(\cdot)\big\|_{\B''}=\ell\big(\|v(\cdot)\|_{{\mathbf E}(\cdot)}\big)\)
only for those elements \(v\) that have compact (essential) image, as we will see
in Proposition \ref{prop:improved_lift_sects}. In the case of Lebesgue--Bochner spaces,
this kind of behaviour is consistent with the fact that a norm-preserving
lifting \(\ell\colon L^\infty(\mu;\B)\to\mathcal L^\infty(\mu;\B)\) can exist if and only if
\(\B\) is finite-dimensional (or \(\mu\) is purely atomic). This surprising phenomenon
was known, we will report it in Theorem \ref{thm:lift_Leb-Bochn}.
Due to this reason, in the literature we only found notions of liftings for maps
taking values into a locally compact space (see, \emph{e.g.}, \cite{IonescuTulceaIonescuTulcea69b,IonescuTulceaIonescuTulcea69}).
To the best of our knowledge, the weaker form of lifting we provide in Theorem \ref{thm:lift_sects}
for arbitrary target (which contracts, but does not preserve, the norm) was not
previously investigated. However, it has some interesting applications, such as a
disintegration theorem for RNP-valued measures (see Section \ref{ss:disint}).
Coming back to our lifting of sections of \(\mathbf E\), we point out that its target is the
larger space \(\bar\Gamma_\infty(\mu;\B'')\), where \(\B''\) stands for the bidual
of \(\B\). In some cases of interest (for instance, if \(\B\) has a predual,
cf.\ with Remark \ref{rmk:target_B}), the target of \(\ell\) can be actually required to be
\(\bar\Gamma_\infty(\mu;\B)\), but we do believe this is not possible in full generality.
\medskip

Let us spend a few words to describe the strategy we will adopt to achieve
Theorem \ref{thm:lift_sects}, which is partially inspired by von Neumann's original
proof of existence of liftings in the Euclidean space. Fix a complete, finite measure
space \((\X,\Sigma,\mu)\), a lifting \(\ell\) of \(\mu\), and a measurable Banach
\(\B\)-bundle \(\mathbf E\) on \((\X,\Sigma)\). Given any \(x\in\X\), the differentiation
basis \(\mathcal I^\ell\) induces a filter of `generalised neighbourhoods' of \(x\), which
we can extend to an ultrafilter \(\omega_x\) by applying the Ultrafilter Lemma. The key
feature of the ultrafilter \(\omega_x\) is that \(\lim_{I\Rightarrow x}\Phi(I)=y\) implies
that the ultralimit of \(\Phi\) with respect to \(\omega_x\) coincides with \(y\). The
construction of \(\omega_x\) will be committed to Lemma \ref{lem:construct_ultrafilter}.
At this point, the lifting map \(\ell\colon\Gamma_\infty(\mu;{\mathbf E})\to
\bar\Gamma_\infty(\mu;\B'')\) can be defined as follows: given any element
\(v\in\Gamma_\infty(\mu;{\mathbf E})\) and any point \(x\in\X\), we set
\[
\ell(v)(x)\coloneqq\omega_x\text{-}\lim\fint_I v(y)\,\d\mu(y),
\]
where we are tacitly identifying \(\B\) with its canonical image into \(\B''\).
The ultralimit is taken with respect to the weak\(^*\) topology of \(\B''\), and
its existence is ensured by Banach--Alaoglu Theorem; this explains why the lifting
of sections takes values into \(\B''\). Theorem \ref{thm:Lebesgue_diff} plays a
fundamental role here, as it ensures that \(\ell(v)\) is a representative of \(v\).
The linearity of \(\ell\) stems from general properties of ultralimits. The multiplicative
identity \(\ell(f\cdot v)=\ell(f)\,\ell(v)\) boils down to the almost tautological
(yet interesting) fact that for any \(f\in L^\infty(\mu)\) one has
\[
\ell(f)(x)=\omega_x\text{-}\lim\fint_I f(y)\,\d\mu(y),
\quad\text{ for \emph{every} point }x\in\X,
\]
which will be proved in Proposition \ref{prop:compat_lift}. Finally, the validity of
\(\big\|\ell(v)(\cdot)\big\|_{\B''}\leq\ell\big(\|v(\cdot)\|_{{\mathbf E}(\cdot)}\big)\)
follows from the weak\(^*\) lower semicontinuity of the norm \(\|\cdot\|_{\B''}\).
\medskip

In Sections \ref{ss:MBB} and \ref{ss:ultralimits} we recall the theory of measurable
Banach bundles and the most important concepts related to ultrafilters/ultralimits,
respectively. In Section \ref{s:lift_sections} we prove the existence theorem for
liftings of sections, as well as all the preparatory results that we need in order to
achieve it. Appendix \ref{app:lift_Leb-Boch} is dedicated to a slight revision of the
previously-known lifting theory of Lebesgue--Bochner spaces, which constitutes a
particular case of our study.
\subsection*{Disintegration of vector measures}
One of the most striking applications of the theory of von Neumann liftings (or, more
specifically, of strong liftings) is the Disintegration Theorem for measures. By suitably
adapting a known strategy and employing Theorem \ref{thm:lift_sects}, we obtain in Section
\ref{s:disint} a rather general disintegration result for RNP-valued measures on Polish spaces,
which states the following. Let \(\varphi\colon\X\to\Y\) be a Borel map between Polish spaces,
\(\B\) a Banach space having the Radon--Nikod\'{y}m property, and \(\Omega\) a
\(\B\)-valued measure on \(\X\) of bounded variation. Then there exists an (essentially
unique) disintegration \(\{\Omega_y\}_{y\in\Y}\) of \(\Omega\) along the map \(\varphi\).
Namely, \(\Omega_y\) is a \(\B\)-valued measure on \(\X\) of bounded variation with
\(\|\Omega_y\|_\B(\X)=1\) and concentrated on \(\varphi^{-1}(\{y\})\) for
\(\varphi_\#\|\Omega\|_\B\)-a.e.\ \(y\in\Y\); moreover, the map
\(\Y\ni y\mapsto\int f\,\d\Omega_y\in\B\) is strongly measurable and satisfies
\[
\int f\,\d\Omega=\int\!\!\!\int f\,\d\Omega_y\,\d(\varphi_\#\|\Omega\|_\B)(y),
\]
whenever \(f\in C_b(\X)\) is given. The existence of \(\{\Omega_y\}_{y\in\Y}\) will be proved in Theorem \ref{thm:disint_vect_meas}.
\medskip

We now briefly comment on the heuristic behind the proof of the Disintegration Theorem. For
the sake of exposition, suppose to know that a disintegration \(\{\Omega_y\}_{y\in\Y}\)
of \(\Omega\) along \(\varphi\) exists, and that \(\X\) is compact. Fix a strong lifting \(\ell\)
of \(\nu\coloneqq\varphi_\#\|\Omega\|_\B\). Then for every \(f\in C(\X)\)
we can formally compute
\[
\int\!\!\!\int f\,\d\Omega_y\,\d\nu(y)=\int f\,\d\Omega=\varphi_\#(f\Omega)(\Y)=
\int\ell\bigg(\frac{\d\varphi_\#(f\Omega)}{\d\nu}\bigg)(y)\,\d\nu(y),
\]
where in the last term the lifting
\(\ell\colon L^\infty(\nu;\B)\to\mathcal L^\infty(\nu;\B'')\) provided by Theorem \ref{thm:lift_sects} appears.
Technically speaking there is an inconsistency, as \(\ell\big(\frac{\d\varphi_\#(f\Omega)}{\d\nu}\big)(y)\) belongs to \(\B''\).
However, ignoring this issue for the moment, the previous formula is suggesting that
\[
\int f\,\d\Omega_y=\ell\bigg(\frac{\d\varphi_\#(f\Omega)}{\d\nu}\bigg)(y),\quad\text{ for every }f\in C(\X).
\]
We then take this as a definition of \(\Omega_y\). By using Theorem \ref{thm:lift_sects}, one can prove that \(\Omega_y\)
takes actually values in \(\B\) for \(\nu\)-a.e.\ \(y\in\Y\) and gives a disintegration of \(\Omega\) along \(\varphi\).
The fact of considering \(\ell\big(\frac{\d\varphi_\#(f\Omega)}{\d\nu}\big)(y)\) rather than \(\frac{\d\varphi_\#(f\Omega)}{\d\nu}(y)\)
is fundamental. The problem is already at the level of the definition: given any \(f\in C(\X)\), the vector
\(\frac{\d\varphi_\#(f\Omega)}{\d\nu}(y)\in\B\) is defined for \(\nu\)-a.e.\ \(y\in\Y\), but it is not clear
(unless we invoke the lifting theory) how to invert the quantifiers. In other words, how to show that for \(\nu\)-a.e.\ \(y\in\Y\)
the vector \(\frac{\d\varphi_\#(f\Omega)}{\d\nu}(y)\) is well-defined for every function \(f\in C(\X)\).
We also point out that the inequality \(\big\|\ell(v)(\cdot)\big\|_{\B''}\leq\ell\big(\|v(\cdot)\|_\B\big)\)
appearing in Theorem \ref{thm:lift_sects} is still sufficient to achieve the Disintegration Theorem, so that
no finite-dimensionality on the target space \(\B\) is required.
\subsection*{About the Axiom of Choice}
We conclude the Introduction by concisely commenting on the passages where the Axiom
of Choice, or some of its weaker forms, will be used throughout:
\begin{itemize}
\item The existence of von Neumann liftings strongly relies upon the Axiom of Choice.
\item In Lemma \ref{lem:construct_ultrafilter}, we employ the Ultrafilter Lemma.
\item During the proof of Theorem \ref{thm:lift_sects}, we use Banach--Alaoglu Theorem.
\end{itemize}
It is evident that, when working at this level of generality, the usage of the Axiom
of Choice is unavoidable. Nevertheless, we believe that in many cases of interest
(\emph{e.g.}, on metric measure spaces or when working with separable Banach bundles)
the use of liftings (and more generally of the Axiom of Choice) is not necessary,
as observed for example in \cite[Remark 4.14]{DiMarinoLucicPasqualetto21}.
\section{Preliminaries}
Let us begin by fixing some general terminology, which will be
used throughout the whole paper.
Given a topological space \((\Y,\tau)\) and a point \(y\in\Y\), we
denote by \(\mathcal N_\Y(y)\subset\tau\) the family of all open
neighbourhoods of \(y\) in \((\Y,\tau)\).
Given a measure space \((\X,\Sigma,\mu)\) and an exponent
\(p\in[1,\infty)\), we denote by \(\mathcal L^p(\mu)\)
the space of all \(p\)-integrable functions \(f\colon\X\to\R\),
\emph{i.e.}, of all measurable functions \(f\colon\X\to\R\) whose
associated quantity \(\|f\|_{\mathcal L^p(\mu)}\) is finite, where
\[
\|f\|_{\mathcal L^p(\mu)}\coloneqq
\bigg(\int|f|^p\,\d\mu\bigg)^{1/p}.
\]
We denote by \(\mathcal L^\infty(\Sigma)\) the space of all
bounded, measurable functions \(f\colon\X\to\R\) and we define
\[
\|f\|_{\mathcal L^\infty(\Sigma)}\coloneqq\sup_{x\in\X}|f(x)|.
\]
It holds that \(\mathcal L^p(\mu)\) is a complete seminormed space
for every \(p\in[1,\infty)\), while \(\mathcal L^\infty(\Sigma)\) is
a Banach space. For any \(p\in[1,\infty)\), we introduce an
equivalence relation \(\sim_\mu\) on \(\mathcal L^p(\mu)\): given any
\(f,g\in\mathcal L^p(\mu)\), we declare that \(f\sim_\mu g\) if and
only if \(f=g\) holds \(\mu\)-a.e.\ on \(\X\). We denote by
\(L^p(\mu)\) the quotient space \(\mathcal L^p(\mu)/\sim_\mu\),
which is called the \textbf{\(p\)-Lebesgue space}, and by
\(\pi_\mu\colon\mathcal L^p(\mu)\to L^p(\mu)\) the canonical
projection map. Observe that \(L^p(\mu)\) is a Banach space.
Similarly, we introduce an equivalence relation \(\sim_\mu\) on
\(\mathcal L^\infty(\Sigma)\) as above, we denote by \(L^\infty(\mu)\)
the quotient space \(\mathcal L^\infty(\Sigma)/\sim_\mu\), and we call
\(\pi_\mu\colon\mathcal L^\infty(\Sigma)\to L^\infty(\mu)\) the canonical
projection map. It holds that \(L^\infty(\mu)\) is a Banach space if endowed
with the quotient norm, \emph{i.e.}, with
\[
\|f\|_{L^\infty(\mu)}\coloneqq\inf\big\{\|\bar f\|_{\mathcal L^\infty(\Sigma)}
\;\big|\;\bar f\in\mathcal L^\infty(\Sigma),\,\pi_\mu(\bar f)=f\big\},\quad
\text{ for every }f\in L^\infty(\mu).
\]

Given a Banach space \(\B\), we denote by \(\B'\) its continuous dual space and by \(J_\B\colon\B\hookrightarrow\B''\)
the \textbf{James' embedding} operator into the bidual space, which is defined as
\[
J_\B(v)(\omega)\coloneqq\omega(v),\quad\text{ for every }v\in\B\text{ and }\omega\in\B'.
\]
Recall that \(J_\B\) is a linear isometric map and that it is surjective if and only if \(\B\) is reflexive.
\subsection{Liftings in the sense of von Neumann}\label{ss:lifting_von_Neumann}
Let us recall the basic notions in the lifting theory of von Neumann. Our presentation follows along the one in Fremlin's book \cite{Fremlin3}.
\medskip

A \textbf{Boolean ring} is a ring \((\mathscr R,+,\cdot)\)
such that \(a^2=a\) for every \(a\in\mathscr R\). In particular, it
holds that \(a=-a\) and \(ab=ba\) for every \(a,b\in\mathscr R\).
A \textbf{Boolean algebra} is a Boolean ring with a multiplicative
identity \(1_{\mathscr R}\). Given any two Boolean algebras \(\mathscr R\)
and \(\mathscr R'\), we say that a map \(\phi\colon\mathscr R\to\mathscr R'\)
is a \textbf{Boolean homomorphism} provided it is a ring homomorphism
such that \(\phi(1_{\mathscr R})=1_{\mathscr R'}\). Given a set
\(\X\neq\varnothing\) and an algebra of sets \(\Sigma\subset 2^\X\),
the triplet \((\Sigma,\Delta,\cap)\) is a Boolean algebra with zero
\(\varnothing\) and identity \(\X\). We recall the notion of a von Neumann lifting:
\begin{definition}[von Neumann lifting]\label{def:lifting}
Let \((\X,\Sigma,\mu)\) be a measure space with \(\mu\neq 0\). Then a map \(\ell\colon\Sigma\to\Sigma\)
is called a \textbf{von Neumann lifting} of \(\mu\) if it satisfies the following properties:
\begin{itemize}
\item[\(\rm i)\)] \(\ell\) is a Boolean homomorphism, namely, it holds that
\[
\ell(\varnothing)=\varnothing,\quad\ell(\X)=\X,\quad\ell(E\cap F)=\ell(E)\cap\ell(F),\quad\ell(E\cup F)=\ell(E)\cup\ell(F),
\]
for every \(E,F\in\Sigma\).
\item[\(\rm ii)\)] \(\ell(E)=\ell(F)\) for every \(E,F\in\Sigma\) such that \(\mu(E\Delta F)=0\).
\item[\(\rm iii)\)] \(\mu\big(E\Delta\ell(E)\big)=0\) for every \(E\in\Sigma\).
\end{itemize}
\end{definition}

It easily follows from the lifting axioms that each von Neumann lifting \(\ell\) satisfies
\[
\ell\big(\ell(E)\big)=\ell(E),\quad\text{ for every }E\in\Sigma.
\]
Liftings exist in high generality. For our purposes, the following existence result is sufficient:
\begin{theorem}[von Neumann--Maharam]\label{thm:von_Neumann_Maharam}
Let \((\X,\Sigma,\mu)\) be a complete, \(\sigma\)-finite measure space with \(\mu\neq 0\). Then a von Neumann lifting
\(\ell\) of the measure \(\mu\) exists.
\end{theorem}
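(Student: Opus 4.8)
The plan is to follow the classical two-stage strategy sketched in the Introduction: first produce a \textbf{lower density} of \(\mu\), and then enlarge it to a genuine lifting, with the completeness of \(\mu\) entering only in the second stage. Before starting, I would reduce to the case of a probability measure. Axioms ii) and iii) of Definition~\ref{def:lifting} refer to \(\mu\) only through its null ideal, so any lifting of a measure equivalent to \(\mu\) is automatically a lifting of \(\mu\). Writing \(\X=\bigsqcup_{n\in\N}X_n\) with \(0<\mu(X_n)<\infty\), the finite measure \(\tilde\mu\coloneqq\sum_{n}2^{-n}\mu(X_n)^{-1}\mu(\,\cdot\,\cap X_n)\) is equivalent to \(\mu\) and has exactly the same null sets; in particular \((\X,\Sigma,\tilde\mu)\) is still complete. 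Hence I may assume henceforth that \(\mu\) is a complete probability measure.

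The heart of the proof, and the step I expect to be the main obstacle, is the construction of a lower density \(\phi\colon\Sigma\to\Sigma\), \emph{i.e.}, a map satisfying all the requirements of a lifting except possibly the preservation of finite unions. I would build \(\phi\) by transfinite recursion. Fix a well-ordered family generating \(\Sigma\) modulo null sets, and construct an increasing transfinite chain of sub-\(\sigma\)-algebras \((\Sigma_\alpha)\) carrying compatible lower densities \(\phi_\alpha\), with \(\phi_\beta\) extending \(\phi_\alpha\) whenever \(\alpha\le\beta\). Successor steps adjoin a single generator and are handled directly; the genuinely delicate point is the countably-generated (limit) increments. If \(\Sigma_\infty\) is generated by an increasing sequence \(\Sigma_1\subset\Sigma_2\subset\cdots\) on which densities \(\phi_n\) are already available, then for \(E\in\Sigma_\infty\) the martingale \(M_n\coloneqq\mathbb E[\1_E\mid\Sigma_n]\) converges \(\mu\)-a.e.\ to \(\1_E\) by the martingale convergence theorem; using the sets \(\{M_n>1/2\}\in\Sigma_n\) together with their images under \(\phi_n\) one manufactures a representative \(\phi_\infty(E)\in\Sigma_\infty\), and then one checks that the density axioms pass to the limit. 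Assembling these increments through the transfinite recursion yields \(\phi\) on all of \(\Sigma\). I stress that completeness of \(\mu\) is \emph{not} needed at this stage.

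Finally, I would upgrade \(\phi\) to a lifting by a maximality argument, invoking Zorn's Lemma. Consider the collection of \emph{partial liftings dominating \(\phi\)}: pairs \((\mathscr A,\ell_0)\) where \(\mathscr A\subset\Sigma\) is a subalgebra containing the null sets, \(\ell_0\colon\mathscr A\to\Sigma\) is a Boolean homomorphism satisfying ii)--iii) on \(\mathscr A\), and \(\phi(E)\subset\ell_0(E)\) for every \(E\in\mathscr A\). Ordered by extension, every chain admits an upper bound obtained by taking unions, so Zorn's Lemma produces a maximal element \((\mathscr A,\ell_0)\); it then remains to verify \(\mathscr A=\Sigma\). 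Were there some \(E\in\Sigma\setminus\mathscr A\), I would extend \(\ell_0\) to the subalgebra generated by \(\mathscr A\cup\{E\}\) by choosing \(\ell_0(E)\) between the lower bound \(\phi(E)\) and the upper bound \(\X\setminus\phi(\X\setminus E)\), compatibly with all Boolean relations imposed by \(\mathscr A\). This is precisely where completeness of \(\mu\) is indispensable: the candidate for \(\ell_0(E)\) is prescribed only up to a \(\mu\)-null set, and completeness guarantees that a genuinely \(\Sigma\)-measurable representative realising the required identities exists, contradicting maximality. Therefore \(\mathscr A=\Sigma\), and \(\ell\coloneqq\ell_0\) is the desired von Neumann lifting of \(\mu\).
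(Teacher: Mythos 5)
Your proposal is correct in outline, but you should be aware that the paper does not actually prove Theorem \ref{thm:von_Neumann_Maharam} in this generality: it cites \cite[Section 341]{Fremlin3} for the proof, and what you have written is essentially a faithful sketch of that classical Maharam/Fremlin argument (transfinite construction of a lower density, with the martingale convergence theorem handling the countable-cofinality limit stages, followed by a maximality argument in which completeness enters only to guarantee measurability of sets sandwiched between \(\phi(E)\) and \(\X\setminus\phi(\X\setminus E)\)). Your reduction to a probability measure matches Remark \ref{rmk:wlog_mu_finite}, and your diagnosis of where completeness is and is not needed agrees with the paper's discussion and with \cite{GrafvonWezsacker76}. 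The one place where your sketch is genuinely thin is the limit stage: manufacturing \(\phi_\infty(E)\) from the sets \(\{M_n>1/2\}\) so that the \emph{exact} (not merely \(\mu\)-a.e.) identity \(\phi_\infty(E\cap F)=\phi_\infty(E)\cap\phi_\infty(F)\) holds, and so that \(\phi_\infty\) extends every \(\phi_n\), requires a careful \(\liminf\)-type construction; this is the known hard point of the theorem and is resolved in \cite[341G]{Fremlin3}, but as stated your step is an appeal to that argument rather than a proof of it.

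Where the paper differs is in Appendix \ref{app:Leb_to_lift}, which gives a much shorter self-contained proof under the additional (mild) hypothesis that the measure space is separable: a single refining sequence of countable partitions together with Doob's martingale convergence theorem (Theorem \ref{thm:Doob}, from \cite{BG22}) produces a lower density \emph{directly} as the density-point map \(D_{\mathcal I}\) of Theorem \ref{thm:from_Leb_to_lift}, with no transfinite recursion at all; the upgrade from lower density to lifting (Lemma \ref{lem:from_lower_dens_to_lift}) is then done pointwise via the Ultrafilter Lemma, as in \cite{Traynor74}, rather than via Zorn's Lemma as you propose. Both upgrades are standard and the paper's Introduction mentions both; the ultrafilter route is cleaner (the lifting is written down explicitly as \(\ell(E)=\{x:E\in\omega_x\}\)) and uses a strictly weaker choice principle. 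Your approach buys the full generality of the von Neumann--Maharam theorem at the cost of the transfinite induction; the paper's appendix buys a short, almost constructive proof covering all the cases it actually needs (in particular all Polish spaces, where the lifting is automatically strong, cf.\ Corollary \ref{cor:strong_lifting_on_mms}).
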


We refer, \emph{e.g.}, to \cite[Section 341]{Fremlin3} for a proof of Theorem \ref{thm:von_Neumann_Maharam}.
However, under some additional (quite mild) assumptions, its proof can be simplified (see Theorem \ref{thm:exist_lift_separable}).
\begin{remark}\label{rmk:wlog_mu_finite}{\rm
Given any \(\sigma\)-finite measure space \((\X,\Sigma,\mu)\), it is clearly possible to construct a finite measure \(\tilde\mu\)
on \((\X,\Sigma)\) such that \(\mu\ll\tilde\mu\ll\mu\), and thus accordingly \(L^\infty(\tilde\mu)=L^\infty(\mu)\).
Observe also that a given map \(\ell\colon\Sigma\to\Sigma\) is a von Neumann lifting of \(\mu\) if and only if it is a von Neumann
lifting of \(\tilde\mu\). Therefore, in several occasions (for instance, during the proof of Theorem \ref{thm:lift_sects})
it will not be restrictive to assume that \(\mu\) is finite.
\fr}\end{remark}

Given a measurable space \((\X,\Sigma)\) and a set \(E\in\Sigma\), we denote by \(\1_E\in\mathcal L^\infty(\Sigma)\) the
\textbf{characteristic function} of \(E\), namely, we define \(\1_E(x)\coloneqq 1\) if \(x\in E\) and \(\1_E(x)\coloneqq 0\)
if \(x\in\X\setminus E\). The space of \textbf{simple functions} on \((\X,\Sigma)\) is then given by
\begin{equation}\label{eq:def_simple_fct}
\mathcal S(\Sigma)\coloneqq\bigg\{\sum_{i=1}^n\lambda_i\1_{E_i}\;\bigg|\;n\in\N,\,(\lambda_i)_{i=1}^n\subset\R,\,
(E_i)_{i=1}^n\subset\Sigma\text{ partition of }\X\bigg\}.
\end{equation}
Now fix a measure \(\mu\) on \((\X,\Sigma)\). For the sake of brevity, we denote
\[
\1_E^\mu\coloneqq\pi_\mu(\1_E)\in L^\infty(\mu),\quad\text{ for every }E\in\Sigma.
\]
Moreover, we denote by \(S(\mu)\) the image of \(\mathcal S(\Sigma)\) under \(\pi_\mu\), namely,
\[
S(\mu)\coloneqq\bigg\{\sum_{i=1}^n\lambda_i\1_{E_i}^\mu\;\bigg|\;n\in\N,\,(\lambda_i)_{i=1}^n\subset\R,
\,(E_i)_{i=1}^n\subset\Sigma\text{ partition of }\X\bigg\}\subset L^\infty(\mu).
\]
It holds that the spaces \(\mathcal S(\Sigma)\) and \(S(\mu)\) are dense in \(\mathcal L^\infty(\Sigma)\) and \(L^\infty(\mu)\), respectively.
\begin{theorem}[Lifting of functions]\label{thm:lift_fcts}
Let \((\X,\Sigma,\mu)\) be a measure space and \(\ell\) a von
Neumann lifting of \(\mu\). Then there exists a unique linear,
continuous map \(\ell\colon L^\infty(\mu)\to\mathcal L^\infty(\Sigma)\)
such that
\begin{equation}\label{eq:def_lift_fcts}
\ell(\1_E^\mu)=\1_{\ell(E)},\quad\text{ for every }E\in\Sigma.
\end{equation}
Moreover, the operator
\(\ell\colon L^\infty(\mu)\to\mathcal L^\infty(\Sigma)\) satisfies
\begin{subequations}\begin{align}
\label{eq:lift_fcts_p1}\pi_\mu\big(\ell(f)\big)=f,&\quad
\text{ for every }f\in L^\infty(\mu),\\
\label{eq:lift_fcts_p2}\ell(fg)=\ell(f)\,\ell(g),&\quad
\text{ for every }f,g\in L^\infty(\mu),\\
\label{eq:lift_fcts_p3}\big\|\ell(f)\big\|_{\mathcal L^\infty(\Sigma)}
=\|f\|_{L^\infty(\mu)},&\quad\text{ for every }f\in L^\infty(\mu).
\end{align}\end{subequations}
\end{theorem}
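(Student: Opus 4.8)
The plan is to build $\ell$ first on the dense subspace $S(\mu)\subset L^\infty(\mu)$ of simple functions and then to extend it by continuity. Given $f=\sum_{i=1}^n\lambda_i\1_{E_i}^\mu\in S(\mu)$ with $(E_i)_i$ a partition of $\X$, I would set
\[
\ell(f)\coloneqq\sum_{i=1}^n\lambda_i\1_{\ell(E_i)}.
\]
Since $\ell\colon\Sigma\to\Sigma$ is a Boolean homomorphism, axiom i) guarantees that $(\ell(E_i))_i$ is again a partition of $\X$: indeed $\bigcup_i\ell(E_i)=\ell\big(\bigcup_i E_i\big)=\ell(\X)=\X$, while $\ell(E_i)\cap\ell(E_{i'})=\ell(E_i\cap E_{i'})=\ell(\varnothing)=\varnothing$ for $i\neq i'$. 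Hence the right-hand side is a genuine element of $\mathcal S(\Sigma)$, and it reduces to $\1_{\ell(E)}$ when $f=\1_E^\mu$, giving \eqref{eq:def_lift_fcts}.

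The first real point to settle is that this definition does not depend on the chosen representation of $f$. Suppose $\sum_i\lambda_i\1_{E_i}^\mu=\sum_j\eta_j\1_{F_j}^\mu$ in $L^\infty(\mu)$, with $(E_i)_i$ and $(F_j)_j$ partitions, and consider the common refinement $G_{ij}\coloneqq E_i\cap F_j$. Using that $\ell$ preserves finite disjoint unions, I would expand $\1_{\ell(E_i)}=\sum_j\1_{\ell(G_{ij})}$ and hence $\sum_i\lambda_i\1_{\ell(E_i)}=\sum_{i,j}\lambda_i\1_{\ell(G_{ij})}$. Now two facts from the axioms enter. If $\mu(G_{ij})=0$, then axioms i)--ii) force $\ell(G_{ij})=\ell(\varnothing)=\varnothing$, so such terms drop out; if instead $\mu(G_{ij})>0$, then $f$ has a well-defined $\mu$-a.e.\ value on $G_{ij}$, whence $\lambda_i=\eta_j$. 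Replacing $\lambda_i$ by $\eta_j$ in the surviving terms and running the symmetric computation for the $F_j$'s yields $\sum_i\lambda_i\1_{\ell(E_i)}=\sum_j\eta_j\1_{\ell(F_j)}$, proving well-definedness. Linearity on $S(\mu)$ is then immediate by passing to common refinements.

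Next I would establish the isometry \eqref{eq:lift_fcts_p3} on $S(\mu)$. Writing $f$ via its level sets (so that the $\lambda_i$ are the distinct attained values), one has $\|f\|_{L^\infty(\mu)}=\max\{|\lambda_i|:\mu(E_i)>0\}$ and $\|\ell(f)\|_{\mathcal L^\infty(\Sigma)}=\max\{|\lambda_i|:\ell(E_i)\neq\varnothing\}$. These two maxima coincide because $\ell(E)=\varnothing$ if and only if $\mu(E)=0$: the implication $\Leftarrow$ is axiom ii) together with $\ell(\varnothing)=\varnothing$, while $\Rightarrow$ follows from $\mu\big(E\,\Delta\,\ell(E)\big)=0$ in axiom iii). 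Thus $\ell\colon S(\mu)\to\mathcal L^\infty(\Sigma)$ is a linear isometry. Since $S(\mu)$ is dense in $L^\infty(\mu)$ and $\mathcal L^\infty(\Sigma)$ is complete, $\ell$ extends uniquely to a linear isometry $\ell\colon L^\infty(\mu)\to\mathcal L^\infty(\Sigma)$, which establishes existence together with \eqref{eq:lift_fcts_p3}. Uniqueness is clear, as any continuous linear map satisfying \eqref{eq:def_lift_fcts} is forced on $S(\mu)$ and hence, by density, everywhere.

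Finally, properties \eqref{eq:lift_fcts_p1} and \eqref{eq:lift_fcts_p2} would be checked first on characteristic functions and then propagated. For \eqref{eq:lift_fcts_p1}, axiom iii) gives $\1_{\ell(E)}=\1_E$ $\mu$-a.e., i.e.\ $\pi_\mu\big(\ell(\1_E^\mu)\big)=\1_E^\mu$; since $\pi_\mu\circ\ell$ and the identity are continuous and agree on the dense set $S(\mu)$, they coincide on $L^\infty(\mu)$. For \eqref{eq:lift_fcts_p2}, the computation $\ell(\1_E^\mu\,\1_F^\mu)=\1_{\ell(E\cap F)}=\1_{\ell(E)\cap\ell(F)}=\1_{\ell(E)}\1_{\ell(F)}$ (again axiom i)) shows multiplicativity on characteristic functions, hence on $S(\mu)$; as both $(f,g)\mapsto\ell(fg)$ and $(f,g)\mapsto\ell(f)\ell(g)$ are jointly continuous bilinear maps (pointwise multiplication being continuous on the Banach algebras $L^\infty(\mu)$ and $\mathcal L^\infty(\Sigma)$) agreeing on the dense set $S(\mu)\times S(\mu)$, they agree everywhere. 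The main obstacle in the whole argument is precisely the well-definedness step, where the interplay between the purely algebraic homomorphism property i) and the measure-theoretic axioms ii)--iii) must be handled through the common-refinement computation; everything else is a routine density-and-continuity extension.
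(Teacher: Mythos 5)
Your argument is correct and follows essentially the same route as the paper's own (sketched) proof: define \(\ell\) on \(S(\mu)\) by \(\sum_i\lambda_i\1_{E_i}^\mu\mapsto\sum_i\lambda_i\1_{\ell(E_i)}\), extend by density and continuity, and verify \eqref{eq:lift_fcts_p1}--\eqref{eq:lift_fcts_p3} first on simple functions and then by approximation. The only difference is that you spell out details the paper leaves implicit (well-definedness via common refinements, the equivalence \(\ell(E)=\varnothing\Leftrightarrow\mu(E)=0\) behind the isometry), and these are handled correctly.
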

\begin{proof}[Sketch of proof]
The required linearity and \eqref{eq:def_lift_fcts} force to define \(\ell\colon S(\mu)\to\mathcal S(\Sigma)\) as
\begin{equation}\label{eq:def_lift_simple_fcts}
\ell\bigg(\sum_{i=1}^n\lambda_i\1_{E_i}^\mu\bigg)\coloneqq\sum_{i=1}^n\lambda_i\1_{\ell(E_i)},\quad
\text{ for every }\sum_{i=1}^n\lambda_i\1_{E_i}^\mu\in S(\mu).
\end{equation}
Then the required continuity and the density of \(S(\mu)\) in \(L^\infty(\mu)\) imply that the map defined in
\eqref{eq:def_lift_simple_fcts} can be uniquely extended to a linear, continuous map \(\ell\colon L^\infty(\mu)\to\mathcal L^\infty(\Sigma)\).
Finally, the identities stated in \eqref{eq:lift_fcts_p1}, \eqref{eq:lift_fcts_p2}, and \eqref{eq:lift_fcts_p3} can
be first checked on \(S(\mu)\) directly from the definition \eqref{eq:def_lift_simple_fcts}, and then deduced on the whole
\(L^\infty(\mu)\) by an approximation argument.
\end{proof}

Under the identification \(\Sigma\ni E\mapsto\1_E\in\mathcal L^\infty(\Sigma)\), we know from \eqref{eq:def_lift_fcts} that the
von Neumann lifting \(\ell\colon\Sigma\to\Sigma\) agrees with \(\ell\circ\pi_\mu\colon\mathcal L^\infty(\Sigma)\to\mathcal L^\infty(\Sigma)\).
Therefore, to adopt the same symbol \(\ell\) both for the von Neumann lifting \(\ell\colon\Sigma\to\Sigma\) and for the induced lifting
\(\ell\colon L^\infty(\mu)\to\mathcal L^\infty(\Sigma)\) at the level of functions should cause no ambiguity.
We also point out that in some references (see, \emph{e.g.}, the monograph \cite{Bogachev07}) von Neumann liftings are defined
directly on the space of functions and a posteriori their action on sets is deduced via \eqref{eq:def_lift_fcts}.
\begin{remark}{\rm
In general, in Theorem \ref{thm:lift_fcts} the space \(L^\infty(\mu)\) cannot be replaced
by other reasonable function spaces. For instance, it is shown in \cite{vonNeumann31} that liftings of
\(L^0(\mu)\) might not exist, where \(L^0(\mu)\) stands for the space of measurable
functions on \(\X\) considered up to \(\mu\)-a.e.\ equality. Concerning \(L^p(\mu)\)
spaces with \(p\in[1,\infty)\), a first issue is given by the fact that they are typically
not closed under multiplication. However, it is proved in \cite{StraussMacherasMusial02}
that not even \emph{linear} liftings defined on \(L^p(\mu)\) exist for a generic measure \(\mu\). 
\fr}\end{remark}
\subsection{Differentiation bases}\label{ss:diff_basis}
Below we recall the key concepts concerning the language of differentiation bases, which we
learnt from \cite{BrucknerBrucknerThomson08}. As a matter of convenience, we slightly modify some definitions,
in order to adapt them to our purposes. In this paper, we will be mostly concerned with
differentiation bases induced by a lifting (cf.\ with Lemma \ref{lem:diff_basis_by_lift}).
\begin{definition}[Differentiation basis]\label{def:diff_basis}
Let \((\X,\Sigma,\mu)\) be a measure space. Then a family
\(\mathcal I\subset\Sigma\) is said to be a \textbf{differentiation basis}
on \((\X,\Sigma,\mu)\) provided the following properties hold:
\begin{itemize}
\item[\(\rm i)\)] \(0<\mu(I)<+\infty\) for every \(I\in\mathcal I\).
\item[\(\rm ii)\)] Calling
\(\mathcal I_x\coloneqq\{I\in\mathcal I\,:\,x\in I\}\)
for any \(x\in\X\), we have that
\(\hat\X\coloneqq\{x\in\X\,:\,\mathcal I_x\neq\varnothing\}\in\Sigma\) and \(\mu(\X\setminus\hat\X)=0\). We say that \(\hat\X\) is the
\textbf{regular set} associated with \(\mathcal I\).
\item[\(\rm iii)\)] \(\mathcal I\) is \textbf{directed by downward inclusion},
meaning that if \(x\in\hat\X\) and \(I,I'\in\mathcal I_x\),
then there exists \(J\in\mathcal I_x\) such that \(J\subset I\cap I'\).
\end{itemize}
Moreover, given any \(x\in\hat\X\) and \(I\in\mathcal I_x\), we define
the family \(\mathcal U^{\mathcal I}_x(I)\subset\mathcal I_x\) as
\[
\mathcal U^{\mathcal I}_x(I)\coloneqq\{J\in\mathcal I_x\;|\;J\subset I\}.
\]
\end{definition}

Each differentiation basis \(\mathcal I\) is naturally associated with the
following concept of \(\mathcal I\)-limit.
\begin{definition}[\(\mathcal I\)-limit]
Let \((\X,\Sigma,\mu)\) be a measure space and let \(\mathcal I\) be
a differentiation basis on \((\X,\Sigma,\mu)\). Let \((\Y,\tau)\)
be a Hausdorff topological space and let \(\Phi\colon\mathcal I\to\Y\)
be a given map. Fix \(x\in\hat\X\). Then we declare that the
\textbf{\(\mathcal I\)-limit} of \(\Phi\) at \(x\) exists and
is equal to \(y\in\Y\), briefly
\[
\lim_{I\Rightarrow x}\Phi(I)=y,
\]
provided for any \(U\in\mathcal N_\Y(y)\) there exists \(I\in\mathcal I_x\)
such that \(\Phi(J)\in U\) holds for every \(J\in\mathcal U^{\mathcal I}_x(I)\).
\end{definition}
Observe that if two maps \(\Phi,\Psi\colon\mathcal I\to\Y\) agree on \(\mathcal U^{\mathcal I}_x(I)\) for some \(x\in\hat\X\) and
\(I\in\mathcal I_x\), then it holds \(\lim_{I\Rightarrow x}\Phi(I)=\lim_{I\Rightarrow x}\Psi(I)\), in the sense that one such
\(\mathcal I\)-limit exists if and only if the other one exists, and in that case they coincide. Consequently, one can define
\(\lim_{I\Rightarrow x}\Phi(I)\) as soon as \(\Phi\) is defined just on \(\mathcal U^{\mathcal I}_x(I)\) for some 
\(I\in\mathcal I_x\), and not necessarily on the whole \(\mathcal I\).
\medskip

Hereafter, we will mostly focus our attention on those differentiation bases that are induced by a von Neumann lifting,
in the sense that is explained by the following result.
\begin{lemma}[Differentiation basis induced by a lifting]\label{lem:diff_basis_by_lift}
Let \((\X,\Sigma,\mu)\) be a complete, \(\sigma\)-finite measure space. Let \(\ell\) be a von Neumann lifting of \(\mu\). Then the family
\[
\mathcal I^\ell\coloneqq\big\{\ell(E)\;\big|\;E\in\Sigma,\,0<\mu(E)<+\infty\big\}\subset\Sigma
\]
is a differentiation basis on \((\X,\Sigma,\mu)\).
\end{lemma}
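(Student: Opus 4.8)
The plan is to verify the three defining properties of a differentiation basis (Definition \ref{def:diff_basis}) directly from the von Neumann lifting axioms (Definition \ref{def:lifting}), exploiting the idempotency relation $\ell(\ell(E))=\ell(E)$ recorded just after Definition \ref{def:lifting}.

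First I would check property i). Any element of $\mathcal I^\ell$ has the form $\ell(E)$ with $0<\mu(E)<+\infty$. By lifting axiom iii), $\mu(E\Delta\ell(E))=0$, so $\mu(\ell(E))=\mu(E)$; hence $0<\mu(\ell(E))<+\infty$, which is exactly i). Next, for property iii), suppose $x\in\ell(E)\cap\ell(E')$ for two generators of $\mathcal I^\ell$. By the Boolean-homomorphism axiom i), $\ell(E)\cap\ell(E')=\ell(E\cap E')$. The obstacle here is that $\mu(E\cap E')$ might vanish, in which case $\ell(E\cap E')=\ell(\varnothing)=\varnothing$ by axioms ii) and i), contradicting $x\in\ell(E\cap E')$; therefore $\mu(E\cap E')>0$, and $J\coloneqq\ell(E\cap E')=\ell(E)\cap\ell(E')$ is itself a member of $\mathcal I^\ell$ with $J\subset\ell(E)\cap\ell(E')$ and $x\in J$. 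Thus $\mathcal I^\ell$ is directed by downward inclusion.

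The main point is property ii), concerning the regular set $\hat\X=\{x\in\X:\mathcal I^\ell_x\neq\varnothing\}$. I claim that $\X\setminus\hat\X$ is itself a member of $\mathcal I^\ell$-generators' complement that the lifting sends to something negligible. Concretely, I would argue that $\hat\X=\bigcup\mathcal I^\ell\supset\ell(\X)=\X$ whenever $\mu(\X)>0$ and $\X$ has finite measure, using $\ell(\X)=\X$ from axiom i); but in the merely $\sigma$-finite case one must be more careful. Writing $\X=\bigcup_{n}\X_n$ with $\X_n\in\Sigma$ increasing and $0<\mu(\X_n)<+\infty$, each $\ell(\X_n)\in\mathcal I^\ell$ and $\mu(\X_n\Delta\ell(\X_n))=0$ by axiom iii). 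Setting $N\coloneqq\bigcup_n(\X_n\setminus\ell(\X_n))$, this is a $\mu$-null set in $\Sigma$, and every point of $\bigcup_n\ell(\X_n)$ lies in some element of $\mathcal I^\ell$, so $\bigcup_n\ell(\X_n)\subset\hat\X$. Since $\X=\bigcup_n\X_n\subset N\cup\bigcup_n\ell(\X_n)$, we get $\X\setminus\hat\X\subset N$, whence $\hat\X\in\Sigma$ (by completeness of $\mu$) and $\mu(\X\setminus\hat\X)=0$.

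The step I expect to demand the most care is the measurability and negligibility of $\X\setminus\hat\X$ in the genuinely $\sigma$-finite (non-finite) setting: one cannot simply invoke $\ell(\X)=\X$, since $\X\notin\Sigma$ as a finite-measure set, so the exhaustion argument above is essential and relies on completeness of $(\X,\Sigma,\mu)$ to conclude $\hat\X\in\Sigma$. Once these three properties are established, the family $\mathcal I^\ell$ satisfies Definition \ref{def:diff_basis} and the proof is complete.
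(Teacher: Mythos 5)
Your proposal is correct and follows essentially the same route as the paper: property i) from the lifting axiom \(\mu(E\Delta\ell(E))=0\), directedness from the Boolean-homomorphism axiom via \(\ell(E)\cap\ell(E')=\ell(E\cap E')\), and property ii) by covering \(\X\) with countably many finite-positive-measure sets, observing that their liftings cover \(\X\) up to a \(\mu\)-null set, and invoking completeness to conclude \(\hat\X\in\Sigma\). The only cosmetic difference is that you use an increasing exhaustion where the paper uses a partition.
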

\begin{proof}
Items i) and iii) of Definition \ref{def:diff_basis} easily follow from items iii) and i) of Definition \ref{def:lifting}, respectively.
Concerning item ii) of Definition \ref{def:diff_basis}, observe that the \(\sigma\)-finiteness assumption on \(\mu\) yields existence of
a partition \((E_n)_{n\in\N}\subset\Sigma\) of \(\X\) with \(0<\mu(E_n)<+\infty\) for all \(n\in\N\). Hence, \(\ell(E_n)\in
\mathcal I^\ell_x\) for every \(n\in\N\) and \(x\in\ell(E_n)\), thus accordingly \(\bigcup_n\ell(E_n)\subset\hat\X\). Since \(\mu\) is
complete and \(\mu\big(\X\setminus\bigcup_n\ell(E_n)\big)=0\), we conclude that \(\hat\X\in\Sigma\) and \(\mu(\X\setminus\hat\X)=0\).
\end{proof}
\begin{remark}{\rm
It can be readily checked that if \(\hat\X\) is the regular set associated with \(\mathcal I^\ell\), then
\[
\mu(\X)<+\infty\quad\Longrightarrow\quad\hat\X=\X.
\]
Indeed, in this case it holds that \(x\in\X=\ell(\X)\), and thus \(\X\in\mathcal I^\ell_x\), for every \(x\in\X\).
\fr}\end{remark}
\subsection{Bochner integration}\label{ss:Bochner_int}
Here, we recall the basics of the integration theory in the sense of Bochner
for maps that take values into a Banach space. The notion of Bochner integral,
which was introduced in \cite{Bochner33} and \cite{Dunford35}, is a vector-valued generalisation
of the Lebesgue integral, so in particular it is defined by means of an approximation
argument using simple maps. Some classical references about the Bochner integral are
\cite{DunfordSchwartz58} and \cite{DiestelUhl77}.
\medskip

Let \((\X,\Sigma,\mu)\) be a measure space and \(\B\) a Banach space. Then we denote by
\(\mathcal S(\mu;\B)\) the family of all \textbf{\(\B\)-simple maps} on \((\X,\Sigma,\mu)\). Namely, we define
\begin{equation}\label{eq:def_simple_map}
\mathcal S(\mu;\B)\coloneqq\bigg\{\sum_{i=1}^n\1_{E_i} v_i\;\bigg|\;
\begin{array}{ll}
n\in\N,\,(E_i)_{i=1}^n\subset\Sigma\text{ pairwise disjoint sets},\\
\mu(E_i)<\infty\text{ for all }i=1,\ldots,n,\,(v_i)_{i=1}^n\subset\B
\end{array}\bigg\}.
\end{equation}
Observe that \(\mathcal S(\mu;\B)\) is a linear space if endowed with the natural pointwise operations. Given a set \(E\in\Sigma\),
we define the map \(\mathcal S(\mu;\B)\ni v\mapsto\int_E v\,\d\mu\in\B\) as
\begin{equation}\label{eq:int_simple}
\int_E v\,\d\mu\coloneqq\sum_{i=1}^n\mu(E\cap E_i) v_i\in\B,\quad\text{ for every }v=\sum_{i=1}^n\1_{E_i} v_i\in\mathcal S(\mu;\B).
\end{equation}
The resulting operator \(\int_E\#\,\d\mu\) is linear, and satisfies \(\int_E\|v(\cdot)\|_\B\,\d\mu=\sum_{i=1}^n\mu(E\cap E_i)\|v_i\|_\B\) and
\(\big\|\int_E v\,\d\mu\big\|_\B\leq\int_E\|v(\cdot)\|_\B\,\d\mu\). In the case \(E=\X\), we just write \(\int v\,\d\mu\) in place of \(\int_\X v\,\d\mu\).
\begin{remark}{\rm
We point out that \(\mathcal S(\Sigma)\) and \(\mathcal S(\mu;\R)\) do not coincide when \(\mu\) is an infinite measure. However, we have that
\[
\mathcal S(\mu;\R)=\big\{f\in\mathcal S(\Sigma)\;\big|\;\mu(\{f\neq 0\})<+\infty\big\},
\quad\text{ where }\{f\neq 0\}\coloneqq\big\{x\in\X\;\big|\;f(x)\neq 0\big\}.
\]
On the one hand, for the elements of \(\mathcal S(\Sigma)\) a finiteness assumption on \(\mu(\{f\neq 0\})\) would
destroy the density of \(\mathcal S(\Sigma)\) in \(\mathcal L^\infty(\Sigma)\). On the other hand, such finiteness
assumption is needed for the elements of \(\mathcal S(\mu;\B)\) in order to give a meaning to the integral in \eqref{eq:int_simple}.
\fr}\end{remark}
\begin{definition}[Strong measurability]
Let \((\X,\Sigma,\mu)\) be a \(\sigma\)-finite measure space, \(\B\) a Banach space. Then a map \(v\colon\X\to\B\) is said to
be \textbf{strongly measurable} if it is \(\mu\)-measurable and there exists a sequence \((v_n)_{n\in\N}\subset\mathcal S(\mu;\B)\)
of \(\B\)-simple maps on \((\X,\Sigma,\mu)\) such that
\[
\lim_{n\to\infty}\big\|v_n(x)-v(x)\big\|_\B=0,
\quad\text{ for }\mu\text{-a.e.\ }x\in\X.
\]
\end{definition}

As an immediate consequence of the definition, all \(\B\)-simple maps are strongly measurable. Hereafter, we will often consider
strongly measurable maps up to \(\mu\)-a.e.\ equality. Namely, we define an equivalence relation \(\sim_\mu\) on the set of all
strongly measurable maps from \(\X\) to \(\B\) as follows: given any \(v,w\colon\X\to\B\) that are strongly measurable, we declare that
\begin{equation}\label{eq:def_equiv_rel_mu_ae}
v\sim_\mu w\quad\Longleftrightarrow\quad v(x)=w(x),\,\text{ for }\mu\text{-a.e.\ }x\in\X.
\end{equation}
We denote by \(S(\mu;\B)\) the quotient of \(\mathcal S(\mu;\B)\) with respect to such equivalence relation \(\sim_\mu\).
\begin{definition}[Lebesgue--Bochner space]
Let \((\X,\Sigma,\mu)\) be a \(\sigma\)-finite measure space, \(\B\) a Banach space.
Let \(p\in[1,\infty]\) be given. Then we say that a strongly measurable map \(v\colon\X\to\B\) belongs to \(\mathcal L^p(\mu;\B)\)
if it holds that \(\|v(\cdot)\|_\B\in\mathcal L^p(\mu)\). The \textbf{Lebesgue--Bochner space} \(L^p(\mu;\B)\) is defined as
the quotient of \(\mathcal L^p(\mu;\B)\) with respect to the equivalence relation \(\sim_\mu\) as in \eqref{eq:def_equiv_rel_mu_ae}.
\end{definition}

Each Lebesgue--Bochner space \(L^p(\mu;\B)\) is a Banach space with respect to the norm
\[
\|v\|_{L^p(\mu;\B)}\coloneqq\big\|\|v(\cdot)\|_\B\big\|_{L^p(\mu)},\quad\text{ for every }v\in L^p(\mu;\B).
\]
It is well-known (see, \emph{e.g.}, \cite[Proposition 1.2.2]{HNVW16}) that for any \(v\in\mathcal L^1(\mu;\B)\)
there exists a sequence \((v_n)_{n\in\N}\subset\mathcal S(\mu;\B)\) such that \(\lim_n\big\|v_n(x)-v(x)\big\|_\B=0\) holds
for \(\mu\)-a.e.\ point \(x\in\X\) and \(\lim_n\int\|v_n(\cdot)-v(\cdot)\|_\B\,\d\mu=0\). Consequently, the following definition is meaningful.
\begin{definition}[Bochner integral]
Let \((\X,\Sigma,\mu)\) be a \(\sigma\)-finite measure space, \(\B\) a Banach space.
Let \(v\colon\X\to\B\) be strongly measurable. Then we say that
\(v\) is \textbf{Bochner integrable} over a set \(E\in\Sigma\) provided
\(\1_E\cdot v\in\mathcal L^1(\mu;\B)\). The \textbf{Bochner integral}
of \(v\) over \(E\) is defined as follows: given any
\((v_n)_{n\in\N}\subset\mathcal S(\mu;\B)\) such that
\(\lim_n\big\|v_n(x)-v(x)\big\|_\B=0\) for \(\mu\)-a.e.\ \(x\in E\)
and \(\lim_n\int_E\|v_n(\cdot)-v(\cdot)\|_\B\,\d\mu=0\), we set
\begin{equation}\label{eq:def_Bochner_int}
\int_E v\,\d\mu\coloneqq\lim_{n\to\infty}\int_E v_n\,\d\mu\in\B,
\end{equation}
where the limit is intended with respect to the strong topology of \(\B\).
In the case where \(E=\X\), we just write \(\int v\,\d\mu\) in place of
\(\int_\X v\,\d\mu\).
\end{definition}
It is easy to show that \eqref{eq:def_Bochner_int} is well-posed, in the sense that
the limit appearing in \eqref{eq:def_Bochner_int} exists and it does not depend on the
specific choice of the approximating sequence \((v_n)_{n\in\N}\).
\medskip

It is worth pointing out that if \(v\colon\X\to\B\)
is Bochner integrable on \(E\in\Sigma\), then it holds
\[
\bigg\|\int_E v\,\d\mu\bigg\|_\B\leq\int_E\|v(\cdot)\|_\B\,\d\mu.
\]
This follows from the corresponding statement for \(\B\)-simple
maps and the density of \(S(\mu;\B)\) in \(L^1(\mu;\B)\), thus
the integral operator \(\int_E\#\,\d\mu\colon L^1(\mu;\B)\to\B\)
is linear and continuous.
\begin{remark}[Local Bochner integrability]\label{rmk:local_Boch}{\rm
Let \((\X,\Sigma,\mu)\) be a \(\sigma\)-finite measure space,
\(\mathcal I\) a differentiation basis on \((\X,\Sigma,\mu)\),
and \(\B\) a Banach space. Then a given strongly measurable map
\(v\colon\X\to\B\) is said to be \textbf{locally Bochner integrable
with respect to \(\mathcal I\)} provided for \(\mu\)-a.e.\ point
\(x\in\hat\X\) there exists a set \(I\in\mathcal I_x\) such that
\(v\) is Bochner integrable over \(I\).

In the case where \(\mu(\X)<+\infty\) and \(\mathcal I=\mathcal I^\ell\) for some von Neumann lifting \(\ell\) of \(\mu\), \emph{every}
\(\mu\)-measurable map \(v\colon\X\to\B\) is locally Bochner integrable with respect to \(\mathcal I^\ell\). Indeed, calling
\[
E_k\coloneqq\big\{x\in\X\;\big|\;k-1\leq\|v(x)\|_\B<k\big\}\in\Sigma,\quad\text{ for every }k\in\N,
\]
we have that \((E_k)_{k\in\N}\) is a partition of \(\X\) and that \((\1_{E_k}\cdot v)_{k\in\N}\subset\mathcal L^\infty(\mu;\B)\).
In particular, the map \(v\) is Bochner integrable over each set \(\ell(E_k)\in\mathcal I^\ell\). Since \(\bigcup_k\ell(E_k)\)
has full \(\mu\)-measure in \(\X\), we deduce that \(v\) is locally Bochner integrable with respect to \(\mathcal I^\ell\).
This observation explains why, differently from the classical Lebesgue differentiation theorem, in the statement
of our main Theorem \ref{thm:Lebesgue_diff} no local integrability assumption will be explicitly required.
\fr}\end{remark}
\subsection{Measurable Banach bundles}\label{ss:MBB}
In this section, we recall the notion of a measurable Banach bundle over a
measurable space, which has been introduced in \cite{DiMarinoLucicPasqualetto21}.
Before passing to the actual definition,
we first need to fix some terminology. A \textbf{multivalued map}
\(\boldsymbol\varphi\colon S\twoheadrightarrow T\) between two
sets \(S\) and \(T\) is a mapping from \(S\) to the power set
\(2^T\) of \(T\). Given a measurable space \((\X,\Sigma)\)
and a topological space \((\Y,\tau)\), we say that a
multivalued map \(\boldsymbol\varphi\colon\X\twoheadrightarrow\Y\)
is \textbf{weakly measurable} provided it holds that
\[
\big\{x\in\X\;\big|\;\boldsymbol\varphi(x)\cap U\neq\varnothing\big\}
\in\Sigma,\quad\text{ for every }U\subset\Y\text{ open.}
\]

Let us now introduce the notion of a measurable Banach bundle
over a measurable space:
\begin{definition}[Banach bundle]\label{def:MBB}
Let \((\X,\Sigma)\) be a measurable space and \(\B\)
a Banach space. Then a given multivalued map \({\mathbf E}\colon\X
\twoheadrightarrow\B\) is said to be a \textbf{measurable Banach
\(\B\)-bundle} (or just a \textbf{Banach \(\B\)-bundle}) over
\((\X,\Sigma)\) provided it is weakly measurable and
\({\mathbf E}(x)\) is a closed, linear subspace of \(\B\)
for every \(x\in\X\).
\end{definition}
In \cite[Definition 4.1]{DiMarinoLucicPasqualetto21} the definition of Banach
\(\B\)-bundle was formulated only for \(\B\) separable. Here, we allow for an
arbitrary Banach space \(\B\), so that also the case of Lebesgue--Bochner spaces
is covered by this theory. In this regard, observe that the multivalued map
associating to every \(x\in\X\) the whole space \(\B\) is, trivially, a Banach
\(\B\)-bundle over \((\X,\Sigma)\); with a slight abuse of notation, we denote
it by \(\B\). A consistency check will be expressed by \eqref{eq:LB_vs_bundle} below.
\medskip

Let us remind the concept of a measurable section of a given measurable Banach bundle:
\begin{definition}[Section of a Banach bundle]
Let \((\X,\Sigma)\) be a measurable space. Let \(\B\) be a Banach
space and \(\mathbf E\) a Banach \(\B\)-bundle over \((\X,\Sigma)\).
Then a map \(v\colon\X\to\B\) is said to be a
\textbf{measurable section} of \(\mathbf E\) provided
it is strongly measurable and \(v(x)\in{\mathbf E}(x)\) for all \(x\in\X\).
\end{definition}

Given a \(\sigma\)-finite measure space \((\X,\Sigma,\mu)\), a Banach space \(\B\),
a Banach \(\B\)-bundle \(\mathbf E\) over \((\X,\Sigma)\), and an
exponent \(p\in[1,\infty]\), we define the space of
\textbf{\(p\)-integrable sections} of \(\mathbf E\) as
\[
\bar\Gamma_p(\mu;{\mathbf E})\coloneqq
\big\{v\in\mathcal L^p(\mu;\B)\;\big|\;
v\text{ is a measurable section of }{\mathbf E}\big\}.
\]
Calling \(\pi_\mu\colon\mathcal L^p(\mu;\B)\to L^p(\mu;\B)\)
the quotient map, the space of \textbf{\(L^p\)-sections}
of \(\mathbf E\) is given by
\[
\Gamma_p(\mu;{\mathbf E})\coloneqq
\pi_\mu\big(\bar\Gamma_p(\mu;{\mathbf E})\big).
\]
It holds that \(\Gamma_p(\mu;{\mathbf E})\) is a closed,
linear subspace of \(L^p(\mu;\B)\). Also, the very definitions yield
\begin{equation}\label{eq:LB_vs_bundle}
\bar\Gamma_p(\mu;\B)=\mathcal L^p(\mu;\B),
\qquad\Gamma_p(\mu;\B)=L^p(\mu;\B).
\end{equation}
\begin{remark}\label{rmk:sections_NMod}{\rm
It is easy to check that the space \(\Gamma_p(\mu;{\mathbf E})\) is actually
an \textbf{\(L^p(\mu)\)-normed \(L^\infty(\mu)\)-module}, in the
sense of Gigli (see \cite[Definition 1.2.10]{Gigli18} for the
relevant definition).

As a matter of fact, the theory of normed modules (or, more specifically,
of separable normed modules) was one of the main motivations behind the
introduction of measurable Banach bundles in the sense of Definition \ref{def:MBB}.
Separable normed modules play a key role in the setting of
differential calculus over metric measure spaces, since under mild assumptions
the \emph{cotangent module} is separable (see \cite[Proposition 2.2.5]{Gigli18} and
\cite[Theorem B.1]{DiMarinoLucicPasqualetto21}).

When \(\B\) is separable, Banach \(\B\)-bundles are completely characterised by their
sections. One of the main achievements of \cite{DiMarinoLucicPasqualetto21} is
that \emph{every} separable \(L^p(\mu)\)-normed \(L^\infty(\mu)\)-module (over a
\(\sigma\)-finite measure space) can be written as the space of \(L^p\)-sections of some
Banach \(\B\)-bundle, where \(\B\) is a fixed \emph{universal} separable Banach space,
\emph{i.e.}, a separable Banach space where every separable Banach space can be embedded
linearly and isometrically. We recall that the existence of universal separable Banach
spaces follows from the \emph{Banach--Mazur Theorem} (cf.\ with \cite[Proposition 7.5 of Chapter II]{BessagaPelczynski75}), which states that \(C([0,1])\) enjoys this property.

Conversely, in the case where \(\B\) is non-separable, we do not know if Banach
\(\B\)-bundles are completely determined by their sections, the reason being that we
do not have a measurable selection theorem for an arbitrary target \(\B\).
In other words, there might possibly exist two different Banach \(\B\)-bundles
\(\mathbf E\) and \(\mathbf F\) such that \(\Gamma_p(\mu;{\mathbf E})\)
and \(\Gamma_p(\mu;{\mathbf F})\) are isomorphic.
\fr}\end{remark}
\subsection{Radon--Nikod\'{y}m property}\label{ss:RNP}
Aim of this section is to recall the basics of the theory of vector-valued measures,
as well as the strictly related concept of a Banach space having the Radon--Nikod\'{y}m
property, often abbreviated as RNP in the literature. In short, a Banach space \(\B\) has
the RNP provided every \(\B\)-valued measure of bounded variation verifies the classical
Radon--Nikod\'{y}m Theorem. For a thorough account of the Radon--Nikod\'{y}m property,
we refer to \cite{DiestelUhl77,BenyaminiLindenstrauss00} and the references
therein. Our brief presentation, which will cover only those few notions that are strictly
needed for our purposes, is mostly taken from \cite{HNVW16}.
\medskip

Let \((\X,\Sigma)\) be a measurable space and \(\B\) a Banach
space. A mapping \(\Omega\colon\Sigma\to\B\) is said to be a
\textbf{\(\B\)-valued measure} on \((\X,\Sigma)\) provided
the following property is satisfied:
\[
\lim_{N\to\infty}\bigg\|\Omega(E)-\sum_{n=1}^N\Omega(E_n)\bigg\|_\B=0,
\quad\text{ whenever }(E_n)_{n\in\N}\subset\Sigma
\text{ is a partition of }E\in\Sigma.
\]
We say that \(\Omega\) is \textbf{bounded} (or \textbf{of bounded semivariation}) provided its range is a bounded subset
of \(\B\) (cf.\ with \cite[Proposition I.1.11]{DiestelUhl77}). Following \cite[Definition I.1.12]{DiestelUhl77}, it is
possible to integrate bounded scalar-valued functions with respect to a bounded vector-valued measure:
\begin{definition}[Elementary Bartle integral]\label{def:Bartle_int}
Let \((\X,\Sigma)\) be a measurable space and \(\B\) a Banach space. Let \(\Omega\) be a bounded
\(\B\)-valued measure on \((\X,\Sigma)\). Then we define the map
\[
\mathcal L^\infty(\Sigma)\ni f\mapsto\int f\,\d\Omega\in\B,
\]
called the \textbf{elementary Bartle integral}, as the unique linear continuous operator such that
\begin{equation}\label{eq:def_Bartle_int}
\int f\,\d\Omega=\sum_{i=1}^n\lambda_i\,\Omega(E_i),\quad\text{ for every }f=\sum_{i=1}^n\lambda_i\1_{E_i}\in\mathcal S(\Sigma).
\end{equation}
Moreover, given any \(f\in\mathcal L^\infty(\Sigma)\) and \(E\in\Sigma\), we define \(\int_E f\,\d\Omega\coloneqq\int \1_E f\,\d\Omega\in\B\).
\end{definition}

Observe that, indeed, the operator \(\mathcal S(\Sigma)\ni f\mapsto\int f\,\d\Omega\in\B\) defined in \eqref{eq:def_Bartle_int} verifies
\[\begin{split}
\bigg|\omega\bigg(\int f\,\d\Omega\bigg)\bigg|&=\bigg|\sum_{i=1}^n\lambda_i\,\omega\big(\Omega(E_i)\big)\bigg|
\leq\sum_{i=1}^n|\lambda_i|\big|\omega\big(\Omega(E_i)\big)\big|\\
&\leq\|f\|_{\mathcal L^\infty(\Sigma)}\bigg(\sum_{i\in P}\omega\big(\Omega(E_i)\big)-\sum_{i\in N}\omega\big(\Omega(E_i)\big)\bigg)\\
&=\|f\|_{\mathcal L^\infty(\Sigma)}\Big(\omega\big(\Omega(E^+)\big)-\omega\big(\Omega(E^-)\big)\Big)\\
&\leq\|f\|_{\mathcal L^\infty(\Sigma)}\Big(\|\Omega(E^+)\|_\B+\|\Omega(E^-)\|_\B\Big)\\
&\leq 2\,\|f\|_{\mathcal L^\infty(\Sigma)}\sup_{E\in\Sigma}\|\Omega(E)\|_\B,
\end{split}\]
for every \(\omega\in\B'\) with \(\|\omega\|_{\B'}\leq 1\) and \(f=\sum_{i=1}^n\lambda_i\1_{E_i}\in\mathcal S(\Sigma)\), where we set
\[
P\coloneqq\Big\{i=1,\ldots,n\;\Big|\;\omega\big(\Omega(E_i)\big)\geq 0\Big\},\quad N\coloneqq\{1,\ldots,n\}\setminus P,
\quad E^+\coloneqq\bigcup_{i\in P}E_i,\quad E^-\coloneqq\bigcup_{i\in N}E_i.
\]
By passing to the supremum over all \(\omega\in\B'\) with \(\|\omega\|_{\B'}\leq 1\), we thus obtain that
\begin{equation}\label{eq:ineq_Bartle}
\bigg\|\int f\,\d\Omega\bigg\|_\B\leq 2\,\|f\|_{\mathcal L^\infty(\Sigma)}\sup_{E\in\Sigma}\|\Omega(E)\|_\B, \quad\text{ for every }f\in\mathcal S(\Sigma).
\end{equation}
Moreover, standard verifications show that the operator \(\mathcal S(\Sigma)\ni f\mapsto\int f\,\d\Omega\in\B\) is linear.
Therefore, Definition \ref{def:Bartle_int} is well-posed and the inequality \eqref{eq:ineq_Bartle} holds for all \(f\in\mathcal L^\infty(\Sigma)\).
\medskip

The \textbf{variation} \(\|\Omega\|_\B\colon\Sigma\to[0,+\infty]\) of a \(\B\)-valued measure \(\Omega\) is defined at \(E\in\Sigma\) as
\begin{equation}\label{eq:def_tv_B-val}
\|\Omega\|_\B(E)\coloneqq
\sup\bigg\{\sum_{i=1}^n\|\Omega(E_i)\|_\B\;\bigg|\;n\in\N,\,(E_i)_{i=1}^n\subset\Sigma\text{ partition of }E\bigg\}.
\end{equation}
We say that \(\Omega\) has \textbf{bounded variation} provided \(\|\Omega\|_\B(\X)<+\infty\). In this case, it holds that
the variation \(\|\Omega\|_\B\) is a finite measure on \((\X,\Sigma)\) and that \(\Omega\) is bounded. Observe that any
(non-negative) finite measure \(\mu\) on \((\X,\Sigma)\) has bounded variation and satisfies \(\|\mu\|_\R=\mu\).
\begin{remark}\label{rmk:char_variation}{\rm
If \(\Omega\colon\Sigma\to\B\) has bounded variation, its variation \(\|\Omega\|_\B\) can be characterised as the least
measure \(\mu\colon\Sigma\to[0,+\infty)\) such that \(\|\Omega(E)\|_\B\leq\mu(E)\) for every \(E\in\Sigma\).
\fr}\end{remark}
%
We say that \(\Omega\colon\Sigma\to\B\) is \textbf{absolutely continuous} with respect to \(\mu\), briefly \(\Omega\ll\mu\), if
\[
\Omega(N)=0_\B,\quad\text{ for every }N\in\Sigma\text{ with }\mu(N)=0.
\]
It holds that \(\Omega\ll\mu\) if and only if \(\|\Omega\|_\B\ll\mu\).
\medskip

The set of \(\B\)-valued measures having bounded variation is closed under various operations:
\begin{itemize}
\item[\(\rm i)\)] \textsc{Restriction.} Let \((\X,\Sigma)\) be a measurable space, \(\B\) a Banach space, and \(\Omega\) a bounded
\(\B\)-valued measure on \((\X,\Sigma)\). Then for any given \(f\in\mathcal L^\infty(\Sigma)\) we define \(f\Omega\colon\Sigma\to\X\) as
\[
(f\Omega)(E)\coloneqq\int_E f\,\d\Omega,\quad\text{ for every }E\in\Sigma.
\]
It holds that \(f\Omega\) is a bounded \(\B\)-valued measure on \((\X,\Sigma)\). Moreover, if \(\Omega\) has bounded variation,
then \(f\Omega\) has bounded variation as well and it holds that
\begin{equation}\label{eq:tv_restr}
\|f\Omega\|_\B=|f|\|\Omega\|_\B,\quad\text{ for every }f\in\mathcal L^\infty(\Sigma).
\end{equation}
In particular, for any set \(F\in\Sigma\) we can consider the \textbf{restriction} of \(\Omega\) to \(F\), which is given by
\(\Omega|_F\coloneqq\1_F\Omega\) and satisfies \(\|\Omega|_F\|_\B=\|\Omega\|_\B|_F\) when \(\Omega\) has bounded variation.
\item[\(\rm ii)\)] \textsc{Pushforward.} Let \((\X,\Sigma_\X)\), \((\Y,\Sigma_\Y)\) be measurable spaces, \(\B\) a Banach space,
\(\Omega\) a \(\B\)-valued measure on \((\X,\Sigma_\X)\), and \(\varphi\colon\X\to\Y\) a measurable map. Then we define the
\textbf{pushforward} of \(\Omega\) under \(\varphi\) as
\[
\varphi_\#\Omega(E)\coloneqq\Omega\big(\varphi^{-1}(E)\big)\in\B,
\quad\text{ for every }E\in\Sigma_\Y.
\]
It holds that \(\varphi_\#\Omega\colon\Sigma_\Y\to\B\) is a
\(\B\)-valued measure on \((\Y,\Sigma_\Y)\). Moreover, if
\(\Omega\) has bounded variation, then \(\varphi_\#\Omega\)
has bounded variation as well and it holds that
\begin{equation}\label{eq:tv_pushfrwd}
\|\varphi_\#\Omega\|_\B\leq\varphi_\#\|\Omega\|_\B.
\end{equation}
\end{itemize}

In Section \ref{s:disint} we will focus our attention on the following special class of Banach spaces:
\begin{definition}[Radon--Nikod\'{y}m property]
Let \(\B\) be a Banach space. Let \((\X,\Sigma,\mu)\) be a
\(\sigma\)-finite measure space. Then we say that \(\B\) has the
\textbf{Radon--Nikod\'{y}m property with respect to \((\X,\Sigma,\mu)\)}
if the following property holds: given any \(\B\)-valued measure
\(\Omega\) on \((\X,\Sigma)\) having bounded variation and satisfying
\(\Omega\ll\mu\), there exists \(\frac{\d\Omega}{\d\mu}
\in L^1(\mu;\B)\) such that
\begin{equation}\label{eq:def_RN_deriv}
\Omega(E)=\int_E\frac{\d\Omega}{\d\mu}\,\d\mu,
\quad\text{ for every }E\in\Sigma.
\end{equation}
The element \(\frac{\d\Omega}{\d\mu}\), which is uniquely determined
by \eqref{eq:def_RN_deriv}, is called the
\textbf{Radon--Nikod\'{y}m derivative} of \(\Omega\)
with respect to \(\mu\). Moreover, we say that \(\B\)
has the \textbf{Radon--Nikod\'{y}m property} if it has the
Radon--Nikod\'{y}m property with respect to every \(\sigma\)-finite
measure space.
\end{definition}

A Banach space has the Radon--Nikod\'{y}m property if and only if it has the Radon--Nikod\'{y}m property with respect to
\(\big([0,1],\mathscr B([0,1]),\mathcal L^1|_{[0,1]}\big)\), where \(\mathscr B([0,1])\) is the Borel \(\sigma\)-algebra of
the interval \([0,1]\) and \(\mathcal L^1\) is the Lebesgue measure on \(\R\); cf.\ with \cite[Theorem 1.3.26]{HNVW16}.
\begin{remark}\label{rmk:RNP_no_copy_c_0}{\rm
The Radon--Nikod\'{y}m property is \textbf{separably determined}, in the sense that a Banach space \(\B\) has the RNP if and only if every
closed, separable linear subspace of \(\B\) has the RNP. We refer, \emph{e.g.}, to \cite[Theorem 1.3.18]{HNVW16} for a proof of this fact.
We also recall that the Banach space \(c_0\) of all sequences \((\alpha_n)_{n\in\N}\subset\R\) converging to \(0\), endowed with the supremum
norm \(\big\|(\alpha_n)_{n\in\N}\big\|_{c_0}\coloneqq\sup_{n\in\N}|\alpha_n|\), does not have the RNP (according to \cite[Example III.1.1]{DiestelUhl77}).
Therefore, no linear subspace of a Banach space \(\B\) having the RNP is isometrically isomorphic to \(c_0\). In particular, given any
compact, Hausdorff topological space \((\X,\tau)\), we know from \cite[Theorem VI.2.15]{DiestelUhl77} that every linear continuous operator
\(T\colon C(\X)\to\B\) is \textbf{weakly compact}, meaning that the weak closure of the image under \(T\) of the unit sphere of \(C(\X)\)
is a weakly compact set. This fact will play a role during the proof of Theorem \ref{thm:disint_vect_meas}.
\fr}\end{remark}
\subsection{Ultrafilters and ultralimits}\label{ss:ultralimits}
In the sequel, it will be often convenient to work with filter convergence and
ultralimits, which we are going to remind for the reader's usefulness.
The material we will present is very standard, see for instance the book \cite{Bourbaki65}.
\begin{definition}[Filter and ultrafilter]\label{def:filter}
Let \(\Lambda\) be a non-empty set. Then a family \(\mathcal F\subset 2^\Lambda\)
of subsets of \(\Lambda\) is said to be a \textbf{filter} on \(\Lambda\)
provided the following properties are satisfied:
\begin{itemize}
\item[\(\rm i)\)] \(\varnothing\notin\mathcal F\).
\item[\(\rm ii)\)] If \(S,T\in\mathcal F\), then \(S\cap T\in\mathcal F\).
\item[\(\rm iii)\)] If \(S\in\mathcal F\) and \(T\in 2^\Lambda\) satisfy
\(S\subset T\), then \(T\in\mathcal F\).
\end{itemize}
A filter \(\omega\) on \(\Lambda\) which is maximal with respect to inclusion
is said to be an \textbf{ultrafilter} on \(\Lambda\).
\end{definition}

It holds that a given filter \(\mathcal F\) on \(\Lambda\) is an ultrafilter if
and only if for any \(S\in 2^\Lambda\) one has that either \(S\in\mathcal F\)
or \(\Lambda\setminus S\in\mathcal F\). An ultrafilter \(\omega\) on \(\Lambda\)
is said to be \textbf{principal} if it admits a least element,
\textbf{non-principal} if not. In the case where \(\omega\) is principal,
its least element is uniquely determined and is a singleton. When the set
\(\Lambda\) is finite, every ultrafilter on \(\Lambda\) is principal.
\medskip

An important example of filter is the following one: given a
topological space \((\Y,\tau)\) and a point \(y\in\Y\), the
family of all (not necessarily open) neighbourhoods of \(y\) is a filter on \(\Y\).
Conversely, \(\mathcal N_\Y(y)\) is typically not a filter,
as there might well be neighbourhoods of \(y\) which are not open.
In this paper, we mostly work with the filter induced by a
differentiation basis:
\begin{definition}[Filter induced by a differentiation basis]
\label{def:filter_ind_by_diff_basis}
Let \((\X,\Sigma,\mu)\) be a measure space and let \(\mathcal I\)
be a differentiation basis on \((\X,\Sigma,\mu)\). Let us define
\(\tilde{\mathcal I}\coloneqq\{\X\setminus I\,:\,I\in\mathcal I\}\) and
\[\begin{split}
\tilde{\mathcal I}_x\coloneqq\big\{\X\setminus I\;\big|\;
I\in\mathcal I_x\big\}&,\quad\text{ for every }x\in\hat\X,\\
\tilde{\mathcal U}_x^{\mathcal I}(I)\coloneqq\big\{\X\setminus J
\;\big|\;J\in\mathcal U_x^{\mathcal I}(I)\big\}&,\quad
\text{ for every }x\in\hat\X\text{ and }I\in\mathcal I_x.
\end{split}\]
Then for any regular point \(x\in\hat\X\) we define the filter
\(\mathcal F_x^{\mathcal I}\) on \(\tilde{\mathcal I}_x\) as
\[
\mathcal F_x^{\mathcal I}\coloneqq\big\{S\subset\tilde{\mathcal I}_x
\;\big|\;\tilde{\mathcal U}_x^{\mathcal I}(I)\subset S,
\text{ for some }I\in\mathcal I_x\big\}.
\]
We say that \(\mathcal F_x^{\mathcal I}\) is the
\textbf{filter at \(x\) induced by \(\mathcal I\)}.
In the case where \(\mathcal I=\mathcal I^\ell\) for
some von Neumann lifting \(\ell\) of \(\mu\), we write
\(\mathcal U_x^\ell(I)\) and \(\tilde{\mathcal U}_x^\ell(I)\)
in place of \(\mathcal U_x^{\mathcal I^\ell}(I)\) and
\(\tilde{\mathcal U}_x^{\mathcal I^\ell}(I)\), respectively.
\end{definition}
\medskip

Filters on a topological space may be regarded as `locating schemes',
and as such they naturally come with the following notion of convergence:
\begin{definition}[Convergence of filters]
Let \((\Y,\tau)\) be a Hausdorff topological space
and \(\mathcal F\) a filter on \(\Y\).
Let \(y\in\Y\) be given. Then we say that \textbf{\(\mathcal F\)
converges to \(y\)}, briefly \(\mathcal F\to y\), provided it
holds that \(\mathcal N_\Y(y)\subset\mathcal F\).
\end{definition}

Given two non-empty sets \(\Lambda\) and \(\Xi\), a map
\(\phi\colon\Lambda\to\Xi\), and a filter \(\mathcal F\)
on \(\Lambda\), we define
\[
\phi_*\mathcal F\coloneqq\big\{T\in 2^\Xi\;\big|\;
\phi(S)\subset T,\text{ for some }S\in\mathcal F\big\}.
\]
The family \(\phi_*\mathcal F\), which is a filter on \(\Xi\), is
called the \textbf{pushforward filter of \(\mathcal F\) under \(\phi\)}.
\medskip

The following result shows that \(\mathcal I\)-limits can be expressed
in terms of filter convergence.
\begin{lemma}\label{lem:equiv_I-limit}
Let \((\X,\Sigma,\mu)\) be a measure space and \(\mathcal I\) a
differentiation basis on \((\X,\Sigma,\mu)\). Let \((\Y,\tau)\) be
a Hausdorff topological space and \(\Phi\colon\mathcal I\to\Y\) a map.
Fix \(x\in\hat\X\) and \(y\in\Y\). Then
\[
\lim_{I\Rightarrow x}\Phi(I)=y\quad\Longleftrightarrow\quad
\tilde\Phi_*\mathcal F_x^{\mathcal I}\to y,
\]
where the map \(\tilde\Phi\colon\tilde{\mathcal I}\to\Y\)
is defined as \(\tilde\Phi(\X\setminus I)\coloneqq\Phi(I)\)
for every \(I\in\mathcal I\).
\end{lemma}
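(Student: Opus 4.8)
The plan is to unwind both sides of the claimed equivalence until they reduce to the very same combinatorial assertion, namely that for every $U\in\mathcal N_\Y(y)$ there is $I\in\mathcal I_x$ with $\Phi(J)\in U$ for all $J\in\mathcal U^{\mathcal I}_x(I)$. To set this up cleanly, I would first record that the filter $\mathcal F_x^{\mathcal I}$ is exactly the upward closure of the filter base $\{\tilde{\mathcal U}^{\mathcal I}_x(I)\;|\;I\in\mathcal I_x\}$: by its very definition, a set $S\subset\tilde{\mathcal I}_x$ lies in $\mathcal F_x^{\mathcal I}$ if and only if $\tilde{\mathcal U}^{\mathcal I}_x(I)\subset S$ for some $I\in\mathcal I_x$. (The fact that this is a genuine filter, in particular that $\tilde{\mathcal U}^{\mathcal I}_x(I)\neq\varnothing$ since $I\in\mathcal U^{\mathcal I}_x(I)$, is already part of the preliminaries and may be taken for granted.)

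The key computation is then to describe the pushforward $\tilde\Phi_*\mathcal F_x^{\mathcal I}$ through this base. By definition of $\tilde\Phi$ and of $\tilde{\mathcal U}^{\mathcal I}_x(I)$ one has the pointwise identity
\[
\tilde\Phi\big(\tilde{\mathcal U}^{\mathcal I}_x(I)\big)=\big\{\Phi(J)\;\big|\;J\in\mathcal U^{\mathcal I}_x(I)\big\},\quad\text{ for every }I\in\mathcal I_x.
\]
Using that $\mathcal F_x^{\mathcal I}$ is generated by the base above, a set $U\subset\Y$ belongs to $\tilde\Phi_*\mathcal F_x^{\mathcal I}$ if and only if $\tilde\Phi(S)\subset U$ for some $S\in\mathcal F_x^{\mathcal I}$, which (shrinking $S$ to a base element it contains, and enlarging in the converse direction) is equivalent to $\tilde\Phi\big(\tilde{\mathcal U}^{\mathcal I}_x(I)\big)\subset U$ for some $I\in\mathcal I_x$. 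Combining this with the displayed identity, $U\in\tilde\Phi_*\mathcal F_x^{\mathcal I}$ holds precisely when there exists $I\in\mathcal I_x$ such that $\Phi(J)\in U$ for every $J\in\mathcal U^{\mathcal I}_x(I)$.

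With this characterisation in hand, both implications are immediate by quantifying over $U$. Indeed, $\tilde\Phi_*\mathcal F_x^{\mathcal I}\to y$ means $\mathcal N_\Y(y)\subset\tilde\Phi_*\mathcal F_x^{\mathcal I}$, i.e.\ every $U\in\mathcal N_\Y(y)$ satisfies the membership condition just described, which is verbatim the defining property of $\lim_{I\Rightarrow x}\Phi(I)=y$; conversely, the $\mathcal I$-limit condition produces, for each $U\in\mathcal N_\Y(y)$, exactly the set $S=\tilde{\mathcal U}^{\mathcal I}_x(I)\in\mathcal F_x^{\mathcal I}$ witnessing $U\in\tilde\Phi_*\mathcal F_x^{\mathcal I}$. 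I do not expect any genuine obstacle here: the whole content is a translation between two notations for the same neighbourhood-type condition. The only point requiring a little care is the base reduction for the pushforward filter, namely that it suffices to test the condition $\tilde\Phi(S)\subset U$ on the generating sets $\tilde{\mathcal U}^{\mathcal I}_x(I)$ rather than on all $S\in\mathcal F_x^{\mathcal I}$; this is justified by the upward closure recorded in the first step.
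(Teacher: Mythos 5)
Your proof is correct and follows essentially the same route as the paper's: both arguments rest on the observation that $\tilde\Phi\big(\tilde{\mathcal U}^{\mathcal I}_x(I)\big)=\Phi\big(\mathcal U^{\mathcal I}_x(I)\big)$, so that testing neighbourhoods of $y$ against the base $\{\tilde{\mathcal U}^{\mathcal I}_x(I)\}_{I\in\mathcal I_x}$ of $\mathcal F_x^{\mathcal I}$ is verbatim the $\mathcal I$-limit condition. Your write-up merely makes explicit the base reduction for the pushforward filter, which the paper leaves implicit.
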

\begin{proof}
The statement follows from the observation that, given any
\(U\in\mathcal N_\Y(y)\) and \(I\in\mathcal I_x\), it holds that
\(\Phi(J)\in U\) for every \(J\in\mathcal U_x^{\mathcal I}(I)\) if and
only if \(\tilde\Phi\big(\tilde{\mathcal U}_x^{\mathcal I}(I)\big)
=\Phi\big(\mathcal U_x^{\mathcal I}(I)\big)\subset U\).
\end{proof}
\begin{definition}[Ultralimit]
Let \(\Lambda\neq\varnothing\) be a set and \(\omega\) an ultrafilter on
\(\Lambda\). Let \((\Y,\tau)\) be a Hausdorff topological space and
\(\Phi\colon\Lambda\to\Y\) a given map. Then we declare that 
the \textbf{ultralimit} of the map \(\Phi\) with respect to \(\omega\)
exists and is equal to \(y\in\Y\), briefly
\[
\omega\text{-}\lim_i\Phi(i)=y,
\]
provided for any \(U\in\mathcal N_\Y(y)\) it holds that
\(\big\{i\in\Lambda\,:\,\Phi(i)\in U\big\}\in\omega\).
\end{definition}
\begin{remark}\label{rmk:equiv_ultralim}{\rm
It is worth pointing out that the notion of ultralimit can
be reformulated in terms of filter convergence.
Namely, it holds \(\omega\text{-}\lim_i\Phi(i)=y\) if and only if \(\Phi_*\omega\to y\).
\fr}\end{remark}

Let us recall a few basic calculus rules concerning ultralimits:
\begin{itemize}
\item[\(\rm i)\)] When the ultralimit \(\omega\text{-}\lim_i\Phi(i)\) exists,
it is uniquely determined (by the Hausdorff assumption on \(\Y\)).
If \(\omega\) is principal and its least element is given by \(\{i_0\}\), then
\[
\exists\,\omega\text{-}\lim_i\Phi(i)=\Phi(i_0).
\]
Conversely, if \(\omega\) is non-principal, then the ultralimit
\(\omega\text{-}\lim_i\Phi(i)\) might not exist. A sufficient
condition for its existence is given by the compactness of \((\Y,\tau)\).
\item[\(\rm ii)\)] Let \(f\colon\Y\to\R\) be a lower semicontinuous
function. Suppose that both \(\omega\text{-}\lim_i\Phi(i)\) and
\(\omega\text{-}\lim_i f\big(\Phi(i)\big)\) exist. Then it holds that
\begin{equation}\label{eq:ultra_lsc}
f\Big(\omega\text{-}\lim_i\Phi(i)\Big)\leq
\omega\text{-}\lim_i f\big(\Phi(i)\big).
\end{equation}
\item[\(\rm iii)\)] Suppose \((\Y,\sfd)\) is a separated topological
vector space. Let \(\Phi,\Psi\colon\Lambda\to\Y\) be two given maps
and \(\lambda\in\R\). Suppose that \(\omega\text{-}\lim_i\Phi(i)\)
and \(\omega\text{-}\lim_i\Psi(i)\) exist. Then it holds that
\begin{subequations}\begin{align}
\label{eq:lin_ultralim_1}
\exists\,\omega\text{-}\lim_i\big(\Phi(i)+\Psi(i)\big)&=
\Big(\omega\text{-}\lim_i\Phi(i)\Big)+\Big(\omega\text{-}\lim_i\Psi(i)\Big),\\
\label{eq:lin_ultralim_2}
\exists\,\omega\text{-}\lim_i\big(\lambda\,\Phi(i)\big)&=
\lambda\Big(\omega\text{-}\lim_i\Phi(i)\Big).
\end{align}\end{subequations}
\end{itemize}
The Ultrafilter Lemma -- which we will enunciate in the next result -- is particularly
relevant for us. In ZF, the Ultrafilter Lemma is strictly weaker than the Axiom of Choice,
but strictly stronger than the Hahn--Banach Extension Theorem.
\begin{theorem}[Ultrafilter Lemma]\label{thm:ultrafilter_lemma}
Let \(\Lambda\neq\varnothing\) be a set and \(\mathcal F\) a filter
on \(\Lambda\). Then there exists an ultrafilter \(\omega\) on \(\Lambda\)
containing \(\mathcal F\).
\end{theorem}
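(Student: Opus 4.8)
The plan is to deduce the statement from Zorn's Lemma, applied to the collection of all filters refining \(\mathcal F\). Concretely, I would introduce the partially ordered set
\[
\mathbb P\coloneqq\big\{\mathcal G\subset 2^\Lambda\;\big|\;\mathcal G\text{ is a filter on }\Lambda\text{ with }\mathcal F\subset\mathcal G\big\},
\]
ordered by set-theoretic inclusion. It is non-empty, since \(\mathcal F\in\mathbb P\). Recalling from Definition \ref{def:filter} that an ultrafilter is precisely a filter which is maximal with respect to inclusion, the theorem reduces to producing a maximal element of \(\mathbb P\): indeed, any element that is maximal \emph{within} \(\mathbb P\) is automatically maximal among \emph{all} filters on \(\Lambda\), because any filter containing it also contains \(\mathcal F\) and hence already belongs to \(\mathbb P\).

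The heart of the argument is the verification of the chain condition required by Zorn's Lemma. Given a non-empty chain \(\mathcal C\subset\mathbb P\), I would set \(\mathcal G\coloneqq\bigcup_{\mathcal H\in\mathcal C}\mathcal H\) and check the three filter axioms of Definition \ref{def:filter}. Axiom i) holds because \(\varnothing\notin\mathcal H\) for every \(\mathcal H\in\mathcal C\). Axiom iii) (upward closure) is immediate: if \(S\in\mathcal G\) and \(S\subset T\), then \(S\in\mathcal H\) for some \(\mathcal H\in\mathcal C\), whence \(T\in\mathcal H\subset\mathcal G\). The one step that genuinely exploits the linear ordering of \(\mathcal C\) is axiom ii): given \(S,T\in\mathcal G\), choose \(\mathcal H_1,\mathcal H_2\in\mathcal C\) with \(S\in\mathcal H_1\) and \(T\in\mathcal H_2\); since \(\mathcal C\) is a chain we may assume \(\mathcal H_1\subset\mathcal H_2\), so that \(S,T\in\mathcal H_2\) and therefore \(S\cap T\in\mathcal H_2\subset\mathcal G\). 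As every member of \(\mathcal C\) contains \(\mathcal F\), so does \(\mathcal G\); thus \(\mathcal G\in\mathbb P\) is an upper bound for \(\mathcal C\). (The empty chain is bounded below by \(\mathcal F\) itself, so it causes no trouble.)

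With the chain condition established, Zorn's Lemma furnishes a maximal element \(\omega\in\mathbb P\), which by the observation in the first paragraph is a maximal filter on \(\Lambda\), i.e.\ an ultrafilter, and which contains \(\mathcal F\) by construction. This completes the proof.

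I do not foresee a serious obstacle here, since the argument is entirely standard; the only two points worth keeping in mind are that maximality within \(\mathbb P\) already forces the full ultrafilter property, and that the chain hypothesis is exactly what keeps the union closed under finite intersections. As an alternative to the Zorn-based approach, one could instead invoke the characterisation recorded immediately after Definition \ref{def:filter} — namely that a filter is an ultrafilter if and only if it decides every subset of \(\Lambda\) — and argue by maximality through that criterion, but the formulation above is the most direct.
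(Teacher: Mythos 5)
Your proof is correct: the Zorn's Lemma argument is complete, the chain-union verification is carefully done (in particular, you correctly isolate the finite-intersection axiom as the one step that uses the linear ordering), and the observation that maximality within \(\mathbb P\) already yields maximality among all filters is exactly the point one needs. For comparison: the paper does not prove this statement at all — it records the Ultrafilter Lemma as a known set-theoretic principle, remarking only that in ZF it is strictly weaker than the Axiom of Choice but strictly stronger than the Hahn--Banach Extension Theorem. That remark points to the one (minor) mismatch between your argument and the paper's framing: your proof invokes Zorn's Lemma, which is equivalent to full AC, so it establishes the result from a strictly stronger hypothesis than the principle itself requires. This is entirely standard and perfectly acceptable as a proof, but it does mean your derivation cannot witness the independence claim the paper makes; a proof of the Ultrafilter Lemma from something genuinely weaker than AC (e.g.\ via the Boolean Prime Ideal Theorem or a compactness argument) would be a different and more delicate matter, and is not what either you or the paper attempts.
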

\section{Metric-valued Lebesgue differentiation theorem in measure spaces}\label{s:LDT}
In this section, we prove a version of Lebesgue Differentiation Theorem for metric-valued
maps defined on a measure space (Theorem \ref{thm:Lebesgue_diff}), as well as some
of its direct consequences.
\medskip

We first introduce some relevant terminology.
Let \((\X,\Sigma,\mu)\) be a \(\sigma\)-finite measure space and \((\Y,\tau)\) a Hausdorff topological
space. Then we say that a measurable map \(\varphi\colon\X\to\Y\) is
\textbf{essentially separably valued} if there exists \(N\in\Sigma\) such that
\(\mu(N)=0\) and \(\varphi(\X\setminus N)\) is a separable subset of \(\Y\).
\begin{remark}{\rm
Given any Banach space \(\B\), it is well-known that a
\(\mu\)-measurable map \(v\colon\X\to\B\) is essentially separably valued
if and only if it is strongly measurable.
\fr}\end{remark}
Theorem \ref{thm:Lebesgue_diff} will easily follow from the ensuing key result,
which states that every essentially separably valued, measurable map from a complete
measure space to a metric space is `approximately continuous' at almost every point
with respect to the differentiation basis induced by a lifting. In fact, this notion
of approximate continuity can be expressed in terms of a suitable topology, called the
\emph{density topology}. Since this characterisation is interesting but not
strictly needed for our purposes, we postpone the relative discussion to Appendix
\ref{app:approx_cont}.
\begin{lemma}\label{lem:pre_Leb}
Let \((\X,\Sigma,\mu)\) be a complete, \(\sigma\)-finite measure space
and let \((\Y,\sfd)\) be a metric space. Let \(\ell\) be a von Neumann
lifting of \(\mu\). Let \(\varphi\colon\X\to\Y\) be an essentially
separably valued, measurable map. Then for \(\mu\)-a.e.\ point
\(x\in\X\) the following property holds:
\begin{equation}\label{eq:pre_Leb_claim}
\forall\varepsilon>0,\quad\exists\,I\in\mathcal I^\ell_x:\quad\sfd\big(\varphi(z),\varphi(x)\big)<\varepsilon,\quad\text{for }\mu\text{-a.e.\ }z\in I.
\end{equation}
\end{lemma}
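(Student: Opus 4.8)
The plan is to establish the approximate-continuity property \eqref{eq:pre_Leb_claim} by a countable-density argument combined with the real-valued Lebesgue differentiation theorem for the basis $\mathcal I^\ell$ (which, as noted in the excerpt after Lemma~\ref{lem:diff_basis_by_lift}, holds by \cite{Kolzow68}). Since $\varphi$ is essentially separably valued, I may discard a null set and assume $\varphi(\X)$ is separable; fix a countable dense set $\{y_k\}_{k\in\N}\subset\overline{\varphi(\X)}$. For each $k$ consider the real-valued function $g_k\coloneqq\sfd(\varphi(\cdot),y_k)\colon\X\to[0,+\infty)$, which is measurable because $\varphi$ is measurable and $\sfd$ is continuous.

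The key step is to apply the scalar Lebesgue differentiation theorem to each $g_k$. First I would reduce to the finite-measure case via Remark~\ref{rmk:wlog_mu_finite}, so that every $g_k$ is locally integrable with respect to $\mathcal I^\ell$ (cf.\ Remark~\ref{rmk:local_Boch}), and then invoke the real-valued statement: for each $k$ there is a $\mu$-full set $X_k$ on which
\[
\lim_{I\Rightarrow x}\fint_I\big|g_k(z)-g_k(x)\big|\,\d\mu(z)=0.
\]
Intersecting over all $k$ yields a $\mu$-full set $X_\infty\coloneqq\bigcap_k X_k$ (a countable intersection of full sets in a complete space). I would then fix an arbitrary $x\in X_\infty$ and an $\varepsilon>0$, and choose an index $k$ with $\sfd(\varphi(x),y_k)<\varepsilon/4$, which is possible by density. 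The triangle inequality gives
\[
\sfd\big(\varphi(z),\varphi(x)\big)\leq\sfd\big(\varphi(z),y_k\big)+\sfd\big(\varphi(x),y_k\big)=g_k(z)+g_k(x),
\]
so that $\sfd(\varphi(z),\varphi(x))< g_k(z)-g_k(x)+\varepsilon/2$ (using $g_k(x)<\varepsilon/4$ twice), whence $\big|\{z\in I:\sfd(\varphi(z),\varphi(x))\geq\varepsilon\}\big|$ is controlled by the set where $g_k(z)-g_k(x)\geq\varepsilon/2$.

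The conclusion \eqref{eq:pre_Leb_claim} then follows by turning the averaged convergence into an almost-everywhere pointwise bound on a suitable $I\in\mathcal I^\ell_x$. Concretely, from $\lim_{I\Rightarrow x}\fint_I|g_k(z)-g_k(x)|\,\d\mu(z)=0$ I can pick $I\in\mathcal I^\ell_x$ with $\fint_I|g_k(z)-g_k(x)|\,\d\mu(z)$ as small as desired; but to obtain a pointwise-a.e.\ statement on $I$ rather than merely a smallness-in-average statement, I would instead exploit the lower-density structure of the basis $\mathcal I^\ell$. I expect \textbf{this to be the main obstacle}: the naive averaging only bounds the measure of the bad set, not its emptiness. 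The clean route is to use that $\mathcal I^\ell$ is induced by the lifting, so that membership $x\in\ell(E)$ forces $x$ to be a density-one point of $E$ and, crucially, $\ell(E)\subset E$ up to the null set already folded into $\ell$; applying this to the superlevel set $E_{k,\varepsilon}\coloneqq\{z:g_k(z)-g_k(x)<\varepsilon/2\}$ (which has density one at $x$ by the scalar theorem) yields $I\coloneqq\ell(E_{k,\varepsilon})\in\mathcal I^\ell_x$ on which $g_k(z)-g_k(x)<\varepsilon/2$ holds for $\mu$-a.e.\ $z$, and hence $\sfd(\varphi(z),\varphi(x))<\varepsilon$ for $\mu$-a.e.\ $z\in I$, as required. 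Verifying that $x\in\ell(E_{k,\varepsilon})$ is exactly where the density-point property of $x$ (guaranteed by $x\in X_\infty$) must be matched against the definition of $\mathcal I^\ell$; I would handle it by the general principle that Lebesgue density one at $x$ implies $x\in\ell(E)$, which is the content of the lower-density construction discussed in Appendix~\ref{app:Leb_to_lift}.
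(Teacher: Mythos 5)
Your argument is workable, but it takes a genuinely different and considerably heavier route than the paper. You treat the scalar Lebesgue Differentiation Theorem for \(\mathcal I^\ell\) (K\"{o}lzow's theorem) as a black box, apply it to the countably many functions \(g_k=\sfd(\varphi(\cdot),y_k)\), and then upgrade the averaged convergence to an a.e.\ pointwise bound on a single \(I\in\mathcal I^\ell_x\) via a Chebyshev estimate and the principle that a density-one point of \(E\) lies in \(\ell(E)\). The paper does something much more elementary: it never differentiates anything. It sets \(A^k_n=\{x:\sfd(\varphi(x),y_n)<1/k\}\), observes that \(\mu\big(A^k_n\Delta\ell(A^k_n)\big)=0\) and \(\bigcup_nA^k_n\) is conull, so that a.e.\ \(x\) lies in \(A^k_n\cap\ell(A^k_n)\) for some \(n\) (for every \(k\)), and then takes \(I=I'\cap\ell(A^k_n)\); the triangle inequality through \(y_n\) finishes the proof. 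What the paper's route buys is exactly the selling point advertised in the introduction: Lemma \ref{lem:pre_Leb} and hence Theorem \ref{thm:Lebesgue_diff} (which contains the scalar LDT as the case \(\Y=\R\)) are obtained from the lifting axioms alone, with no appeal to K\"{o}lzow's result. Your route is not circular if one accepts that external theorem, but it derives the lemma from a statement that the lemma is designed to reprove more cheaply.

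Two points in your writeup need shoring up. First, the principle ``density one at \(x\) implies \(x\in\ell(E)\)'' is true for the basis \(\mathcal I^\ell\), but your justification via Appendix \ref{app:Leb_to_lift} points the wrong way: that appendix builds a lifting \emph{from} a lower density, whereas here \(\ell\) is given and you must check compatibility. The correct two-line argument is: if \(x\notin\ell(E)\) and \(\mu(\X\setminus E)>0\), then \(\ell(\X\setminus E)\in\mathcal I^\ell_x\), and every \(J\in\mathcal U^\ell_x\big(\ell(\X\setminus E)\big)\) satisfies \(J=\ell(F)\cap\ell(\X\setminus E)=\ell(F\setminus E)\), hence \(\mu(E\cap J)=0\); so the density of \(E\) at \(x\) is \(0\), not \(1\). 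Second, after reducing to finite \(\mu\) the functions \(g_k\) need not be integrable, so you should either truncate (\(g_k\wedge 1\) suffices, since only small values of \(g_k\) matter) or localise as in Remark \ref{rmk:local_Boch} before invoking the scalar theorem. Both are fixable, but neither is automatic as written.
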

\begin{proof}
Pick a set \(N'\in\Sigma\) with \(\mu(N')=0\) and a separable set
\(\Y'\subset\Y\) such that \(\varphi(\X\setminus N')\subset\Y'\).
Fix a dense sequence \((y_n)_{n\in\N}\) in \(\Y'\). Let us define
\[
A^k_n\coloneqq\big\{x\in\X\setminus N'\;\big|\;\sfd\big(\varphi(x),y_n\big)
<1/k\big\},\quad B^k_n\coloneqq\ell(A^k_n),\quad\text{ for every }n,k\in\N.
\]
Given that \(\mu(A^k_n\Delta B^k_n)=0\) for every \(n,k\in\N\) and
\(\bigcup_{n\in\N}A^k_n=\X\setminus N'\) for every \(k\in\N\), we have that
\(N\coloneqq\X\setminus\bigcap_{k\in\N}\bigcup_{n\in\N}A^k_n\cap B^k_n
\in\Sigma\) is \(\mu\)-negligible. We claim that each point \(x\) belonging
to the full \(\mu\)-measure set \(\hat\X\setminus N\) satisfies
\eqref{eq:pre_Leb_claim}. In order to prove it, fix \(x\in\hat\X\setminus N\)
and \(\varepsilon>0\). Choose any \(k\in\N\) for which \(1/k\leq\varepsilon/2\).
Hence, we can find \(n\in\N\) such that \(x\in A^k_n\cap B^k_n\). Being
\(x\in\hat\X\), there exists some \(I'\in\mathcal I^\ell_x\). Let us now define
\(I\coloneqq I'\cap B^k_n\in\mathcal I^\ell_x\). Observe that
\[
\sfd\big(\varphi(z),\varphi(x)\big)\leq\sfd\big(\varphi(z),y_n\big)+
\sfd\big(y_n,\varphi(x)\big)<\frac{1}{k}+\frac{1}{k}\leq\varepsilon,
\]
for all \(z\in A^k_n\) and thus for \(\mu\)-a.e.\ \(z\in I\).
This proves \eqref{eq:pre_Leb_claim} and accordingly the statement.
\end{proof}
\begin{remark}{\rm
Albeit not needed for our purposes, we point out that Lemma \ref{lem:pre_Leb} holds in greater generality.
More precisely, the target can be chosen to be a \emph{uniformisable}, Hausdorff topological space \((\Y,\tau)\).
In this case, the statement reads as: for \(\mu\)-a.e.\ \(x\in\X\), it holds that
\[
\forall\,\text{entourage }U\text{ of }\Y,\quad\exists\,I\in\mathcal I^\ell_x:\quad\big(\varphi(z),\varphi(x)\big)\in U,
\quad\text{for }\mu\text{-a.e.\ }z\in I,
\]
as one can check by suitably adapting the proof of Lemma \ref{lem:pre_Leb}.
\fr}\end{remark}
Having Lemma \ref{lem:pre_Leb} at disposal, it is almost immediate to obtain the following result:
\begin{theorem}[Metric-valued Lebesgue differentiation theorem]\label{thm:Lebesgue_diff}
Let \((\X,\Sigma,\mu)\) be a complete, \(\sigma\)-finite measure space
and let \((\Y,\sfd)\) be a metric space. Let \(\ell\) be a von Neumann
lifting of \(\mu\). Let \(\varphi\colon\X\to\Y\) be an essentially
separably valued, measurable map. Then it holds that
\begin{equation}\label{eq:Leb_claim1}
\lim_{I\Rightarrow x}\fint_I\sfd\big(\varphi(\cdot),\varphi(x)\big)\,\d\mu=0,
\quad\text{ for }\mu\text{-a.e.\ }x\in\X.
\end{equation}
In particular, if \(\B\) is a Banach space and \(v\colon\X\to\B\)
is a strongly measurable map, then
\begin{equation}\label{eq:Leb_claim2}
v(x)=\lim_{I\Rightarrow x}\fint_I v\,\d\mu,
\quad\text{ for }\mu\text{-a.e.\ }x\in\X.
\end{equation}
The Bochner integral in the right-hand side of
\eqref{eq:Leb_claim2} is well-defined thanks to
Remark \ref{rmk:local_Boch}.
\end{theorem}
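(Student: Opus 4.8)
The plan is to derive both assertions directly from Lemma \ref{lem:pre_Leb}, which already carries all the analytic content; what remains is a short unwinding of the definition of the \(\mathcal I^\ell\)-limit. First I would prove the metric-valued statement \eqref{eq:Leb_claim1}, regarding the average as a map into the Hausdorff space \([0,+\infty)\), and then I would obtain the Bochner-valued statement \eqref{eq:Leb_claim2} from it by a triangle-inequality argument.

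For \eqref{eq:Leb_claim1}, fix a point \(x\) in the full \(\mu\)-measure set \(\hat\X\setminus N\) on which the conclusion of Lemma \ref{lem:pre_Leb} holds. Given a neighbourhood \(U\) of \(0\) in \([0,+\infty)\), choose \(\varepsilon>0\) with \([0,\varepsilon)\subset U\) and apply Lemma \ref{lem:pre_Leb} with the value \(\varepsilon/2\), obtaining \(I\in\mathcal I^\ell_x\) such that \(\sfd(\varphi(z),\varphi(x))<\varepsilon/2\) for \(\mu\)-a.e.\ \(z\in I\). For every \(J\in\mathcal U_x^\ell(I)\) (that is, \(J\in\mathcal I^\ell_x\) with \(J\subset I\)) the same bound holds for \(\mu\)-a.e.\ \(z\in J\); since moreover \(0<\mu(J)<+\infty\) by the differentiation-basis axioms and \(z\mapsto\sfd(\varphi(z),\varphi(x))\) is measurable and essentially bounded on \(J\), we get
\[
\fint_J\sfd\big(\varphi(\cdot),\varphi(x)\big)\,\d\mu\leq\frac{\varepsilon}{2}<\varepsilon,
\]
so the average lies in \([0,\varepsilon)\subset U\). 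This is exactly the condition defining \(\lim_{I\Rightarrow x}\fint_I\sfd(\varphi(\cdot),\varphi(x))\,\d\mu=0\), whence \eqref{eq:Leb_claim1} follows. Here it is harmless that the integrand is only defined on \(\mathcal U_x^\ell(I)\), by the observation following the definition of the \(\mathcal I\)-limit.

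To obtain \eqref{eq:Leb_claim2}, let \(v\colon\X\to\B\) be strongly measurable, hence essentially separably valued, and apply \eqref{eq:Leb_claim1} with \(\Y=\B\) and \(\sfd(\cdot,\cdot)=\|\cdot-\cdot\|_\B\): for \(\mu\)-a.e.\ \(x\) it holds that \(\lim_{I\Rightarrow x}\fint_I\|v(\cdot)-v(x)\|_\B\,\d\mu=0\). For such \(x\) and for \(I\) small enough the map \(v\) is Bochner integrable over \(I\), as recorded in Remark \ref{rmk:local_Boch} (and, in any case, \(\1_I(v-v(x))\) is essentially bounded while the constant \(v(x)\) is integrable on the finite-measure set \(I\)). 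Using linearity of the Bochner integral, the identity \(\fint_I v(x)\,\d\mu=v(x)\), and the norm bound \(\|\int_I w\,\d\mu\|_\B\leq\int_I\|w(\cdot)\|_\B\,\d\mu\), we obtain
\[
\bigg\|\fint_I v\,\d\mu-v(x)\bigg\|_\B=\bigg\|\fint_I\big(v(\cdot)-v(x)\big)\,\d\mu\bigg\|_\B\leq\fint_I\big\|v(\cdot)-v(x)\big\|_\B\,\d\mu.
\]
Since the right-hand side tends to \(0\) along \(\mathcal I^\ell\), so does the left-hand side, which is precisely \eqref{eq:Leb_claim2}.

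Because Lemma \ref{lem:pre_Leb} supplies the substantive estimate, I do not expect a serious obstacle here; the only point that genuinely requires attention is the well-posedness of the averages, namely the measurability and local integrability of the integrands over the sets \(J\) on which one averages. In the metric case this is immediate from the essential boundedness furnished by Lemma \ref{lem:pre_Leb} together with \(\mu(J)<+\infty\); in the Bochner case it is exactly the local Bochner integrability guaranteed by Remark \ref{rmk:local_Boch}. Everything else is a direct translation of the pointwise estimate of Lemma \ref{lem:pre_Leb} into the language of \(\mathcal I^\ell\)-limits.
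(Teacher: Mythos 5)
Your proposal is correct and follows essentially the same route as the paper: both deduce \eqref{eq:Leb_claim1} by unwinding the definition of the \(\mathcal I^\ell\)-limit from the essential-boundedness estimate of Lemma \ref{lem:pre_Leb}, and both obtain \eqref{eq:Leb_claim2} via the triangle inequality \(\big\|\fint_I v\,\d\mu-v(x)\big\|_\B\leq\fint_I\|v(\cdot)-v(x)\|_\B\,\d\mu\) together with the local Bochner integrability from Remark \ref{rmk:local_Boch}. No gaps.
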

\begin{proof}
To prove \eqref{eq:Leb_claim1} amounts to showing that for
\(\mu\)-a.e.\ \(x\in\X\) the following property holds:
\begin{equation}\label{eq:Leb_aux1}
\forall\varepsilon>0\quad\exists\,I\in\mathcal I^\ell_x:\quad
\fint_J\sfd\big(\varphi(\cdot),\varphi(x)\big)\,\d\mu\leq\varepsilon,
\quad\text{for every }J\in\mathcal U^\ell_x(I).
\end{equation}
This readily stems from Lemma \ref{lem:pre_Leb}: given a point
\(x\in\X\) satisfying \eqref{eq:pre_Leb_claim} and any \(\varepsilon>0\),
there exists \(I\in\mathcal I^\ell_x\) such that
\(\sfd\big(\varphi(z),\varphi(x)\big)<\varepsilon\) for
\(\mu\)-a.e.\ \(z\in I\). In particular, for any
\(J\in\mathcal U^\ell_x(I)\) we may estimate
\(\fint_J\sfd\big(\varphi(\cdot),\varphi(x)\big)\,\d\mu\leq\varepsilon\),
whence the claim \eqref{eq:Leb_aux1} follows.

We pass to the verification of \eqref{eq:Leb_claim2}. In view of
Lemma \ref{lem:pre_Leb} and \eqref{eq:Leb_claim1}, we know that for
\(\mu\)-a.e.\ \(x\in\X\) there exists \(I'\in\mathcal I^\ell_x\) such
that \(v\) is bounded (thus, Bochner integrable) on \(I'\) and
\begin{equation}\label{eq:Leb_aux2}
\lim_{I\Rightarrow x}\fint_I\|v(\cdot)-v(x)\|_\B\,\d\mu=0.
\end{equation}
Observe also that for every \(I\in\mathcal U^\ell_x(I')\) it holds that
\(\big\|\fint_I v\,\d\mu-v(x)\big\|_\B\leq\fint_I\|v(\cdot)-v(x)\|_\B\,\d\mu\).
By letting \(I\Rightarrow x\) and using \eqref{eq:Leb_aux2} in the previous
estimate, we obtain \eqref{eq:Leb_claim2}, as required.
\end{proof}
Theorem \ref{thm:Lebesgue_diff} ensures that, once a von Neumann lifting
of the reference measure is fixed, the \(L^p\)-sections of a given Banach
bundle admit a distinguished representative:
\begin{theorem}[Lebesgue points and precise representatives]\label{thm:precise_repr}
Let \((\X,\Sigma,\mu)\) be a complete, \(\sigma\)-finite measure
space and \(\ell\) a von Neumann lifting of \(\mu\).
Let \(\B\) be a Banach space and \(\mathbf E\) a Banach
\(\B\)-bundle over \((\X,\Sigma)\). Let \(p\in[1,\infty]\) and
\(v\in\Gamma_p(\mu;{\mathbf E})\) be given. Then \(\mu\)-a.e.\ \(x\in\hat\X\) is a
\textbf{Lebesgue point} of \(v\), meaning that the following \(\mathcal I^\ell\)-limit
exists and belongs to \({\mathbf E}(x)\):
\begin{equation}\label{eq:def_precise_repr}
\hat v(x)\coloneqq\lim_{I\Rightarrow x}\fint_I v\,\d\mu\in{\mathbf E}(x).
\end{equation}
Moreover, setting \(\hat v(x)\coloneqq 0_{{\mathbf E}(x)}\) for every non-Lebesgue
point \(x\in\X\) of \(v\), it holds that the resulting map \(\hat v\colon\X\to\B\)
belongs to \(\bar\Gamma_p(\mu;{\mathbf E})\) and satisfies \(\pi_\mu(\hat v)=v\).
We will say that the operator \(\hat\#\colon\Gamma_p(\mu;{\mathbf E})\to
\bar\Gamma_p(\mu;{\mathbf E})\) is the \textbf{precise representative map}
associated with \(\ell\).
\end{theorem}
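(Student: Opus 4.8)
The plan is to obtain the statement as a direct consequence of Theorem \ref{thm:Lebesgue_diff}, the only genuine work being to upgrade its $\mu$-a.e.\ conclusion to an everywhere-defined section and to check that the construction does not depend on the chosen representative. First I would fix an arbitrary representative $\bar v\in\bar\Gamma_p(\mu;{\mathbf E})$ of $v$, so that $\bar v\in\mathcal L^p(\mu;\B)$ is strongly measurable, $\bar v(x)\in{\mathbf E}(x)$ for every $x\in\X$, and $\pi_\mu(\bar v)=v$. The key preliminary observation is that the averaging map $\mathcal I^\ell\ni I\mapsto\fint_I\bar v\,\d\mu\in\B$ is insensitive to the choice of representative: since every $I\in\mathcal I^\ell$ has positive and finite measure, the Bochner integral $\int_I\bar v\,\d\mu$ is left unchanged when $\bar v$ is modified on a $\mu$-null set. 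Hence the $\mathcal I^\ell$-limit in \eqref{eq:def_precise_repr}, wherever it exists, is intrinsically attached to $v$, which in particular shows that $\hat v$ does not depend on $\bar v$.

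Next I would apply \eqref{eq:Leb_claim2} of Theorem \ref{thm:Lebesgue_diff} to the strongly measurable map $\bar v$ (recall that $\mu$ is complete and $\sigma$-finite). This produces a $\mu$-full, measurable set $G\subset\hat\X$ on which the limit defining $\hat v(x)$ exists and equals $\bar v(x)$. Since $\bar v(x)\in{\mathbf E}(x)$, this limit belongs to ${\mathbf E}(x)$ for each $x\in G$, so every point of $G$ is a Lebesgue point of $v$; this proves that $\mu$-a.e.\ $x\in\hat\X$ is a Lebesgue point and that \eqref{eq:def_precise_repr} holds there.

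It then remains to verify that $\hat v\in\bar\Gamma_p(\mu;{\mathbf E})$ and $\pi_\mu(\hat v)=v$. On $G$ one has $\hat v=\bar v$ by construction, while the complement $\X\setminus G$ is $\mu$-negligible; since every non-Lebesgue point lies in $\X\setminus G$, completeness of $\mu$ ensures that the set where $\hat v$ may differ from $\bar v$ is measurable and $\mu$-null, whence $\hat v=\bar v$ holds $\mu$-a.e. From this equality I would deduce that $\hat v$ is strongly measurable and that $\|\hat v(\cdot)\|_\B=\|\bar v(\cdot)\|_\B\in\mathcal L^p(\mu)$ up to $\mu$-negligible sets, so that $\hat v\in\mathcal L^p(\mu;\B)$ and $\pi_\mu(\hat v)=\pi_\mu(\bar v)=v$. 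Finally, $\hat v(x)\in{\mathbf E}(x)$ for \emph{every} $x\in\X$: at a Lebesgue point this is part of the definition, whereas at any remaining point $\hat v(x)=0_{{\mathbf E}(x)}$ lies in the linear subspace ${\mathbf E}(x)$. Thus $\hat v$ is a $p$-integrable section of ${\mathbf E}$, as desired.

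I expect the main obstacle to be purely a matter of bookkeeping rather than of substance: the exact set of Lebesgue points is not manifestly measurable, so it would be a mistake to try to describe $\hat v$ directly through that set. The clean route, which I would follow, is to sidestep this issue entirely by comparing $\hat v$ with the fixed representative $\bar v$ and invoking completeness of $\mu$ to transfer strong measurability; the representative-independence established in the first step is exactly what makes $\hat v$ well defined on the quotient $\Gamma_p(\mu;{\mathbf E})$.
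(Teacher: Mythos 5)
Your proposal is correct and follows the same route as the paper, which simply records the theorem as a direct consequence of Theorem \ref{thm:Lebesgue_diff} together with the observation that two $\mu$-a.e.\ equivalent representatives yield the same averages and hence the same precise representative. Your additional care about the measurability of the set of Lebesgue points (resolved via completeness of $\mu$ and comparison with a fixed representative) is a sound elaboration of details the paper leaves implicit.
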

\begin{proof}
The claim is a direct consequence of Theorem \ref{thm:Lebesgue_diff}. Note that
the precise representative map \(\hat\#\colon\Gamma_p(\mu;{\mathbf E})\to
\bar\Gamma_p(\mu;{\mathbf E})\) is well-posed, since two \(\mu\)-a.e.\ equivalent
elements of \(\bar\Gamma_p(\mu;{\mathbf E})\) must share the same precise representative,
as one can immediately deduce from \eqref{eq:def_precise_repr}.
\end{proof}
Another immediate consequence of Theorem \ref{thm:Lebesgue_diff} is the following:
on Banach spaces having the RNP, the so-called Radon--Nikod\'{y}m derivative truly behaves
as a derivative with respect to the differentiation basis induced by some von Neumann
lifting. More precisely:
\begin{corollary}[Differentiation of \(\B\)-valued measures]
\label{cor:diff_measures_RNP}
Let \((\X,\Sigma,\mu)\) be a complete, \(\sigma\)-finite measure
space. Let \(\B\) be a Banach space having the Radon--Nikod\'{y}m
property. Let \(\Omega\) be any \(\B\)-valued measure on
\((\X,\Sigma)\) having bounded variation and satisfying
\(\Omega\ll\mu\). Then it holds
\begin{equation}\label{eq:explicit_RN}
\frac{\d\Omega}{\d\mu}(x)=\lim_{I\Rightarrow x}
\frac{\Omega(I)}{\mu(I)}\in\B,\quad\text{ for }\mu\text{-a.e.\ }x\in\X.
\end{equation}
\end{corollary}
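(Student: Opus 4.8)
The plan is to reduce the statement about the Radon--Nikod\'{y}m derivative to the already-proven Bochner-valued Lebesgue differentiation result, equation \eqref{eq:Leb_claim2} of Theorem \ref{thm:Lebesgue_diff}. Since $\B$ has the Radon--Nikod\'{y}m property and $\Omega$ is a $\B$-valued measure of bounded variation with $\Omega\ll\mu$, the hypothesis guarantees the existence of the Radon--Nikod\'{y}m derivative $v\coloneqq\frac{\d\Omega}{\d\mu}\in L^1(\mu;\B)$ satisfying
\[
\Omega(E)=\int_E v\,\d\mu,\quad\text{ for every }E\in\Sigma.
\]
The key observation is that the difference quotient $\frac{\Omega(I)}{\mu(I)}$ appearing in \eqref{eq:explicit_RN} is nothing but the Bochner average $\fint_I v\,\d\mu$, because $\Omega(I)=\int_I v\,\d\mu$ and $\mu(I)>0$ for every $I\in\mathcal I^\ell$ by item i) of Definition \ref{def:diff_basis}.

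First I would fix a representative $\bar v\in\mathcal L^1(\mu;\B)$ of $v$, which is strongly measurable by definition of the Lebesgue--Bochner space. Then I would apply Theorem \ref{thm:Lebesgue_diff}, specifically the conclusion \eqref{eq:Leb_claim2}, to the map $\bar v$: this yields
\[
\bar v(x)=\lim_{I\Rightarrow x}\fint_I\bar v\,\d\mu,\quad\text{ for }\mu\text{-a.e.\ }x\in\X,
\]
where the $\mathcal I^\ell$-limit is taken with respect to the strong topology of $\B$. Substituting $\fint_I\bar v\,\d\mu=\frac{1}{\mu(I)}\int_I\bar v\,\d\mu=\frac{\Omega(I)}{\mu(I)}$ transforms this identity directly into \eqref{eq:explicit_RN}, and since $\bar v(x)=\frac{\d\Omega}{\d\mu}(x)$ for $\mu$-a.e.\ $x$, the proof is complete.

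The main thing to verify carefully is that the representative chosen does not affect the statement: the left-hand side $\frac{\d\Omega}{\d\mu}(x)$ is only defined $\mu$-a.e., and two representatives of $v$ differ on a $\mu$-null set, but the right-hand side limit $\lim_{I\Rightarrow x}\frac{\Omega(I)}{\mu(I)}$ depends only on $\Omega$ (hence only on the $\mu$-a.e.\ equivalence class of $v$ through the integrals $\int_I\bar v\,\d\mu$), so no ambiguity arises. I expect no genuine obstacle here; the only subtlety worth a sentence is that the local Bochner integrability needed to make $\fint_I v\,\d\mu$ meaningful is automatic once $\mu$ is finite and $\mathcal I=\mathcal I^\ell$, as recorded in Remark \ref{rmk:local_Boch}, and the general $\sigma$-finite case presents no additional difficulty since $v\in L^1(\mu;\B)$ is globally integrable.
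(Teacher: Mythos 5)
Your argument is correct and coincides with the paper's own proof: both fix a representative of \(\frac{\d\Omega}{\d\mu}\), apply \eqref{eq:Leb_claim2} of Theorem \ref{thm:Lebesgue_diff}, and identify \(\fint_I v\,\d\mu\) with \(\Omega(I)/\mu(I)\) via the defining identity \eqref{eq:def_RN_deriv}. The extra remarks on independence of the representative and on local Bochner integrability are accurate but not needed beyond what the paper already records in Remark \ref{rmk:local_Boch}.
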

\begin{proof}
Fix a representative \(v\in\mathcal L^1(\mu;\B)\) of
\(\frac{\d\Omega}{\d\mu}\in L^1(\mu;\B)\). Then Theorem \ref{thm:Lebesgue_diff} yields
\[
v(x)=\lim_{I\Rightarrow x}\fint_I v\,\d\mu=\lim_{I\Rightarrow x}\fint_I\frac{\d\Omega}{\d\mu}\,\d\mu
=\lim_{I\Rightarrow x}\frac{\Omega(I)}{\mu(I)},\quad\text{ for }\mu\text{-a.e.\ }x\in\X,
\]
whence the claimed identity \eqref{eq:explicit_RN} follows.
\end{proof}

Corollary \ref{cor:diff_measures_RNP} can be used to provide an immediate proof of the following well-known fact,
which will be used in Section \ref{ss:disint}. Under the assumptions of Corollary \ref{cor:diff_measures_RNP}, we claim that
\begin{equation}\label{eq:key_ineq_density}
\bigg\|\frac{\d\Omega}{\d\mu}(\cdot)\bigg\|_\B\leq\frac{\d\|\Omega\|_\B}{\d\mu},\quad\text{ holds }\mu\text{-a.e.\ on }\X.
\end{equation}
Indeed, for \(\mu\)-a.e.\ \(x\in\X\) we may estimate
\[
\bigg\|\frac{\d\Omega}{\d\mu}(x)\bigg\|_\B\overset{\eqref{eq:explicit_RN}}=
\bigg\|\lim_{I\Rightarrow x}\frac{\Omega(I)}{\mu(I)}\bigg\|_\B\overset\star=\lim_{I\Rightarrow x}\bigg\|\frac{\Omega(I)}{\mu(I)}\bigg\|_\B
\leq\lim_{I\Rightarrow x}\frac{\|\Omega\|_\B(I)}{\mu(I)}\overset{\eqref{eq:explicit_RN}}=\frac{\d\|\Omega\|_\B}{\d\mu}(x),
\]
where the starred identity follows from the continuity of \(\|\cdot\|_\B\colon\B\to\R\). This yields \eqref{eq:key_ineq_density}.
\section{Liftings of sections}\label{s:lift_sections}
Aim of this section is to obtain a lifting at the level of sections of
a measurable Banach bundle (see Theorem \ref{thm:lift_sects}) by applying
the Lebesgue Differentiation Theorem \ref{thm:Lebesgue_diff}.
\medskip

Let \((\X,\Sigma,\mu)\) be a complete, finite measure space
and let \(\ell\) be a von Neumann lifting of \(\mu\).
In the technical Lemma \ref{lem:construct_ultrafilter}, we will
consider the set \(\mathscr A(\ell)\subset\X=\hat\X\), which is defined as
\[
\mathscr A(\ell)\coloneqq\Big\{x\in\X\;\Big|\;
(\mathcal I^\ell_x,\subset)\text{ has a least element }I_x\Big\}.
\]
The measurability of the set \(\mathscr A(\ell)\) is a consequence of Lemma
\ref{lem:A_ell_meas} below, which provides an equivalent characterisation
of \(\mathscr A(\ell)\) in terms of the atoms of the measure \(\mu\).
Before passing to the actual statement, let us recall some basic
terminology and properties about atoms.
Given a \(\sigma\)-finite measure space \((\X,\Sigma,\mu)\),
we say that a set \(A\in\Sigma\) with \(\mu(A)>0\) is an
\textbf{atom} of \(\mu\) if
\[
B\in\Sigma,\;B\subset A
\quad\Longrightarrow\quad\text{either }\mu(B)=0
\text{ or }\mu(A\setminus B)=0.
\]
The \(\sigma\)-finiteness assumption on \(\mu\) ensures that any atom of \(\mu\)
has finite \(\mu\)-measure. Observe also that if two given sets \(A,A'\in\Sigma\)
satisfy \(\mu(A\Delta A')=0\), then \(A\) is an atom of \(\mu\) if
and only if \(A'\) is an atom of \(\mu\). Therefore, since the
measure \(\mu\) is \(\sigma\)-finite, there can exist at most countably many
atoms of \(\mu\) which are not mutually \(\mu\)-a.e.\ equivalent.
\begin{lemma}\label{lem:A_ell_meas}
Let \((\X,\Sigma,\mu)\) be a complete, finite measure space.
Let \(\ell\) be a von Neumann lifting of \(\mu\). Then it holds that
\begin{equation}\label{eq:equiv_A_ell}
\mathscr A(\ell)=\bigcup_{A\text{ atom of }\mu}\ell(A).
\end{equation}
In particular, it holds that \(\mathscr A(\ell)\in\Sigma\).
\end{lemma}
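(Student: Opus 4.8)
The plan is to prove the set equality \eqref{eq:equiv_A_ell} by double inclusion, after which measurability follows from a counting argument on atoms. Throughout I will freely use two elementary consequences of the lifting axioms: the \emph{monotonicity} of $\ell$ (if $E\subseteq F$, then $\ell(E)=\ell(E\cap F)=\ell(E)\cap\ell(F)\subseteq\ell(F)$) and the \emph{idempotency} $\ell(\ell(E))=\ell(E)$ recorded just before Theorem \ref{thm:von_Neumann_Maharam}.

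For the inclusion $\supseteq$, I would fix an atom $A$ of $\mu$ and a point $x\in\ell(A)$, and show that $\ell(A)$ is the least element of $(\mathcal I^\ell_x,\subset)$. Since $\mu(A)>0$ and $x\in\ell(A)$, we have $\ell(A)\in\mathcal I^\ell_x$. Now take an arbitrary $\ell(E)\in\mathcal I^\ell_x$, so that $\mu(E)>0$ and $x\in\ell(E)$. Applying the atom property to $A\cap E\subseteq A$, either $\mu(A\cap E)=0$ or $\mu(A\setminus E)=0$. The first case is impossible: by axioms i) and ii) it would give $\ell(A)\cap\ell(E)=\ell(A\cap E)=\ell(\varnothing)=\varnothing$, contradicting $x\in\ell(A)\cap\ell(E)$. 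Hence $\mu(A\setminus E)=0$, and writing $A=(A\cap E)\cup(A\setminus E)$ yields $\ell(A)=\ell(A\cap E)\cup\ell(A\setminus E)=\ell(A)\cap\ell(E)\subseteq\ell(E)$. Thus $\ell(A)$ sits inside every member of $\mathcal I^\ell_x$, so it is the least element and $x\in\mathscr A(\ell)$.

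The reverse inclusion $\subseteq$ is where the main work lies: given $x\in\mathscr A(\ell)$ with least element $I_x=\ell(E_0)$, I must produce an atom whose lifting contains $x$, and the natural candidate is $I_x$ itself. So the key step is to prove that $I_x$ is an atom. Arguing by contradiction, suppose $B\in\Sigma$ with $B\subseteq I_x$, $\mu(B)>0$ and $\mu(I_x\setminus B)>0$. Since $\ell(I_x)=I_x$, axiom i) gives the disjoint decomposition $I_x=\ell(B)\sqcup\ell(I_x\setminus B)$, both pieces lying in $\mathcal I^\ell$ and, by monotonicity, contained in $I_x$. As $x\in I_x$, it lies in exactly one piece, say $x\in\ell(B)$; then $\ell(B)\in\mathcal I^\ell_x$ forces $I_x\subseteq\ell(B)\subseteq I_x$, i.e.\ $\ell(B)=I_x$, whence $\ell(I_x\setminus B)=\varnothing$. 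But axiom iii) gives $\mu(\ell(I_x\setminus B))=\mu(I_x\setminus B)>0$, a contradiction (the case $x\in\ell(I_x\setminus B)$ is symmetric). Therefore $I_x$ is an atom, and since $\ell(I_x)=I_x\ni x$, the point $x$ belongs to the right-hand side of \eqref{eq:equiv_A_ell}.

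Finally, for measurability I would invoke the remark preceding the statement: because $\mu$ is finite and distinct atoms are mutually $\mu$-a.e.\ disjoint, there are only countably many atoms up to $\mu$-a.e.\ equivalence, and by axiom ii) the set $\ell(A)$ depends only on the $\mu$-a.e.\ equivalence class of $A$. Hence the union in \eqref{eq:equiv_A_ell} reduces to a countable union of sets $\ell(A)\in\Sigma$, so $\mathscr A(\ell)\in\Sigma$. The only genuinely delicate point is the atom argument in the $\subseteq$ direction; everything else is bookkeeping with the Boolean-homomorphism axioms.
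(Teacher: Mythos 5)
Your proposal is correct and follows essentially the same route as the paper: both inclusions are proved by the same atom/minimality arguments (contradiction via splitting $I_x=\ell(B)\sqcup\ell(I_x\setminus B)$ for one direction, and the dichotomy $\mu(A\cap E)=0$ or $\mu(A\setminus E)=0$ for the other), and measurability follows from the countability of atoms up to $\mu$-a.e.\ equivalence. Your write-up is in fact slightly more explicit than the paper's at the step where $\ell(B)\neq I_x$ is needed, but the underlying argument is identical.
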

\begin{proof}
In order to prove the inclusion \(\subset\) in \eqref{eq:equiv_A_ell},
let \(x\in\mathscr A(\ell)\) be fixed. We aim to show that the least
element \(I_x\) of \((\mathcal I_x^\ell,\subset)\) is an atom of
\(\mu\). We argue by contradiction: suppose there exists \(B\in\Sigma\)
such that \(B\subset I_x\) and \(\mu(B),\mu(I_x\setminus B)>0\).
Since \(I_x=\ell(I_x)\) is the disjoint union of \(\ell(B)\) and
\(\ell(I_x\setminus B)\), we can assume (possibly interchanging
\(B\) and \(I_x\setminus B\)) that \(x\in\ell(B)\), so that
\(\ell(B)\in\mathcal I_x^\ell\). Given that \(\ell(B)\subset I_x\)
and \(\ell(B)\neq I_x\), we contradict the minimality of \(I_x\).
Then \(\ell(I_x)=I_x\) is an atom of \(\mu\), thus its element
\(x\) belongs to the right-hand side of \eqref{eq:equiv_A_ell}.

Conversely, let us prove the inclusion \(\supset\) in
\eqref{eq:equiv_A_ell}. Fix an atom \(A\) of \(\mu\) and
a point \(x\in\ell(A)\). We aim to show that \(\ell(A)\)
coincides with the minimal element \(I_x\) of \((\mathcal I_x^\ell,
\subset)\), whence it would follow that \(x\in\mathscr A(\ell)\).
To prove it, pick any \(I\in\mathcal I_x^\ell\). Since
\(A\setminus I\subset A\) and \(A\) is an atom of \(\mu\), we have
that either \(\mu(A\cap I)=0\) or \(\mu(A\setminus I)=0\).
This implies that either \(\ell(A)\cap I=\ell(A\cap I)=\varnothing\)
or \(\ell(A)\setminus I=\ell(A\setminus I)=\varnothing\). Since
\(x\in\ell(A)\cap I\), it necessarily holds that \(\ell(A)\setminus I
=\varnothing\), which is equivalent to \(\ell(A)\subset I\). Thanks
to the arbitrariness of \(I\in\mathcal I_x^\ell\), we conclude that
\(\ell(A)=I_x\).

Finally, it follows from the previous discussion concerning atoms
that in the family \(\mathcal I^\ell\) there can be at most
countably many atoms, thus the set appearing in the right-hand
side of \eqref{eq:equiv_A_ell} (if non-empty) can be written
as a countable union of atoms and in particular it is measurable.
Therefore, \eqref{eq:equiv_A_ell} ensures that
\(\mathscr A(\ell)\in\Sigma\), thus proving the last part of
the statement.
\end{proof}
Let us now construct a suitable family \(\{\omega_x\}_{x\in\X}\) of ultrafilters induced by a von Neumann lifting \(\ell\).
These ultrafilters \(\omega_x\) will be a key ingredient during the proof of Theorem \ref{thm:lift_sects}.
\begin{lemma}\label{lem:construct_ultrafilter}
Let \((\X,\Sigma,\mu)\) be a complete, finite measure space.
Let \(\ell\) be a von Neumann lifting of \(\mu\).
Let \(x\in\X\) be given. Then there exists an ultrafilter \(\omega_x\)
on the set \(\tilde{\mathcal I}^\ell_x\) such that
\begin{equation}\label{eq:construct_ultrafilter_claim1}
\tilde{\mathcal U}^\ell_x(I)\in\omega_x,
\quad\text{ for every }I\in\mathcal I^\ell_x.
\end{equation}
In particular, if \((\Y,\tau)\) is a Hausdorff topological space and
\(\Phi\colon\mathcal I^\ell_x\to\Y\) is any given map, then
\begin{equation}\label{eq:construct_ultrafilter_claim2}
\lim_{I\Rightarrow x}\Phi(I)=y,\;\;\text{for some }y\in\Y
\quad\Longrightarrow\quad\omega_x\text{-}\lim_{\X\setminus I}\Phi(I)=y.
\end{equation}
Moreover, the following properties are satisfied:
\begin{itemize}
\item[\(\rm i)\)] If \(x\in\mathscr A(\ell)\), then \(\omega_x\)
is the principal ultrafilter having \(\{\X\setminus I_x\}\) as its least element.
\item[\(\rm ii)\)] If \(x\in\X\setminus\mathscr A(\ell)\),
then \(\omega_x\) is a non-principal ultrafilter.
\end{itemize}
\end{lemma}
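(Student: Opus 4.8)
The plan is to realise $\omega_x$ as an arbitrary ultrafilter extending the filter $\mathcal F_x^{\mathcal I^\ell}$ on the ground set $\tilde{\mathcal I}^\ell_x$ associated with the differentiation basis $\mathcal I^\ell$ (Definition \ref{def:filter_ind_by_diff_basis}), and then to extract every remaining assertion from the inclusion $\mathcal F_x^{\mathcal I^\ell}\subset\omega_x$. First I would record that $\mathcal F_x^{\mathcal I^\ell}$ really is a filter: it is upward closed by construction, it avoids $\varnothing$ because $\X\setminus I\in\tilde{\mathcal U}^\ell_x(I)$ (as $I\in\mathcal U^\ell_x(I)$), and it is closed under finite intersections precisely because $\mathcal I^\ell$ is directed by downward inclusion — given $I,I'\in\mathcal I^\ell_x$ one selects $J\in\mathcal I^\ell_x$ with $J\subset I\cap I'$ and checks $\tilde{\mathcal U}^\ell_x(J)\subset\tilde{\mathcal U}^\ell_x(I)\cap\tilde{\mathcal U}^\ell_x(I')$. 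Applying the Ultrafilter Lemma (Theorem \ref{thm:ultrafilter_lemma}) produces $\omega_x$ with $\mathcal F_x^{\mathcal I^\ell}\subset\omega_x$, and \eqref{eq:construct_ultrafilter_claim1} is then immediate, since $\tilde{\mathcal U}^\ell_x(I)\in\mathcal F_x^{\mathcal I^\ell}$ for every $I\in\mathcal I^\ell_x$.

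For the implication \eqref{eq:construct_ultrafilter_claim2} I would argue directly, which is the most transparent route. Assuming $\lim_{I\Rightarrow x}\Phi(I)=y$, fix $U\in\mathcal N_\Y(y)$; by definition of the $\mathcal I^\ell$-limit there is $I\in\mathcal I^\ell_x$ with $\Phi(J)\in U$ for all $J\in\mathcal U^\ell_x(I)$, whence $\tilde{\mathcal U}^\ell_x(I)\subset\{\X\setminus J\,:\,\Phi(J)\in U\}$. Since $\tilde{\mathcal U}^\ell_x(I)\in\omega_x$ by \eqref{eq:construct_ultrafilter_claim1} and $\omega_x$ is upward closed, the right-hand set lies in $\omega_x$, which is exactly the statement $\omega_x\text{-}\lim_{\X\setminus I}\Phi(I)=y$. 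Alternatively, one may invoke Lemma \ref{lem:equiv_I-limit}, Remark \ref{rmk:equiv_ultralim}, and the monotonicity of the pushforward under filter inclusion.

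The structural statements i) and ii) both follow by comparing $\omega_x$ with the order $(\mathcal I^\ell_x,\subset)$. If $x\in\mathscr A(\ell)$ with least element $I_x$, then minimality forces $\mathcal U^\ell_x(I_x)=\{I_x\}$, hence $\tilde{\mathcal U}^\ell_x(I_x)=\{\X\setminus I_x\}$; by \eqref{eq:construct_ultrafilter_claim1} this singleton lies in $\omega_x$, and a filter containing a singleton is the principal ultrafilter at that point, giving i). For ii) I would argue by contraposition: were $\omega_x$ principal, its least element would be a singleton $\{\X\setminus I_0\}$ with $I_0\in\mathcal I^\ell_x$, and being least it would lie in every $\tilde{\mathcal U}^\ell_x(I)\in\omega_x$; translating through complementation, $I_0\subset I$ for all $I\in\mathcal I^\ell_x$, so $I_0$ is a least element of $(\mathcal I^\ell_x,\subset)$ and hence $x\in\mathscr A(\ell)$.

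I expect no genuinely hard step: the single point demanding care is the bookkeeping, since $\omega_x$ lives on the set $\tilde{\mathcal I}^\ell_x$ \emph{of complements} $\X\setminus I$ rather than on the sets $I$ themselves. One must therefore consistently convert statements about $J\subset I$ into statements about the points $\X\setminus J$ of the ground set; keeping this translation straight is what makes both \eqref{eq:construct_ultrafilter_claim2} and part ii) go through, and it is the only place where a slip could occur.
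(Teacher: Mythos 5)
Your proposal is correct and follows essentially the same route as the paper: extend the induced filter $\mathcal F_x^{\mathcal I^\ell}$ to an ultrafilter via the Ultrafilter Lemma, deduce \eqref{eq:construct_ultrafilter_claim1} from the inclusion, and read off \eqref{eq:construct_ultrafilter_claim2} and items i)--ii) from the structure of the sets $\tilde{\mathcal U}^\ell_x(I)$. The only cosmetic differences are that you unwind \eqref{eq:construct_ultrafilter_claim2} directly rather than via Lemma \ref{lem:equiv_I-limit} and Remark \ref{rmk:equiv_ultralim}, and you prove ii) by contraposition where the paper argues directly; both variants are sound.
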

\begin{proof}
Given any \(x\in\X=\hat\X\), let us consider the filter
\(\mathcal F_x^\ell\coloneqq\mathcal F_x^{\mathcal I^\ell}\)
on \(\tilde{\mathcal I}_x^\ell\) induced by \(\mathcal I^\ell\)
as in Definition \ref{def:filter_ind_by_diff_basis}. By using
the Ultrafilter Lemma (Theorem \ref{thm:ultrafilter_lemma}), we obtain an
ultrafilter \(\omega_x\) on \(\tilde{\mathcal I}^\ell_x\) containing
\(\mathcal F_x^\ell\). Note that
\(\tilde{\mathcal U}^\ell_x(I)\in\mathcal F_x^\ell\subset\omega_x\)
for every \(I\in\mathcal I^\ell_x\) and thus
\eqref{eq:construct_ultrafilter_claim1} is satisfied.

We pass to the verification of \eqref{eq:construct_ultrafilter_claim2}.
Define \(\tilde\Phi(\X\setminus I)\coloneqq\Phi(I)\) for every
\(I\in\mathcal I_x^\ell\). Then the claimed implication
\eqref{eq:construct_ultrafilter_claim2} can be proven by
observing that, given any \(y\in\Y\), it holds that
\[
\lim_{I\Rightarrow x}\Phi(I)=y
\quad\overset{\rm (a)}\Longleftrightarrow\quad
\tilde\Phi_*\mathcal F_x^\ell\to y
\quad\overset{\rm (b)}\Longrightarrow\quad
\tilde\Phi_*\omega_x\to y
\quad\overset{\rm (c)}\Longleftrightarrow\quad
\omega_x\text{-}\lim_{\X\setminus I}\Phi(I)=y,
\]
where (a) follows from Lemma \ref{lem:equiv_I-limit}, (b) from
the inclusion \(\mathcal F_x^\ell\subset\omega_x\), (c) from
Remark \ref{rmk:equiv_ultralim}.

In order to prove i), it is sufficient to notice that
\(\{\X\setminus I_x\}=\tilde{\mathcal U}^\ell_x(I_x)\in
\mathcal F_x^\ell\subset\omega_x\).
In order to prove ii), we show that for any
\(I\in\mathcal I^\ell_x\) there exists \(S\in\omega_x\) such that
\(\X\setminus I\notin S\). To this aim, let \(I\in\mathcal I^\ell_x\) be fixed.
Given that \(x\notin\mathscr A(\ell)\), we can find some element
\(J\in\mathcal I^\ell_x\) that is strictly contained in \(I\), so that
\(\X\setminus I\notin\tilde{\mathcal U}^\ell_x(J)\in\omega_x\), thus
reaching the sought conclusion.
\end{proof}
The fact that the ultrafilters \(\omega_x\) are induced by the differentiation basis
\(\mathcal I^\ell\) entails a strong compatibility with the lifting \(\ell\) itself,
as it is evident from the following result, which will have an essential role in one
step of the proof of Theorem \ref{thm:lift_sects} (namely, in order to achieve
\eqref{eq:lift_sects_claim4}).
\begin{proposition}\label{prop:compat_lift}
Let \((\X,\Sigma,\mu)\) be a complete, finite measure space.
Let \(\ell\) be a von Neumann lifting of \(\mu\). Let us fix any
family \(\{\omega_x\}_{x\in\X}\) of ultrafilters as in Lemma
\ref{lem:construct_ultrafilter}. Then for every
function \(f\in L^\infty(\mu)\) it holds that
\begin{equation}\label{eq:compat_lift}
\ell(f)(x)=\omega_x\text{-}\lim_{\X\setminus I}\fint_I f\,\d\mu,
\quad\text{ for every }x\in\X.
\end{equation}
\end{proposition}
\begin{proof}
First of all, let us prove that the identity \eqref{eq:compat_lift}
holds for every simple function \(f\in S(\mu)\). We can write
\(f=\sum_{i=1}^n\lambda_i\1_{A_i}^\mu\), where \((\lambda_i)_{i=1}^n\subset\R\)
and \((A_i)_{i=1}^n\subset\Sigma\) is a partition of \(\X\) made of positive
\(\mu\)-measure sets. Given any \(i=1,\ldots,n\) and \(x\in\ell(A_i)\), we have
that \(\ell(A_i)\in\mathcal I^\ell_x\) and \(\fint_I f\,\d\mu=\lambda_i\)
for every \(I\in\mathcal U^\ell_x\big(\ell(A_i)\big)\), thus accordingly
\(\omega_x\text{-}\lim_{\X\setminus I}\fint_I f\,\d\mu=\lambda_i=\ell(f)(x)\).
Given that \(\bigcup_{i=1}^n\ell(A_i)=\ell\big(\bigcup_{i=1}^n A_i\big)=\X\),
the identity \eqref{eq:compat_lift} is proven for every \(f\in S(\mu)\).

Let us pass to the verification of the general statement.
Let \(f\in L^\infty(\mu)\) and \(\varepsilon>0\) be fixed.
Then we can find \(g_\varepsilon,h_\varepsilon\in S(\mu)\) such that
\(f-\varepsilon\leq g_\varepsilon\leq f\leq h_\varepsilon\leq f+\varepsilon\)
everywhere on \(\X\). By taking first the average integrals over
\(I\in\mathcal I^\ell_x\) and then the ultralimits, for any \(x\in\X\) we get
\[\begin{split}
\ell(f)(x)-\varepsilon&=\ell(f-\varepsilon)(x)\leq\ell(g_\varepsilon)(x)
=\omega_x\text{-}\lim_{\X\setminus I}\fint_I g_\varepsilon\,\d\mu
\leq\omega_x\text{-}\lim_{\X\setminus I}\fint_I f\,\d\mu
\leq\omega_x\text{-}\lim_{\X\setminus I}\fint_I h_\varepsilon\,\d\mu\\
&=\ell(h_\varepsilon)(x)\leq\ell(f+\varepsilon)(x)=\ell(f)(x)+\varepsilon,
\end{split}\]
where we used \eqref{eq:compat_lift} for simple functions.
By letting \(\varepsilon\searrow 0\), we finally obtain \eqref{eq:compat_lift} for \(f\).
\end{proof}
We are now in a position to construct the lifting of sections of a given Banach \(\B\)-bundle:
\begin{theorem}[Lifting of sections]\label{thm:lift_sects}
Let \((\X,\Sigma,\mu)\) be a complete, \(\sigma\)-finite measure space and
\(\ell\) a von Neumann lifting of \(\mu\). Let \(\B\) be a Banach space and
\(\mathbf E\) a Banach \(\B\)-bundle over \((\X,\Sigma)\).
Then there exists a map \(\ell\colon\Gamma_\infty(\mu;\mathbf E)\to\bar\Gamma_\infty(\mu;\B'')\)
such that the following properties hold:
\begin{subequations}\begin{align}
\label{eq:lift_sects_claim1}
\ell(v)(x)=J_\B\big(\hat v(x)\big),&\quad\text{ for every }
v\in\Gamma_\infty(\mu;\mathbf E)\text{ and }\mu\text{-a.e.\ }x\in\X,\\
\label{eq:lift_sects_claim2}
\ell(v+w)(x)=\ell(v)(x)+\ell(w)(x),&\quad\text{ for every }
v,w\in\Gamma_\infty(\mu;\mathbf E)\text{ and }x\in\X,\\
\label{eq:lift_sects_claim3}
\ell(f\cdot v)(x)=\ell(f)(x)\,\ell(v)(x),&\quad\text{ for every }
v\in\Gamma_\infty(\mu;\mathbf E)\text{, }f\in L^\infty(\mu)\text{, and }x\in\X,\\
\label{eq:lift_sects_claim4}
\big\|\ell(v)(x)\big\|_{\B''}\leq\ell\big(\|v(\cdot)\|
_{{\mathbf E}(\cdot)}\big)(x),
&\quad\text{ for every }v\in\Gamma_\infty(\mu;\mathbf E)\text{ and }x\in\X,
\end{align}\end{subequations}
where \(\hat\#\colon\Gamma_\infty(\mu;\mathbf E)\to\bar\Gamma
_\infty(\mu;\mathbf E)\) is the precise representative map
provided by Theorem \ref{thm:precise_repr}. In particular, the operator
\(\ell\colon\Gamma_\infty(\mu;\mathbf E)\to\bar\Gamma_\infty(\mu;\B'')\)
is linear and continuous. Moreover, in the case where \(\mathbf E\) coincides with the constant
bundle \(\B\), we can additionally require that
\begin{equation}\label{eq:lift_sects_claim_extra}
\ell(\1_\X^\mu\bar v)(x)=J_\B(\bar v),\quad\text{ for every }\bar v\in\B\text{ and }x\in\X.
\end{equation}
\end{theorem}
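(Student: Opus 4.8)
The plan is to follow the blueprint of the Introduction: reduce to a finite measure, define \(\ell(v)\) fibrewise as a weak\(^*\) ultralimit of the Bochner averages of \(v\), and then verify the listed properties in the order \eqref{eq:lift_sects_claim1}, \eqref{eq:lift_sects_claim2}, \eqref{eq:lift_sects_claim4}, \eqref{eq:lift_sects_claim3}, \eqref{eq:lift_sects_claim_extra}. By Remark \ref{rmk:wlog_mu_finite} I may assume \(\mu\) finite, so that \(\hat\X=\X\) and both Lemma \ref{lem:construct_ultrafilter} and Proposition \ref{prop:compat_lift} apply; this is harmless, since the precise representatives relative to \(\mu\) and to the auxiliary finite measure coincide \(\mu\)-a.e. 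Fixing a family \(\{\omega_x\}_{x\in\X}\) of ultrafilters as in Lemma \ref{lem:construct_ultrafilter}, I set, for \(v\in\Gamma_\infty(\mu;{\mathbf E})\) and \(x\in\X\),
\[
\ell(v)(x)\coloneqq\omega_x\text{-}\lim_{\X\setminus I}J_\B\Big(\fint_I v\,\d\mu\Big)\in\B'',
\]
the ultralimit being taken in the weak\(^*\) topology of \(\B''\). This is well posed: the averages \(\fint_I v\,\d\mu\) depend only on the \(\mu\)-a.e.\ class of \(v\), and their images under \(J_\B\) all lie in the closed ball of radius \(\|v\|_{L^\infty(\mu;\B)}\) of \(\B''\), which is weak\(^*\)-compact by Banach--Alaoglu, so the ultralimit into this compact Hausdorff space exists.

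Next I would prove \eqref{eq:lift_sects_claim1}. By Theorem \ref{thm:precise_repr} the norm \(\mathcal I^\ell\)-limit \(\lim_{I\Rightarrow x}\fint_I v\,\d\mu=\hat v(x)\) exists for \(\mu\)-a.e.\ \(x\); since \(J_\B\) is a norm isometry (hence norm-to-weak\(^*\) continuous), the map \(I\mapsto J_\B(\fint_I v\,\d\mu)\) then has \(\mathcal I^\ell\)-limit \(J_\B(\hat v(x))\) in the weak\(^*\) topology, and \eqref{eq:construct_ultrafilter_claim2} upgrades this to \(\ell(v)(x)=J_\B(\hat v(x))\). As \(J_\B\circ\hat v\) is strongly measurable and bounded and \(\mu\) is complete, this already gives \(\ell(v)\in\bar\Gamma_\infty(\mu;\B'')\). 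The additivity \eqref{eq:lift_sects_claim2} and homogeneity follow at once from the linearity of the Bochner integral and of \(J_\B\), combined with the linearity of ultralimits in the separated topological vector space \((\B'',\text{weak}^*)\), namely \eqref{eq:lin_ultralim_1}--\eqref{eq:lin_ultralim_2}; thus \(\ell\) is linear.

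For the contraction \eqref{eq:lift_sects_claim4} I would invoke the weak\(^*\)-lower semicontinuity of \(\|\cdot\|_{\B''}\) through \eqref{eq:ultra_lsc}, the isometry of \(J_\B\), the bound \(\|\fint_I v\,\d\mu\|_\B\le\fint_I\|v(\cdot)\|_{{\mathbf E}(\cdot)}\,\d\mu\) and the monotonicity of ultralimits, closing with Proposition \ref{prop:compat_lift} to recognise \(\omega_x\text{-}\lim_{\X\setminus I}\fint_I\|v(\cdot)\|_{{\mathbf E}(\cdot)}\,\d\mu=\ell\big(\|v(\cdot)\|_{{\mathbf E}(\cdot)}\big)(x)\); combined with \eqref{eq:lift_fcts_p3} this also yields \(\|\ell(v)(x)\|_{\B''}\le\|v\|_{L^\infty(\mu;\B)}\), i.e.\ continuity. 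The multiplicativity \eqref{eq:lift_sects_claim3} I would treat in three steps. For \(f=\1_A^\mu\) with \(\mu(A)>0\): if \(x\in\ell(A)\) then \(\tilde{\mathcal U}^\ell_x(\ell(A))\in\omega_x\) by \eqref{eq:construct_ultrafilter_claim1} and on it \(\fint_I\1_A v\,\d\mu=\fint_I v\,\d\mu\), whence \(\ell(\1_A v)(x)=\ell(v)(x)=\ell(\1_A)(x)\,\ell(v)(x)\); if \(x\notin\ell(A)\), then \(x\in\ell(\X\setminus A)\) and on \(\tilde{\mathcal U}^\ell_x(\ell(\X\setminus A))\in\omega_x\) the averages of \(\1_A v\) vanish, giving \(\ell(\1_A v)(x)=0=\ell(\1_A)(x)\,\ell(v)(x)\). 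Linearity in \(f\) extends this to simple functions, and a uniform approximation extends it to all \(f\in L^\infty(\mu)\): for simple \(f_n\to f\) in \(L^\infty(\mu)\), the quantity \(\|\ell(f_n v)(x)-\ell(fv)(x)\|_{\B''}=\|\ell((f_n-f)v)(x)\|_{\B''}\le\ell\big(|f_n-f|\,\|v(\cdot)\|_{{\mathbf E}(\cdot)}\big)(x)\le\|f_n-f\|_{L^\infty(\mu)}\|v\|_{L^\infty(\mu;\B)}\) (by \eqref{eq:lift_sects_claim4} and \eqref{eq:lift_fcts_p3}), and the analogous estimate for the right-hand side, both tend to \(0\) uniformly in \(x\). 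Finally \eqref{eq:lift_sects_claim_extra} is immediate, since for a constant section \(\1_\X^\mu\bar v\) one has \(\fint_I\1_\X^\mu\bar v\,\d\mu=\bar v\) for every \(I\), so the ultralimit equals \(J_\B(\bar v)\).

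The main obstacle will be the passage from simple to arbitrary \(f\) in \eqref{eq:lift_sects_claim3}: because the product of averages is not the average of a product, the identity cannot be checked at a fixed \(I\) and must be propagated through the approximation, where the decisive ingredient is the genuine norm-contraction \eqref{eq:lift_sects_claim4} (not a bare boundedness estimate) together with the isometry \eqref{eq:lift_fcts_p3} of the function lifting, which is what makes the error uniform in \(x\). A second point demanding care, though routine, is the consistent handling of two topologies on \(\B''\): the norm topology, in which the Lebesgue Differentiation Theorem delivers \(\hat v(x)\) and \(J_\B\) is isometric, and the weak\(^*\) topology, in which the defining ultralimit is taken, Banach--Alaoglu guarantees its existence, and the lower semicontinuity of the norm holds.
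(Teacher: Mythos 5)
Your proposal is correct and follows essentially the same route as the paper's proof: the same reduction to finite measure, the same fibrewise weak\(^*\) ultralimit definition (commuting \(J_\B\) with the Bochner average is immaterial), and the same verification of the properties via Theorem \ref{thm:precise_repr}, the linearity of ultralimits, weak\(^*\) lower semicontinuity of the norm together with Proposition \ref{prop:compat_lift}, and the indicator--simple--approximation chain for multiplicativity. No gaps.
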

\begin{proof}
Thanks to Remark \ref{rmk:wlog_mu_finite}, we can assume
without loss of generality that \(\mu\) is finite.
Fix any family \(\{\omega_x\}_{x\in\X}\) of ultrafilters as in Lemma
\ref{lem:construct_ultrafilter}. Now consider an element
\(v\in\Gamma_\infty(\mu;\mathbf E)\). Given any \(x\in\X\), we have that the
family \(\big\{\fint_I J_\B\circ\hat v\,\d\mu\,:\,I\in\mathcal I^\ell_x\big\}\)
is norm-bounded in \(\B''\). In view of the fact that the weak\(^*\) topology
of \(\B''\) is Hausdorff, as well as of the compactness of its restriction to
any closed ball in \(\B''\) (which is guaranteed by Banach--Alaoglu
Theorem), the following definition is well-posed:
\begin{equation}\label{eq:constr_lift_sects}
\ell(v)(x)\coloneqq\omega_x\text{-}\lim_{\X\setminus I}\fint_I J_\B\circ\hat v
\,\d\mu\in\B'',\quad\text{ for every }x\in\X,
\end{equation}
where the ultralimit is taken with respect to the weak\(^*\) topology
of \(\B''\). Let us verify that the resulting map
\(\ell\) takes values into the space \(\bar\Gamma_\infty(\mu;\B'')\) and
fulfils the required properties.\\
{\color{blue}\textsc{Proof of \eqref{eq:lift_sects_claim1}.}}
Fix any \(v\in\Gamma_\infty(\mu;\mathbf E)\).
We know from Theorem \ref{thm:precise_repr} that for \(\mu\)-a.e.\ \(x\in\X\) the strong
\(\mathcal I^\ell\)-limit \(\lim_{I\Rightarrow x}\fint_I\hat v\,\d\mu\in\B\) exists and
coincides with \(\hat v(x)\). In particular, it holds
\[
(J_\B\circ\hat v)(x)=J_\B\bigg(\lim_{I\Rightarrow x}\fint_I\hat v\,\d\mu\bigg)=
\lim_{I\Rightarrow x}\fint_I J_\B\circ\hat v\,\d\mu
\]
with respect to the strong topology of \(\B''\), so also with respect to the weak\(^*\)
topology of \(\B''\). By recalling \eqref{eq:construct_ultrafilter_claim2},
we obtain \eqref{eq:lift_sects_claim1} and thus in particular
\(\ell(v)\in\bar\Gamma_\infty(\mu;\B'')\) for all \(v\in\Gamma_\infty(\mu;{\mathbf E})\).\\
{\color{blue}\textsc{Proof of \eqref{eq:lift_sects_claim2}.}}
Let \(v,w\in\Gamma_\infty(\mu;\mathbf E)\) be fixed. Then \eqref{eq:lin_ultralim_1}
and Theorem \ref{thm:precise_repr} give
\[\begin{split}
\ell(v)(x)+\ell(w)(x)
&=\omega_x\text{-}\lim_{\X\setminus I}\fint_I J_\B\circ\hat v\,\d\mu
+\omega_x\text{-}\lim_{\X\setminus I}\fint_I J_\B\circ\hat w\,\d\mu
=\omega_x\text{-}\lim_{\X\setminus I}\fint_I J_\B\circ(\hat v+\hat w)\,\d\mu\\
&=\ell(v+w)(x),
\end{split}\]
for every \(x\in\X\), whence \eqref{eq:lift_sects_claim2} follows.\\
{\color{blue}\textsc{Proof of \eqref{eq:lift_sects_claim4}.}}
Recall that the bidual norm function \(\B''\ni L\mapsto\|L\|_{\B''}\in\R\)
is weakly\(^*\) lower semicontinuous. This can be easily seen by
noticing that (by definition) the norm \(\|\cdot\|_{\B''}\) can be expressed as
\(\|L\|_{\B''}=\sup\big\{L(\omega)\,:\,\omega\in\B',\,\|\omega\|_{\B'}=1\big\}\)
and that \(\B''\ni L\mapsto L(\omega)\in\R\) is weakly\(^*\) continuous for every
\(\omega\in\B'\). Hence, for every \(v\in\Gamma_\infty(\mu;\mathbf E)\)
and \(x\in\X\) one has
\[\begin{split}
\big\|\ell(v)(x)\big\|_{\B''}&=\bigg\|\,\omega_x\text{-}\lim_{\X\setminus I}
\fint_I J_\B\circ\hat v\,\d\mu\,\bigg\|_{\B''}
\overset{\eqref{eq:ultra_lsc}}\leq\omega_x\text{-}\lim_{\X\setminus I}
\bigg\|\fint_I J_\B\circ\hat v\,\d\mu\,\bigg\|_{\B''}\\
&\leq\omega_x\text{-}\lim_{\X\setminus I}\fint_I
\big\|(J_\B\circ\hat v)(\cdot)\big\|_{\B''}\,\d\mu
=\omega_x\text{-}\lim_{\X\setminus I}\fint_I\|v(\cdot)\|_\B
\,\d\mu
\overset{\eqref{eq:compat_lift}}=
\ell\big(\|v(\cdot)\|_\B\big)(x).
\end{split}\]
{\color{blue}\textsc{Proof of \eqref{eq:lift_sects_claim3}.}}
As an intermediate step, we are first going to prove that:
\begin{subequations}\begin{align}
\label{eq:lift_sects_aux1}
\ell(\lambda v)(x)=\lambda\,\ell(v)(x),&\quad\text{ for every }
v\in\Gamma_\infty(\mu;\mathbf E)\text{, }\lambda\in\R\text{, and }x\in\X,\\
\label{eq:lift_sects_aux2}
\ell(\1_A^\mu\cdot v)(x)=\1_{\ell(A)}(x)\,\ell(v)(x),&\quad\text{ for every }
v\in\Gamma_\infty(\mu;\mathbf E)\text{, }A\in\Sigma\text{, and }x\in\X.
\end{align}\end{subequations}
The identity \eqref{eq:lift_sects_aux1} follows from \eqref{eq:lin_ultralim_2}.
In order to verify \eqref{eq:lift_sects_aux2}, we distinguish two cases. In the case
where \(x\in\ell(A)\), we have that \(\ell(A)\in\mathcal I^\ell_x\) and thus
accordingly it holds that
\[
\ell(\1_A^\mu\cdot v)(x)=\omega_x\text{-}\lim_{\X\setminus I}\fint_I
J_\B\circ(\1_{\ell(A)}\cdot\hat v)\,\d\mu
=\omega_x\text{-}\lim_{\X\setminus I}\fint_{I\cap\ell(A)}J_\B\circ\hat v\,\d\mu
=\ell(v)(x).
\]
Conversely, if \(x\in\X\setminus\ell(A)\), then \(\ell(\X\setminus A)\in
\mathcal I^\ell_x\) and consequently
\[
\ell(\1_A^\mu\cdot v)(x)=\omega_x\text{-}\lim_{\X\setminus I}\fint_I
J_\B\circ(\1_{\ell(A)}\cdot\hat v)\,\d\mu=
\omega_x\text{-}\lim_{\X\setminus I}\fint_{I\cap\ell(\X\setminus A)}
J_\B\circ(\1_{\ell(A)}\cdot\hat v)\,\d\mu=0.
\]
All in all, we have proven that also \eqref{eq:lift_sects_aux2} is verified.
Observe that \eqref{eq:lift_sects_claim2}, \eqref{eq:lift_sects_aux1},
and \eqref{eq:lift_sects_aux2} imply that \eqref{eq:lift_sects_claim3}
holds for every \(f\in S(\mu)\). Now fix any function
\(f\in L^\infty(\mu)\) and pick a sequence \((f_n)_{n\in\N}\subset S(\mu)\)
that converges to \(f\) in \(L^\infty(\mu)\). Therefore, for every \(x\in\X\)
one has
\[\begin{split}
\big\|\ell(f\cdot v)(x)-\ell(f_n\cdot v)(x)\big\|_{\B''}
&=\big\|\ell\big((f-f_n)\cdot v\big)(x)\big\|_{\B''}
\overset{\eqref{eq:lift_sects_claim4}}\leq
\ell\big(\big\|\big((f-f_n)\cdot v\big)(\cdot)\big\|_\B\big)(x)\\
&=\ell(|f-f_n|)(x)\,\ell\big(\|v(\cdot)\|_\B\big)(x)\longrightarrow 0,
\quad\text{ as }n\to\infty,
\end{split}\]
where we used the continuity of
\(\ell\colon L^\infty(\mu)\to\mathcal L^\infty(\Sigma)\).
Since \(\lim_n\ell(f_n)(x)=\ell(f)(x)\) as well, by letting \(n\to\infty\)
in the identity \(\ell(f_n\cdot v)(x)=\ell(f_n)(x)\,\ell(v)(x)\) we finally
obtain \eqref{eq:lift_sects_claim3}.\\
{\color{blue}\textsc{Proof of \eqref{eq:lift_sects_claim_extra}.}} Assume that \(\mathbf E\) is the constant bundle \(\B\).
Then \eqref{eq:constr_lift_sects} immediately implies that \(\ell(\1_\X^\mu\bar v)(x)=J_\B(\bar v)\) holds for every
\(\bar v\in\B\) and \(x\in\X\), thus proving \eqref{eq:lift_sects_claim_extra}.
\end{proof}
\begin{remark}\label{rmk:target_B}{\rm
Note that in the case where the Banach space \(\B\) has a predual,
the conclusion of Theorem \ref{thm:lift_sects} can be improved to:}
There exists a map \(\ell\colon\Gamma_\infty(\mu;\mathbf E)\to\bar\Gamma_\infty(\mu;\B)\)
such that \eqref{eq:lift_sects_claim2}, \eqref{eq:lift_sects_claim3},
\eqref{eq:lift_sects_claim4} hold, and \(\ell(v)(x)=\hat v(x)\) for
all \(v\in\Gamma_\infty(\mu;\mathbf E)\) and \(\mu\)-a.e.\ \(x\in\X\).
\fr\end{remark}

Some comments on Theorem \ref{thm:lift_sects} are in order. As we will see next,
the inequality in \eqref{eq:lift_sects_claim4} cannot, in general, be improved to
an equality. In the last part of this section, we will introduce a subfamily of
\(\Gamma_\infty(\mu;{\mathbf E})\) where one can require the lifting to
be norm-preserving. This has to do with the compactness of the essential range
of those mappings one would like to lift, see Definition \ref{def:ETB} and
Proposition \ref{prop:improved_lift_sects}. This kind of issue was previously
known in the setting of Lebesgue--Bochner spaces, as we will discuss in Appendix
\ref{app:lift_Leb-Boch}, see in particular Theorem \ref{thm:lift_Leb-Bochn}.
As far as we know, the weaker form of \(1\)-Lipschitz lifting we introduced in
Theorem \ref{thm:lift_sects} was not considered earlier. An application of
Theorem \ref{thm:lift_sects} will be given in Section \ref{ss:disint}.
\begin{remark}[Comparison with liftings of normed modules]{\rm
As recalled in Remark \ref{rmk:sections_NMod}, the space of sections
\(\Gamma_\infty(\mu;{\mathbf E})\) is an \(L^\infty(\mu)\)-normed
\(L^\infty(\mu)\)-module. One of the main achievements of \cite{DiMarinoLucicPasqualetto21}
is a lifting result for \(L^\infty(\mu)\)-normed \(L^\infty(\mu)\)-modules, see
\cite[Theorem 3.5]{DiMarinoLucicPasqualetto21}. Therefore, we actually have at disposal
two different notions of liftings for \(\mathscr M\coloneqq\Gamma_\infty(\mu;{\mathbf E})\).
Following the approach in \cite{DiMarinoLucicPasqualetto21}, we obtain an
\(\mathcal L^\infty(\Sigma)\)-normed \(\mathcal L^\infty(\Sigma)\)-module
\(\bar{\mathscr M}\) together with a lifting map \(\ell\colon\mathscr M\to\bar{\mathscr M}\)
that preserves the pointwise norm, meaning that \(\big|\ell(v)\big|=\ell(|v|)\) for all
\(v\in\mathscr M\). Keeping in mind that in general the notion of lifting provided by Theorem
\ref{thm:lift_sects} \emph{cannot} preserve the norm (cf.\ with Theorem \ref{thm:lift_Leb-Bochn}
and Remark \ref{rmk:lift_LB_larger}), this is telling that (with the exception
of very special cases) the fibers \(\{\bar{\mathscr M}_x\}_{x\in\X}\) of the lifted module
\(\bar{\mathscr M}\) built in \cite[Section 3.3]{DiMarinoLucicPasqualetto21}
cannot be embedded in a measurable way in any given Banach space. In particular,
this means that the lifting of \(\Gamma_\infty(\mu;{\mathbf E})\) as a normed
module is somehow unrelated to the fibers of \(\mathbf E\).
\fr}\end{remark}
Let \((\X,\Sigma,\mu)\) be a measure space, \((\Y,\sfd)\) a metric
space. Then we say that a \(\mu\)-measurable map \(\varphi\colon\X\to\Y\) is
\textbf{essentially compactly valued} provided there exist
\(N\in\Sigma\) with \(\mu(N)=0\) and a compact set \(K\subset\Y\)
such that \(\varphi(\X\setminus N)\subset K\).
\begin{definition}\label{def:ETB}
Let \((\X,\Sigma,\mu)\) be a finite measure space, \(\B\) a Banach
space. Let \(\mathbf E\) be a Banach \(\B\)-bundle over
\((\X,\Sigma)\). Then we denote by \({\rm ECV}(\mu;\mathbf E)\)
the set of all elements \(v\in\Gamma_\infty(\mu;\mathbf E)\)
such that some (thus, any) representative
\(\bar v\in\bar\Gamma_\infty(\mu;\mathbf E)\)
of \(v\) is essentially compactly valued.
\end{definition}
We point out that \({\rm ECV}(\mu;\mathbf E)\) is a linear
subspace of \(\Gamma_\infty(\mu;\mathbf E)\). More generally,
\({\rm ECV}(\mu;\mathbf E)\) is an \(L^\infty(\mu)\)-submodule
of \(\Gamma_\infty(\mu;\mathbf E)\). This can be either easily checked
from the definition or immediately deduced from the following result:
\begin{lemma}\label{lem:equiv_ETB}
Let \((\X,\Sigma,\mu)\) be a finite measure space and \(\B\) a Banach space.
Let \(\mathbf E\) be a Banach \(\B\)-bundle over \((\X,\Sigma)\). Then it holds that
\begin{equation}\label{eq:equiv_ETB}
{\rm ECV}(\mu;\mathbf E)=\Gamma_\infty(\mu;\mathbf E)\cap
{\rm cl}_{\Gamma_\infty(\mu;\B)}\big(S(\mu;\B)\big).
\end{equation}
\end{lemma}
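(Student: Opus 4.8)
The plan is to establish the two inclusions in \eqref{eq:equiv_ETB} separately, keeping in mind that \(\Gamma_\infty(\mu;\B)=L^\infty(\mu;\B)\) by \eqref{eq:LB_vs_bundle}, so that the closure on the right-hand side is understood with respect to the \(L^\infty(\mu;\B)\)-norm. The conceptual core is the classical equivalence, on a finite measure space, between having an essentially relatively compact image and being a uniform (that is, \(L^\infty\)) limit of \(\B\)-simple maps; the stated identity then follows by intersecting this equivalence with \(\Gamma_\infty(\mu;\mathbf E)\). Since both conditions defining the two sides of \eqref{eq:equiv_ETB} are manifestly invariant under modification on \(\mu\)-null sets, no ambiguity arises from the phrase ``some (thus, any) representative'' in Definition \ref{def:ETB}.

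For the inclusion \(\subseteq\), I would fix \(v\in{\rm ECV}(\mu;\mathbf E)\) and a representative \(\bar v\in\bar\Gamma_\infty(\mu;\mathbf E)\) together with a compact set \(K\subset\B\) and a set \(N\in\Sigma\) with \(\mu(N)=0\) and \(\bar v(\X\setminus N)\subset K\). Given \(\varepsilon>0\), the total boundedness of \(K\) provides finitely many points \(y_1,\dots,y_m\in\B\) with \(K\subset\bigcup_{i=1}^m\{w\in\B:\|w-y_i\|_\B<\varepsilon\}\); disjointifying the measurable preimages via \(A_i\coloneqq\bar v^{-1}\big(\{w:\|w-y_i\|_\B<\varepsilon\}\big)\setminus\bigcup_{j<i}A_j\) (measurable, as \(\bar v\) is strongly measurable) and setting \(s\coloneqq\sum_{i=1}^m\1_{A_i}y_i\) yields a map in \(\mathcal S(\mu;\B)\), where finiteness of \(\mu\) guarantees \(\mu(A_i)<+\infty\). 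By construction \(\|\bar v(x)-s(x)\|_\B<\varepsilon\) for \(\mu\)-a.e.\ \(x\), so \(\|v-\pi_\mu(s)\|_{L^\infty(\mu;\B)}\le\varepsilon\). Letting \(\varepsilon\searrow 0\) shows \(v\in{\rm cl}_{\Gamma_\infty(\mu;\B)}\big(S(\mu;\B)\big)\), while \(v\in\Gamma_\infty(\mu;\mathbf E)\) holds by hypothesis.

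For the reverse inclusion \(\supseteq\), I would take \(v\in\Gamma_\infty(\mu;\mathbf E)\) lying in the \(L^\infty\)-closure of \(S(\mu;\B)\), fix a section representative \(\bar v\in\bar\Gamma_\infty(\mu;\mathbf E)\), and choose \(s_n\in\mathcal S(\mu;\B)\) with \(\|v-\pi_\mu(s_n)\|_{L^\infty(\mu;\B)}\le 2^{-n}\). After discarding a single \(\mu\)-null set \(N\), we may assume \(\|\bar v(x)-s_n(x)\|_\B\le 2^{-n}\) for every \(x\in\X\setminus N\) and every \(n\). Denoting by \(F_n\subset\B\) the finite set of values attained by \(s_n\), this gives \(\bar v(\X\setminus N)\subset\bigcup_{y\in F_n}\{w:\|w-y\|_\B\le 2^{-n}\}\) for each \(n\), so the essential image \(\bar v(\X\setminus N)\) is totally bounded; since \(\B\) is complete, its closure \(K\) is compact, and the containment \(\bar v(\X\setminus N)\subset K\) exhibits \(\bar v\) as essentially compactly valued. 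Hence \(v\in{\rm ECV}(\mu;\mathbf E)\).

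I do not expect a serious obstacle here: the argument is a standard total-boundedness dichotomy. The only points requiring care are the bookkeeping of null sets (in the \(\supseteq\) direction, discarding a single \(N\) that works uniformly in \(n\), which is legitimate as a countable union of null sets) and the two uses of the hypotheses that make the simple maps and the limiting set well-behaved, namely the finiteness of \(\mu\) (to ensure \(s\in\mathcal S(\mu;\B)\) in the \(\subseteq\) direction) and the completeness of \(\B\) (to upgrade total boundedness to relative compactness in the \(\supseteq\) direction).
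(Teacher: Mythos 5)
Your proof is correct and follows essentially the same route as the paper's: for \(\subseteq\), cover the essentially compact (hence totally bounded) image by finitely many \(\varepsilon\)-balls, disjointify the preimages, and read off a simple map within \(\varepsilon\) in \(L^\infty(\mu;\B)\); for \(\supseteq\), use a sequence of simple approximants to cover the essential image by finitely many small balls and conclude total boundedness, hence relative compactness by completeness of \(\B\). The only (harmless) difference is that you make explicit the passage from total boundedness to compactness of the closure, which the paper leaves implicit.
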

\begin{proof}
To prove \(\subset\), let \(v\in{\rm ECV}(\mu;\mathbf E)\) be
fixed. Pick a representative \(\bar v\in\bar\Gamma_\infty(\mu;\mathbf E)\) of \(v\).
Fix \(N\in\Sigma\) with \(\mu(N)=0\) such
that the set \(\bar v(\X\setminus N)\) is totally bounded. Given any
\(\varepsilon>0\), we can thus find finitely many vectors
\(v_1,\ldots,v_n\in\B\) such that \(\bar v(\X\setminus N)\subset
\bigcup_{i=1}^n B_\varepsilon(v_i)\). Then we define \(E'_i\coloneqq
\big\{x\in\X\,:\,\|\bar v(x)-v_i\|_\B<\varepsilon\big\}
\in\Sigma\) for every \(i=1,\ldots,n\). We also set
\(E_1\coloneqq E'_1\cup N\) and \(E_i\coloneqq E'_i\setminus\bigcup
_{j<i}E'_j\) for every \(i=2,\ldots,n\). The resulting family
\((E_i)_{i=1}^n\subset\Sigma\) is a partition of \(\X\) and the element
\(w\coloneqq\sum_{i=1}^n\1_{E_i}^\mu\cdot v_i\in S(\mu;\B)\)
satisfies \(\|v-w\|_{\Gamma_\infty(\mu;\B)}\leq\varepsilon\) by
construction. Thanks to the arbitrariness of \(\varepsilon\),
we conclude that \(v\in{\rm cl}_{\Gamma_\infty(\mu;\B)}\big(
S(\mu;\B)\big)\), proving \(\subset\) in \eqref{eq:equiv_ETB}.

Conversely, fix \(v\in\Gamma_\infty(\mu;\mathbf E)\cap{\rm cl}
_{\Gamma_\infty(\mu;\B)}\big(S(\mu;\B)\big)\) and a representative \(\bar v\in\bar\Gamma_\infty(\mu;\mathbf E)\) of \(v\).
Given any \(k\in\N\), we can find \(n_k\in\N\), pairwise disjoint
sets \((E_i^k)_{i=1}^{n_k}\subset\Sigma\), and
\((v_i^k)_{i=1}^{n_k}\subset\B\) such that
\(N_k\coloneqq\X\setminus\bigcup_{i=1}^{n_k}E_i^k\in\Sigma\)
is \(\mu\)-null and \(\|\bar v(x)-v_i^k\|_\B\leq 1/k\)
for every \(i=1,\ldots,n_k\) and \(x\in E_i^k\). We claim that
\(\bar v(\X\setminus N)\) is totally bounded, where \(N\in\Sigma\)
stands for the \(\mu\)-null set \(\bigcup_{k\in\N}N_k\). Indeed,
given any \(\varepsilon>0\) and chosen \(k\in\N\) so that
\(1/k<\varepsilon\), we have that the set \(\hat v(\X\setminus N)\)
is contained in \(\bigcup_{i=1}^{n_k}B_\varepsilon(v_i^k)\),
thus showing that \(\hat v(\X\setminus N)\) is totally bounded.
Therefore, also the inclusion \(\supset\) in \eqref{eq:equiv_ETB}
is proven.
\end{proof}
\begin{proposition}\label{prop:improved_lift_sects}
Let \((\X,\Sigma,\mu)\) be a complete, \(\sigma\)-finite measure space,
\(\ell\) a von Neumann lifting of \(\mu\). Let \(\B\) be a Banach
space, \(\mathbf E\) a Banach \(\B\)-bundle over \((\X,\Sigma)\).
Then there exists a map \(\ell\colon\Gamma_\infty(\mu;\mathbf E)
\to\bar\Gamma_\infty(\mu;\B'')\) as in Theorem \ref{thm:lift_sects}
that for every \(v\in{\rm ECV}(\mu;\mathbf E)\) also satisfies
\begin{equation}\label{eq:lift_sects_extra}
\ell(v)(x)\in J_\B(\B),\qquad\big\|\ell(v)(x)\big\|_{\B''}=\ell\big(\|v(\cdot)\|_\B\big)(x),\quad\text{ for every }x\in\X.
\end{equation}
\end{proposition}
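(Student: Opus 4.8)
The plan is to take the operator $\ell\colon\Gamma_\infty(\mu;\mathbf E)\to\bar\Gamma_\infty(\mu;\B'')$ produced by Theorem \ref{thm:lift_sects} (fixing one family $\{\omega_x\}_{x\in\X}$ of ultrafilters) and to verify the extra properties \eqref{eq:lift_sects_extra} by approximating with \(\B\)-simple maps, using the characterisation of \({\rm ECV}(\mu;\mathbf E)\) in Lemma \ref{lem:equiv_ETB}. As in the proof of Theorem \ref{thm:lift_sects} we may assume \(\mu\) finite (Remark \ref{rmk:wlog_mu_finite}). Since \eqref{eq:constr_lift_sects} only involves the precise representative \(\hat v\), which is computed independently of the bundle, the operator on \(\Gamma_\infty(\mu;\mathbf E)\) is the restriction, via the same \(\{\omega_x\}\), of the analogous operator built for the constant bundle \(\B\); I would therefore regard \(\ell\) as acting on the larger space \(\Gamma_\infty(\mu;\B)=L^\infty(\mu;\B)\), where it also satisfies \eqref{eq:lift_sects_claim_extra}. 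Combining \eqref{eq:lift_sects_claim2} and \eqref{eq:lift_sects_claim4} with the fact that the scalar lifting is an isometry (see \eqref{eq:lift_fcts_p3}) shows that \(\ell\colon\Gamma_\infty(\mu;\B)\to\bar\Gamma_\infty(\mu;\B'')\) is \(1\)-Lipschitz, hence continuous, when the target carries the norm \(\sup_x\|\cdot\|_{\B''}\).

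First I would check \eqref{eq:lift_sects_extra} directly on \(\B\)-simple maps. Given \(w=\sum_{i=1}^n\1_{E_i}^\mu v_i\in S(\mu;\B)\), the multiplicativity \eqref{eq:lift_sects_claim3} together with \eqref{eq:lift_sects_claim_extra} and \(\ell(\1_{E_i}^\mu)=\1_{\ell(E_i)}\) gives \(\ell(w)(x)=\sum_{i=1}^n\1_{\ell(E_i)}(x)\,J_\B(v_i)\), so that \(\ell(w)(x)\in J_\B(\B)\) for every \(x\in\X\), with \(\|\ell(w)(x)\|_{\B''}=\|v_i\|_\B\) whenever \(x\in\ell(E_i)\). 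On the other hand, the scalar lifting of \(\|w(\cdot)\|_\B=\sum_i\1_{E_i}\|v_i\|_\B\) equals \(\sum_i\1_{\ell(E_i)}\|v_i\|_\B\), which takes the value \(\|v_i\|_\B\) at each \(x\in\ell(E_i)\). Since the sets \(\ell(E_i)\) cover \(\X\) (after completing \((E_i)\) to a partition with coefficient \(0\) on the remainder), both identities in \eqref{eq:lift_sects_extra} hold, for \emph{every} \(x\in\X\), on the dense class \(S(\mu;\B)\).

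Finally I would pass to the limit. Fix \(v\in{\rm ECV}(\mu;\mathbf E)\); by Lemma \ref{lem:equiv_ETB} there is \((w_k)_{k\in\N}\subset S(\mu;\B)\) with \(w_k\to v\) in \(\Gamma_\infty(\mu;\B)\). By the continuity above, \(\ell(w_k)\to\ell(v)\) in the sup-norm of \(\bar\Gamma_\infty(\mu;\B'')\), hence \(\ell(w_k)(x)\to\ell(v)(x)\) in \(\B''\) for \emph{every} \(x\). As each \(\ell(w_k)(x)\in J_\B(\B)\) and \(J_\B(\B)\) is norm-closed in \(\B''\) (because \(\B\) is complete and \(J_\B\) isometric), we get \(\ell(v)(x)\in J_\B(\B)\) for every \(x\). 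For the norm identity I would let \(k\to\infty\) in \(\|\ell(w_k)(x)\|_{\B''}=\ell\big(\|w_k(\cdot)\|_\B\big)(x)\): the left side tends to \(\|\ell(v)(x)\|_{\B''}\) by continuity of the norm, while the right side tends to \(\ell\big(\|v(\cdot)\|_\B\big)(x)\), since \(\|w_k(\cdot)\|_\B\to\|v(\cdot)\|_\B\) in \(L^\infty(\mu)\) (reverse triangle inequality) and \(\ell\colon L^\infty(\mu)\to\mathcal L^\infty(\Sigma)\) is a continuous isometry, giving uniform—hence pointwise—convergence.

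The one point that is more than routine is the insistence on \emph{every} \(x\in\X\) rather than \(\mu\)-a.e.\ \(x\): this is precisely what forces the transfer of the identities through the \emph{uniform} convergences \(\ell(w_k)\to\ell(v)\) and \(\ell(\|w_k(\cdot)\|_\B)\to\ell(\|v(\cdot)\|_\B)\) coming from sup-norm continuity, rather than through the a.e.\ statement \eqref{eq:lift_sects_claim1}. A secondary subtlety is that the approximating \(w_k\) need not be sections of \(\mathbf E\), which is harmless once \(\ell\) is viewed on the ambient space \(\Gamma_\infty(\mu;\B)\). Conceptually, \eqref{eq:lift_sects_extra} reflects the fact that for essentially compactly valued \(v\) the weak\(^*\) ultralimit \eqref{eq:constr_lift_sects} ranges in the norm-compact set \(J_\B\big(\overline{\mathrm{conv}}\,K\big)\), on which the weak\(^*\) and norm topologies coincide; the approximation argument is a clean way to avoid invoking this directly.
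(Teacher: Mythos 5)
Your proposal is correct and follows essentially the same route as the paper's proof: reduce to the finite-measure case, realise the lifting as the restriction of the one built for the constant bundle $\B$ (so that \eqref{eq:lift_sects_claim_extra} is available), verify \eqref{eq:lift_sects_extra} directly on $S(\mu;\B)$ via \eqref{eq:lift_sects_claim2}, \eqref{eq:lift_sects_claim3} and $\ell(\1_{E}^\mu)=\1_{\ell(E)}$, and then pass to the limit over $\mathrm{ECV}(\mu;\mathbf E)$ using Lemma \ref{lem:equiv_ETB} and the sup-norm continuity of $v\mapsto\ell(v)$ and of the scalar lifting. Your explicit treatment of the membership $\ell(v)(x)\in J_\B(\B)$ via norm-closedness of $J_\B(\B)$ only spells out a step the paper leaves implicit.
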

\begin{proof}
Thanks to Remark \ref{rmk:wlog_mu_finite}, we can assume
without loss of generality that the measure \(\mu\) is finite.
Moreover, we can apply Theorem \ref{thm:lift_sects}
to the constant bundle \(\B\), thus obtaining a lifting map
\(\tilde\ell\colon\Gamma_\infty(\mu;\B)\to\bar
\Gamma_\infty(\mu;\B'')\), whose restriction \(\ell\coloneqq
\tilde\ell|_{\Gamma_\infty(\mu;{\mathbf E})}\) clearly verifies
\eqref{eq:lift_sects_claim1}, \eqref{eq:lift_sects_claim2},
\eqref{eq:lift_sects_claim3}, and \eqref{eq:lift_sects_claim4}.
Moreover, thanks to \eqref{eq:lift_sects_claim_extra} we can also require that
\begin{equation}\label{eq:tilde_ell_on_const}
\tilde\ell(\1_\X^\mu\bar v)(x)=J_\B(\bar v),
\quad\text{ for every }\bar v\in\B\text{ and }x\in\X.
\end{equation}
Since \(\Gamma_\infty(\mu;\mathbf E)\ni v\mapsto\big\|
\tilde\ell(v)(\cdot)\big\|_{\B''}\in\mathcal L^\infty(\Sigma)\) and
\(\Gamma_\infty(\mu;\mathbf E)\ni v\mapsto\ell\big(\|v(\cdot)\|_\B\big)
\in\mathcal L^\infty(\Sigma)\) are continuous, we know from Lemma
\ref{lem:equiv_ETB} that in order to prove \eqref{eq:lift_sects_extra}
it suffices to show that
\begin{equation}\label{eq:aux_id_lift_on_simple}
\big\|\tilde\ell(v)(x)\big\|_{\B''}=\ell\big(\|v(\cdot)\|_\B\big)(x),
\quad\text{ for every }v\in S(\mu;\B)\text{ and }x\in\X.
\end{equation}
To this aim, fix any \(v=\sum_{i=1}^n\1_{E_i}^\mu\bar v_i
\in S(\mu;\B)\). Then for every \(x\in\X\) we can compute
\[\begin{split}
\big\|\tilde\ell(v)(x)\big\|_{\B''}
&\overset{\eqref{eq:lift_sects_claim2}}=
\bigg\|\sum_{i=1}^n\tilde\ell(\1_{E_i}^\mu\bar v_i)(x)\bigg\|_{\B''}
\overset{\eqref{eq:lift_sects_claim3}}=\bigg\|\sum_{i=1}^n
\1_{\ell(E_i)}(x)\,\tilde\ell(\1_\X^\mu\bar v_i)(x)\bigg\|_{\B''}\\
&\overset{\phantom{\eqref{eq:lift_sects_claim2}}}=\sum_{i=1}^n
\1_{\ell(E_i)}(x)\big\|\tilde\ell(\1_\X^\mu\bar v_i)(x)\big\|_{\B''}
\overset{\eqref{eq:tilde_ell_on_const}}=
\sum_{i=1}^n\1_{\ell(E_i)}(x)\|\bar v_i\|_\B\\
&\overset{\phantom{\eqref{eq:lift_sects_claim2}}}=
\ell\bigg(\sum_{i=1}^n\|\bar v_i\|_\B\1_{E_i}^\mu\bigg)(x)=\ell\big(\|v(\cdot)\|_\B\big)(x),
\end{split}\]
whence \eqref{eq:aux_id_lift_on_simple} follows. Therefore,
the statement is achieved.
\end{proof}
\section{Disintegration of vector-valued measures}\label{s:disint}
In this section, we apply Theorem \ref{thm:lift_sects} to obtain a disintegration
result (Theorem \ref{thm:disint_vect_meas}) for vector measures defined on a Polish
space, whose target is a Banach space which has the Radon--Nikod\'{y}m property. An important
feature of Polish spaces, which is a key ingredient in the proof of Theorem
\ref{thm:disint_vect_meas}, is the existence of strong liftings (cf.\ with Definition \ref{def:strong_lift}). 
\subsection{Strong liftings}\label{ss:strong_lift}
A topological space \((\X,\tau)\) is said to be a \textbf{Polish space} if there exists
a complete, separable distance \(\sfd\) on \(\X\) which induces the topology \(\tau\).
We will also say that \(\tau\) is a \textbf{Polish topology} on \(\X\). We denote by
\(\mathscr B(\X)\) the Borel \(\sigma\)-algebra of \((\X,\tau)\). Moreover, the
\textbf{support} of a given \(\sigma\)-finite Borel measure \(\mu\) on \(\X\)
is the closed set \({\rm spt}(\mu)\subset\X\) defined as
\[
{\rm spt}(\mu)\coloneqq\X\setminus\bigcup\big\{U\in\tau\;\big|\;\mu(U)=0\big\}.
\]
We denote by \(C(\X)\) the space of all continuous functions \(f\colon\X\to\R\),
while \(C_b(\X)\) stands for the space of those elements of \(C(\X)\) that are bounded.
Observe that \(C_b(\X)\subset\mathcal L^\infty\big(\mathscr B(\X)\big)\).
Moreover, the linear space \(C_b(\X)\) becomes a Banach space if endowed with the supremum norm
\(\|f\|_{C_b(\X)}\coloneqq\sup_\X|f|\). If \(\X\) is compact, then the Banach space \(C(\X)=C_b(\X)\) is separable.
\begin{definition}[Strong lifting]\label{def:strong_lift}
Let \((\X,\Sigma,\mu)\) be a complete, \(\sigma\)-finite measure space. Let \(\tau\)
be a Polish topology on \(\X\) with \(\tau\subset\Sigma\). Then we say that a given
von Neumann lifting \(\ell\) of \(\mu\) is a \textbf{strong lifting} of \(\mu\)
provided it holds that
\begin{equation}\label{eq:def_strong_lift}
U\cap{\rm spt}(\mu)\subset\ell(U),\quad\text{ for every }U\in\tau.
\end{equation}
\end{definition}

One can readily check that the condition \eqref{eq:def_strong_lift} holds if and only if
\(\ell(\pi_\mu(f))=f\) on \({\rm spt}(\mu)\) for every \(f\in C_b(\X)\),
where the map \(\ell\colon L^\infty(\mu)\to\mathcal L^\infty(\Sigma)\) is the one given by Theorem \ref{thm:lift_fcts}.
\medskip

Strong liftings exist on all Polish spaces, as proved for example in \cite{IonescuTulceaIonescuTulcea67}.
We will provide an alternative proof of the existence of strong liftings on Polish spaces in Corollary \ref{cor:strong_lifting_on_mms}.
\begin{theorem}[Existence of strong liftings]\label{thm:exist_strong_lift}
Let \((\X,\Sigma,\mu)\) be a complete, \(\sigma\)-finite measure
space. Let \(\tau\) be a Polish topology on \(\X\) with
\(\tau\subset\Sigma\). Then there exists a strong lifting
\(\ell\) of \(\mu\).
\end{theorem}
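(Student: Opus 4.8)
The plan is to start from an arbitrary lifting, which exists by the von Neumann--Maharam Theorem \ref{thm:von_Neumann_Maharam}, and to surgically modify it on a single \(\mu\)-null set so that it becomes strong, exploiting that strongness is a condition about \emph{open} sets. By Remark \ref{rmk:wlog_mu_finite} I may assume \(\mu\) is finite: passing to a mutually absolutely continuous finite measure changes neither the null open sets (hence \({\rm spt}(\mu)\)), nor completeness, nor the notion of a (strong) lifting. First I would reduce strongness to countably many instances. Fix a countable base \(\{V_n\}_{n\in\N}\) of \(\tau\) (Polish spaces are second countable). Since any lifting is monotone (\(E\subset F\Rightarrow\ell(E)\subset\ell(F)\), from preservation of \(\cap\)) and an arbitrary open \(U\) is a union of basic sets \(V_n\), the inclusion \(U\cap{\rm spt}(\mu)\subset\ell(U)\) follows at once from the inclusions \(V_n\cap{\rm spt}(\mu)\subset\ell(V_n)\). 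Thus it suffices to produce a lifting \(\ell'\) with \(V_n\cap{\rm spt}(\mu)\subset\ell'(V_n)\) for every \(n\).

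Next I would isolate the null obstruction. Write \(S:={\rm spt}(\mu)\). For the given lifting \(\ell\), each set \(B_n:=(V_n\cap S)\setminus\ell(V_n)\) is \(\mu\)-null, being contained in \(V_n\Delta\ell(V_n)\), so \(B:=\bigcup_n B_n\) is \(\mu\)-null; on the conull set \(S':=S\setminus B\) the desired inclusions \emph{already} hold, i.e.\ \(x\in S'\cap V_n\Rightarrow x\in\ell(V_n)\). Crucially, \(S'\) is dense in \(S\): any open set meeting \(S\) has positive measure and therefore meets the conull set \(S'\). For each \(x\in B\subset S\) I would then pick a sequence \((x_k)\subset S'\) with \(x_k\to x\) and, by the Ultrafilter Lemma (Theorem \ref{thm:ultrafilter_lemma}) applied to the Fr\'echet filter of cofinite sets, a nonprincipal ultrafilter \(\omega_x\) on \(\N\). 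I leave \(\ell\) untouched off \(B\) and reroute it through the good points on \(B\):
\[
\ell'(E):=\big(\ell(E)\setminus B\big)\cup\big\{x\in B\;\big|\;\{k\in\N:x_k\in\ell(E)\}\in\omega_x\big\},\qquad\text{for every }E\in\Sigma.
\]

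Then I would check that \(\ell'\) is a strong lifting. For the axioms of Definition \ref{def:lifting}: for each fixed \(x\in B\), the map sending \(E\) to \(1\) if \(\{k:x_k\in\ell(E)\}\in\omega_x\) and to \(0\) otherwise is a Boolean homomorphism \(\Sigma\to\{0,1\}\), being the composition of the homomorphism \(E\mapsto\{k:x_k\in\ell(E)\}\) with the two-valued homomorphism induced by the ultrafilter \(\omega_x\); together with \(\ell'=\ell\) off \(B\), this yields axiom i). Axiom ii) holds since \(\mu(E\Delta F)=0\) forces \(\ell(E)=\ell(F)\), whence the very recipe for \(\ell'(E)\) and \(\ell'(F)\) agrees; axiom iii) holds since \(\ell'(E)\Delta\ell(E)\subset B\) is null and \(\mu(E\Delta\ell(E))=0\). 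Here completeness of \(\mu\) guarantees \(\ell'(E)\in\Sigma\): off \(B\) it equals the measurable set \(\ell(E)\setminus B\), while on the null set \(B\) it is an arbitrary subset, measurable precisely because \(\mu\) is complete. For strongness, fix a basic \(V_n\) and \(x\in V_n\cap S\): if \(x\in S'\) then \(x\in\ell(V_n)=\ell'(V_n)\) off \(B\); if \(x\in B\), then since \(V_n\) is open and \(x_k\to x\) we have \(x_k\in V_n\) for all large \(k\), hence \(x_k\in\ell(V_n)\) (as \(x_k\in S'\)), so \(\{k:x_k\in\ell(V_n)\}\) is cofinite, thus in \(\omega_x\), whence \(x\in\ell'(V_n)\). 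By the first paragraph this gives \(U\cap S\subset\ell'(U)\) for all open \(U\), i.e.\ \(\ell'\) is strong.

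The main obstacle, and the reason the construction must be arranged exactly this way, is reconciling two competing demands on the bad set \(B\): the redefined map must simultaneously remain a genuine \(\Sigma\)-valued Boolean homomorphism and see the correct topological value at each point. A naive pointwise limit \(\lim_k\1_{\ell(E)}(x_k)\) fails the first demand, since it need neither exist nor be two-valued; replacing it by an \emph{ultra}limit along \(\omega_x\) cures this, as an ultrafilter induces a two-valued homomorphism, while the openness of the \(V_n\) delivers the correct value through the elementary ``\(x_k\in V_n\) eventually'' step. That the globally redefined \(\ell'\) stays measurable, despite the entirely unconstrained choices made at each point of \(B\), is salvaged solely by completeness of \(\mu\) --- consistent with the principle, noted in the Introduction, that completeness is exactly what is needed in passing to an honest lifting.
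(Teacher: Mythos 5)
Your argument is correct, but it takes a genuinely different route from the paper's. The paper proves this statement in Appendix \ref{app:Leb_to_lift} (Corollary \ref{cor:strong_lifting_on_mms}): it builds a lifting that is strong \emph{by construction}, taking a sequence of refining Borel partitions whose elements have diameters (relative to \({\rm spt}(\mu)\)) shrinking to zero, invoking the martingale-convergence result of Theorem \ref{thm:Doob} to see that the associated density operator \(\mathcal D_{\mathcal P}\) is a lower density, extending it to a lifting via Lemma \ref{lem:from_lower_dens_to_lift}, and finally observing that every \(x\in U\) with \(U\) open satisfies \(P_k^x\cap{\rm spt}(\mu)\subset U\) for \(k\) large, hence is a density point of \(U\). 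You instead take an \emph{arbitrary} von Neumann lifting, observe that second countability makes it automatically strong off a single \(\mu\)-null set \(B\), and repair it on \(B\) by rerouting through ultralimits along sequences of good points converging to each bad point --- the classical ``almost strong implies strong'' correction, which works precisely because the space is metrizable. Each approach buys something. Yours is shorter, bypasses the differentiation machinery of the appendix entirely, shows that \emph{every} lifting becomes strong after modification on one null set, and covers the theorem in its stated generality (any complete \(\Sigma\supset\tau\), whereas the Corollary assumes \(\Sigma\) is the \(\mu\)-completion of \(\mathscr B(\X)\)). The paper's route yields the sharper inclusion \(U\subset\ell_{\mathcal P}(U)\) for \emph{all} open \(U\), not just \(U\cap{\rm spt}(\mu)\), together with the explicit density-point sandwich \eqref{eq:cond_lift_induced}, and ties the existence of (strong) liftings to the Lebesgue Density Theorem, which is the point of that appendix. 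Two details you should make explicit: the assertion that \(S'\) is conull rests on \(\mu\big(\X\setminus{\rm spt}(\mu)\big)=0\), which again needs the countable base (the complement of the support is a countable union of null basic open sets); and a single nonprincipal ultrafilter \(\omega\) on \(\N\), together with a canonical choice of \(x_k\) from a fixed countable dense subset of \(S'\), would suffice in place of the family \(\{\omega_x\}_{x\in B}\), keeping the choice principles used down to the Ultrafilter Lemma.
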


The property of being a strong lifting carries over to the lifting of sections, as we are going to discuss in the following result.
By \(C_b(\X;\B)\) we will mean the Banach space of all continuous maps \(v\colon\X\to\B\) satisfying
\(\|v\|_{C_b(\X;\B)}\coloneqq\sup_{x\in\X}\|v(x)\|_\B<+\infty\). Observe that it holds \(C_b(\X;\B)\subset\mathcal L^\infty(\mu;\B)\)
and that \(\|v\|_{\mathcal L^\infty(\mu;\B)}\leq\|v\|_{C_b(\X;\B)}\) for every \(v\in C_b(\X;\B)\).
\begin{proposition}
Let \((\X,\Sigma,\mu)\) be a complete, \(\sigma\)-finite measure space and \(\tau\) a Polish topology on \(\X\) with \(\tau\subset\Sigma\).
Let \(\ell\) be a strong lifting of \(\mu\) and \(\B\) a Banach space. Then any lifting operator \(\ell\colon L^\infty(\mu;\B)\to\mathcal L^\infty(\mu;\B'')\)
as in Theorem \ref{thm:lift_sects} satisfies the following property:
\[
\ell(\pi_\mu(v))(x)=J_\B(v(x)),\quad\text{ for every }v\in C_b(\X;\B)\text{ and }x\in{\rm spt}(\mu).
\]
\end{proposition}
\begin{proof}
Let \(v\in C_b(\X;\B)\) and \(x\in{\rm spt}(\mu)\) be fixed. Set \(\bar v\coloneqq v(x)\) for brevity.
Since \(\|v(\cdot)-\bar v\|_\B\) is a continuous function, one has that
\[\begin{split}
\big\|\ell(\pi_\mu(v))(x)-J_\B(\bar v)\big\|_{\B''}&\overset{\eqref{eq:lift_sects_claim_extra}}=
\big\|\ell(\pi_\mu(v))(x)-\ell(\1_\X^\mu\bar v)(x)\big\|_{\B''}\\
&\overset{\phantom{\eqref{eq:lift_sects_claim_extra}}}=\big\|\ell(\pi_\mu(v)-\1_\X^\mu\bar v)(x)\big\|_{\B''}
\overset{\eqref{eq:lift_sects_claim4}}\leq\ell\big(\|\pi_\mu(v)(\cdot)-\bar v\|_\B\big)(x)\\
&\overset{\phantom{\eqref{eq:lift_sects_claim_extra}}}=\ell\big(\pi_\mu\big(\|v(\cdot)-\bar v\|_\B\big)\big)(x)
=\|v(x)-\bar v\|_\B=0.
\end{split}\]
This shows the validity of the statement.
\end{proof}
\subsection{Disintegration of RNP-valued measures of bounded variation}\label{ss:disint}
We begin with the relevant definition of the disintegration of a vector-valued measure of bounded variation.
\begin{definition}[Disintegration of a vector-valued measure]\label{def:disint}
Let \((\X,\Sigma_\X)\) and \((\Y,\Sigma_\Y)\) be two measurable spaces. Let \(\varphi\colon\X\to\Y\) be a measurable map. Let \(\B\) be
a Banach space and \(\Omega\) a \(\B\)-valued measure on \((\X,\Sigma_\X)\) having bounded variation. Then we say that a family
\(\{\Omega_y\}_{y\in\Y}\) is a \textbf{disintegration of \(\Omega\) along \(\varphi\)} provided the following conditions are verified:
\begin{itemize}
\item[\(\rm i)\)] Each \(\Omega_y\) is a \(\B\)-valued measure on \((\X,\Sigma_\X)\) with bounded variation.
Moreover, it holds that \(\|\Omega_y\|_\B(\X)=1\), \(\{y\}\in\Sigma_\Y\), and \(\|\Omega_y\|_\B\big(\X\setminus\varphi^{-1}(\{y\})\big)=0\)
for \(\varphi_\#\|\Omega\|_\B\)-a.e.\ \(y\in\Y\).
\item[\(\rm ii)\)] Given any function \(f\in\mathcal L^\infty(\Sigma_\X)\), the mapping \(\Y\ni y\mapsto\int f\,\d\Omega_y\in\B\)
is strongly measurable, where the domain \((\Y,\Sigma_\Y)\) is endowed with the pushforward measure \(\varphi_\#\|\Omega\|_\B\).
\item[\(\rm iii)\)] It holds that
\begin{equation}\label{eq:def_disint}
\int f\,\d\Omega=\int\!\!\!\int f\,\d\Omega_y\,\d(\varphi_\#\|\Omega\|_\B)(y),\quad\text{ for every }f\in\mathcal L^\infty(\Sigma_\X).
\end{equation}
\end{itemize}
\end{definition}

Observe that \(\big\|\int f\,\d\Omega_y\big\|_\B\leq\|f\|_{\mathcal L^\infty(\Sigma_\X)}\) for \(\varphi_\#\|\Omega\|_\B\)-a.e.\ \(y\in\Y\),
thus \(\Y\ni y\mapsto\big\|\int f\,\d\Omega_y\big\|_\B\) belongs to \(\mathcal L^1(\varphi_\#\|\Omega\|_\B)\)
and accordingly the right-hand side of \eqref{eq:def_disint} is well-posed.
\begin{remark}{\rm
By virtue of the Monotone Class Theorem, the requirements ii) and iii) in Definition \ref{def:disint} can be replaced by the following two conditions:
\begin{itemize}
\item[\(\rm ii')\)] Given any set \(E\in\Sigma_\X\), the mapping \(\Y\ni y\mapsto\Omega_y(E)\in\B\) is strongly measurable,
where the domain \((\Y,\Sigma_\Y)\) is endowed with the pushforward measure \(\varphi_\#\|\Omega\|_\B\).
\item[\(\rm iii')\)] It holds that
\begin{equation}\label{eq:def_disint_sets}
\Omega(E)=\int\Omega_y(E)\,\d(\varphi_\#\|\Omega\|_\B)(y),\quad\text{ for every }E\in\Sigma_\X.
\end{equation}
\end{itemize}
Moreover, in the case where \(\X\) is a Polish space and \(\Sigma_\X=\mathscr B(\X)\), one can equivalently require that the mapping
\(\Y\ni y\mapsto\int f\,\d\Omega_y\in\B\) is strongly measurable for every \(f\in C_b(\X)\) and that the defining identity
\eqref{eq:def_disint} is satisfied for every \(f\in C_b(\X)\).
\fr}\end{remark}

Whenever they exist, disintegrations are (essentially) unique; see Lemma \ref{lem:uniq_disint} below.
Before passing to the verification of this statement, let us recall a useful technical fact:
\begin{remark}\label{rmk:good_family_approx}{\rm
Let \((\X,\tau)\) be a Polish space. Let \(\B\) be a Banach space and \(\Omega\) a
\(\B\)-valued measure on \(\big(\X,\mathscr B(\X)\big)\) having bounded variation. Fix
any countable, dense subset \(D\) of \(\X\). Define \(\mathcal U\) as the countable family
of all finite unions of open balls in \(\X\) having centre in \(D\) and rational radius.
Then we claim that for any \(E\in\mathscr B(\X)\) there exists
\((U_n)_{n\in\N}\subset\mathcal U\) such that
\[
\big\|\Omega(U_n)-\Omega(E)\big\|_\B\to 0,\quad\text{ as }n\to\infty.
\]
Indeed, for any Borel set \(E\subset\X\) we may estimate
\[\begin{split}
\inf_{U\in\mathcal U}\big\|\Omega(U)-\Omega(E)\big\|_\B
&=\inf_{U\in\mathcal U}\big\|\Omega(U\setminus E)-\Omega(E\setminus U)\big\|_\B\leq
\inf_{U\in\mathcal U}\big(\|\Omega(U\setminus E)\|_\B+\|\Omega(E\setminus U)\|_\B\big)\\
&\leq\inf_{U\in\mathcal U}\big(\|\Omega\|_\B(U\setminus E)+\|\Omega\|_\B(E\setminus U)\big)
=\inf_{U\in\mathcal U}\|\Omega\|_\B(U\Delta E)\overset{\star}=0,
\end{split}\]
where the starred identity follows from the regularity of the Borel measure \(\|\Omega\|_\B\).
\fr}\end{remark}
\begin{lemma}[Uniqueness of the disintegration]\label{lem:uniq_disint}
Let \((\X,\tau_\X)\), \((\Y,\tau_\Y)\) be Polish spaces and let \(\varphi\colon\X\to\Y\)
be a Borel measurable map. Let \(\B\) be a Banach space and let \(\Omega\) be a
\(\B\)-valued measure on \(\big(\X,\mathscr B(\X)\big)\) having bounded variation.
Suppose that \(\{\Omega_y\}_{y\in\Y}\) and \(\{\Omega'_y\}_{y\in\Y}\) are disintegrations
of \(\Omega\) along \(\varphi\). Then it holds that \(\Omega_y=\Omega'_y\) for
\(\varphi_\#\|\Omega\|_\B\)-a.e.\ \(y\in\Y\).
\end{lemma}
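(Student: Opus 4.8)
The plan is to first use the concentration property i) of Definition \ref{def:disint} to rewrite the defining identity of both disintegrations in a ``localised'' form, and then to reduce the matter to the standard fact that an \(\mathcal L^1(\nu;\B)\)-map whose integral vanishes on every measurable set must be \(\nu\)-a.e.\ zero. Throughout I would write \(\nu\coloneqq\varphi_\#\|\Omega\|_\B\), which is a finite measure.

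First I would fix a Borel set \(E\in\mathscr B(\X)\). Using that, for \(\nu\)-a.e.\ \(y\), the variation \(\|\Omega_y\|_\B\) is concentrated on \(\varphi^{-1}(\{y\})\), I would show that \(\Omega_y\big(E\cap\varphi^{-1}(F)\big)=\1_F(y)\,\Omega_y(E)\) holds for \(\nu\)-a.e.\ \(y\) and every fixed \(F\in\mathscr B(\Y)\). Indeed \(\Omega_y(A)=\Omega_y\big(A\cap\varphi^{-1}(\{y\})\big)\) for any Borel \(A\), since \(\|\Omega_y(A\setminus\varphi^{-1}(\{y\}))\|_\B\leq\|\Omega_y\|_\B(\X\setminus\varphi^{-1}(\{y\}))=0\); one then distinguishes the cases \(y\in F\), where \(\varphi^{-1}(\{y\})\subset\varphi^{-1}(F)\), and \(y\notin F\), where \(\varphi^{-1}(\{y\})\cap\varphi^{-1}(F)=\varnothing\). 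Plugging this into the set-version iii') of the defining identity (valid by the Monotone Class Theorem) yields \(\int_F\Omega_y(E)\,\d\nu(y)=\Omega\big(E\cap\varphi^{-1}(F)\big)=\int_F\Omega'_y(E)\,\d\nu(y)\) for every \(F\in\mathscr B(\Y)\). Since \(y\mapsto\Omega_y(E)\) and \(y\mapsto\Omega'_y(E)\) are strongly measurable by ii') and bounded in norm by \(\|\Omega_y\|_\B(\X)=1\), hence lie in \(\mathcal L^1(\nu;\B)\), the equality of their integrals over every Borel \(F\) forces \(\Omega_y(E)=\Omega'_y(E)\) for \(\nu\)-a.e.\ \(y\).

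The remaining difficulty -- and the point where the argument is most delicate -- is to upgrade this ``for each \(E\), for a.e.\ \(y\)'' conclusion into the desired ``for a.e.\ \(y\), for every \(E\)'', as a priori the exceptional null set depends on \(E\) and there are uncountably many Borel sets. Here I would invoke the countable family \(\mathcal U\) from Remark \ref{rmk:good_family_approx}: applying the previous step to each \(U\in\mathcal U\) and taking the union of the countably many exceptional null sets produces a single \(\nu\)-null set \(N\) such that \(\Omega_y(U)=\Omega'_y(U)\) for all \(U\in\mathcal U\), whenever \(y\notin N\).

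Finally, I would fix \(y\notin N\) and promote agreement on \(\mathcal U\) to agreement on all of \(\mathscr B(\X)\) by a regularity argument in the spirit of Remark \ref{rmk:good_family_approx}, but carried out for the finite measure \(\sigma_y\coloneqq\|\Omega_y\|_\B+\|\Omega'_y\|_\B\). Given \(E\in\mathscr B(\X)\) and \(\varepsilon>0\), outer regularity of \(\sigma_y\) together with the fact that \(\mathcal U\) is a base of the topology (closed under finite unions) lets me find \(U\in\mathcal U\) with \(\sigma_y(U\Delta E)<\varepsilon\); then the triangle inequality \(\|\Omega_y(E)-\Omega'_y(E)\|_\B\leq\|\Omega_y\|_\B(U\Delta E)+\|\Omega_y(U)-\Omega'_y(U)\|_\B+\|\Omega'_y\|_\B(U\Delta E)\), whose middle term vanishes by the choice of \(N\), gives \(\|\Omega_y(E)-\Omega'_y(E)\|_\B\leq 2\varepsilon\). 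Letting \(\varepsilon\searrow 0\) yields \(\Omega_y(E)=\Omega'_y(E)\) for every \(E\in\mathscr B(\X)\), that is \(\Omega_y=\Omega'_y\), for every \(y\notin N\), as desired.
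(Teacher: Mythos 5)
Your proof is correct, and it takes a genuinely different route from the paper's. The paper argues by contradiction: assuming the disintegrations differ on a set of positive \(\varphi_\#\|\Omega\|_\B\)-measure, it uses the countable family \(\mathcal U\) of Remark \ref{rmk:good_family_approx} to fix a single \(U\in\mathcal U\) and a positive-measure set \(P_3\) where \(\|\Omega_y(U)-\Omega'_y(U)\|_\B\geq\lambda\), then pushes the pair \((\Omega_y(U),\Omega'_y(U))\) forward to a measure on \({\rm Z}\times{\rm Z}\) for a separable \({\rm Z}\subset\B\), picks an off-diagonal point of its support, and shows that the single vector \(\Omega(U\cap\varphi^{-1}(P))/\varphi_\#\|\Omega\|_\B(P)\) must lie in two disjoint balls. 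Your argument is direct and more modular: you isolate the localisation identity \(\Omega_y(E\cap\varphi^{-1}(F))=\1_F(y)\,\Omega_y(E)\) (which the paper also uses, implicitly, in its displayed computation), reduce to the standard uniqueness of Bochner densities (a map in \(\mathcal L^1(\nu;\B)\) with vanishing integrals over all measurable sets is \(\nu\)-a.e.\ zero, provable via a countable norming family for the separable essential range), and then pass from the countable family \(\mathcal U\) to all Borel sets by regularity of the finite measure \(\|\Omega_y\|_\B+\|\Omega'_y\|_\B\) -- exactly the mechanism of Remark \ref{rmk:good_family_approx} applied fibrewise. Both proofs rest on the same two pillars (the concentration property~i) and the countable generating family \(\mathcal U\)); yours trades the paper's self-contained support/disjoint-balls argument for a citation of the density-uniqueness fact, yielding a cleaner quantifier exchange at the cost of invoking that (entirely standard) external lemma.
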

\begin{proof}
We argue by contradiction: suppose there exists a Borel set \(P_1\subset\Y\) such that
\(\Omega_y\neq\Omega'_y\) for every \(y\in P_1\) and \(\varphi_\#\|\Omega\|_\B(P_1)>0\).
Letting \(\mathcal U\) be as in Remark \ref{rmk:good_family_approx}, we can find
\(U\in\mathcal U\) and \(P_2\subset P_1\) Borel such that \(\Omega_y(U)\neq\Omega'_y(U)\)
for every \(y\in P_2\) and \(\varphi_\#\|\Omega\|_\B(P_2)>0\). Hence, we can find
\(\lambda>0\) and a Borel set \(P_3\subset P_2\) such that
\(\big\|\Omega_y(U)-\Omega'_y(U)\big\|_\B\geq\lambda\) for every \(y\in P_3\) and
\(\varphi_\#\|\Omega\|_\B(P_3)>0\). For brevity, we set \(v(y)\coloneqq\Omega_y(U)\)
and \(v'(y)\coloneqq\Omega'_y(U)\) for every \(y\in P_3\). Since \(v\) and \(v'\) are
strongly measurable, we can find a closed, separable subset \({\rm Z}\) of \(\B\) and
a Borel set \(N\subset P_3\) such that \(\varphi_\#\|\Omega\|_\B(N)=0\) and
\(v(y),v'(y)\in{\rm Z}\) for every \(y\in P_3\setminus N\). Then consider the finite Borel
measure \(\mu\coloneqq(v,v')_\#\big({(\varphi_\#\|\Omega\|_\B)|}_{P_3\setminus N}\big)\) on
\({\rm Z}\times{\rm Z}\). Given that \({\rm spt}(\mu)\) is not contained in the diagonal
\(\{(z,z)\,:\,z\in{\rm Z}\}\), we can find distinct points \(z,z'\in{\rm Z}\) such that
\((z,z')\in{\rm spt}(\mu)\). Choose \(r>0\) so that \(\|z-z'\|_\B>2r\) and \(r<\lambda/2\).
Call \(B\) (resp.\ \(B'\)) the closed ball in \(\rm Z\) of centre \(z\) (resp.\ \(z'\))
and radius \(r\). Then \(B\cap B'=\varnothing\) and \(\mu(B\times B')>0\), thus
\[
P\coloneqq\big\{y\in P_3\setminus N\;\big|\;v(y)\in B,\,v'(y)\in B'\big\}
\]
is a Borel set having positive \(\varphi_\#\|\Omega\|_\B\)-measure.
Define \(E\coloneqq U\cap\varphi^{-1}(P)\). Therefore, one has
\[\begin{split}
\bigg\|\frac{\Omega(E)}{\varphi_\#\|\Omega\|_\B(P)}-z\bigg\|_\B
&=\bigg\|\frac{1}{\varphi_\#\|\Omega\|_\B(P)}\int\Omega_y\big(U\cap\varphi^{-1}(P)\big)
\,\d(\varphi_\#\|\Omega\|_\B)(y)-z\bigg\|_\B\\
&=\bigg\|\frac{1}{\varphi_\#\|\Omega\|_\B(P)}\int\Omega_y\big(U\cap\varphi^{-1}(\{y\}\cap P)\big)
\,\d(\varphi_\#\|\Omega\|_\B)(y)-z\bigg\|_\B\\
&=\bigg\|\fint_P\Omega_y(U)\,\d(\varphi_\#\|\Omega\|_\B)(y)-z\bigg\|_\B
=\bigg\|\fint_P\big(v(y)-z\big)\,\d(\varphi_\#\|\Omega\|_\B)(y)\bigg\|_\B\\
&\leq\fint_P\|v(y)-z\|_\B\,\d(\varphi_\#\|\Omega\|_\B)(y)\leq r,
\end{split}\]
which implies that \(\Omega(E)/(\varphi_\#\|\Omega\|_\B)(P)\in B\). The same computations,
with \((z',\Omega'_y)\) in place of \((z,\Omega_y)\), show that also
\(\Omega(E)/(\varphi_\#\|\Omega\|_\B)(P)\in B'\), thus leading to a contradiction.
\end{proof}

This section ends with an existence result, see Theorem \ref{thm:disint_vect_meas}. First of all, an auxiliary fact:
\begin{remark}\label{rmk:local_constr_disint}{\rm
We point out that disintegrations can be obtained by `patching together' local disintegrations, as we are going
to describe. Let \((\X_n)_{n\in\N}\subset\Sigma_\X\) be a family of pairwise disjoint subsets of \(\X\) with
\(\|\Omega\|_\B\big(\X\setminus\bigcup_n\X_n\big)=0\). Suppose to have a disintegration \(\{\Omega^n_y\}_{y\in\Y}\)
of \(\Omega|_{\X_n}\) along \(\varphi|_{\X_n}\) for every \(n\in\N\). Fix a measurable version \(\alpha_n\colon\Y\to[0,+\infty)\)
of \(\frac{\d\varphi_\#{\|{\Omega|}_{\X_n}\|_\B}}{\d\varphi_\#\|\Omega\|_\B}\). Set
\[
\Omega_y(E)\coloneqq\sum_{n\in\N}\alpha_n(y)\Omega^n_y(E)\in\B,\quad\text{ for every }y\in\Y\text{ and }E\in\Sigma_\X.
\]
It is straightforward to check that \(\{\Omega_y\}_{y\in\Y}\)
is a disintegration of \(\Omega\) along \(\varphi\). For instance,
\[\begin{split}
\Omega(E)&=\sum_{n\in\N}{\Omega|}_{\X_n}(E)=\sum_{n\in\N}\int
\Omega^n_y(E)\,\d(\varphi_\#\|{\Omega|}_{\X_n}\|_\B)(y)=\sum_{n\in\N}
\int\alpha_n(y)\Omega^n_y(E)\,\d(\varphi_\#\|\Omega\|_\B)(y)\\
&=\int\Omega_y(E)\,\d(\varphi_\#\|\Omega\|_\B)(y),\quad
\text{ for every }E\in\Sigma_\X,
\end{split}\]
where the last identity can be justified by using the Dominated
Convergence Theorem.
\fr}\end{remark}

In the case of non-negative measures, a proof (via strong liftings) of the Disintegration Theorem can be found in
\cite{IonescuTulcea65,Hoffmann-Jorgensen71}. In the next result, we adapt their strategy (using Theorem \ref{thm:lift_sects})
to prove existence of the disintegration for RNP-valued measures of bounded variation.
\begin{theorem}[Existence of disintegration for vector-valued measures]\label{thm:disint_vect_meas}
Let \((\X,\tau_\X)\), \((\Y,\tau_\Y)\) be Polish spaces
and \(\varphi\colon\X\to\Y\) a Borel measurable map. Let \(\B\)
be a Banach space having the Radon--Nikod\'{y}m property and
\(\Omega\) a \(\B\)-valued measure on \(\big(\X,\mathscr B(\X)\big)\)
having bounded variation. Then there exists a disintegration
\(\{\Omega_y\}_{y\in\Y}\) of \(\Omega\) along \(\varphi\).
\end{theorem}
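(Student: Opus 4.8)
The plan is to realise the heuristic from the Introduction rigorously, using Theorem \ref{thm:lift_sects} to invert quantifiers. First I would reduce to the case where $\X$ is compact: since $\|\Omega\|_\B$ is a finite Borel measure on a Polish space it is tight, so one can pick compact sets $K_n\uparrow$ with $\|\Omega\|_\B(\X\setminus\bigcup_n K_n)=0$, disintegrate each $\Omega|_{\X_n}$ (with $\X_n=K_n\setminus K_{n-1}$, viewed inside the compact space $K_n$) and patch the pieces together via Remark \ref{rmk:local_constr_disint}. So assume $\X$ compact. Set $\nu\coloneqq\varphi_\#\|\Omega\|_\B$, complete it, and fix a \emph{strong} lifting $\ell$ of $\nu$ (Theorem \ref{thm:exist_strong_lift}); applying Theorem \ref{thm:lift_sects} to the constant bundle $\B$ yields $\ell\colon L^\infty(\nu;\B)\to\mathcal L^\infty(\nu;\B'')$. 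For $f\in\mathcal L^\infty(\mathscr B(\X))$ the measure $f\Omega$ has bounded variation $|f|\|\Omega\|_\B$ by \eqref{eq:tv_restr}, so $\varphi_\#(f\Omega)$ has bounded variation, $\varphi_\#(f\Omega)\ll\nu$, and (using \eqref{eq:tv_pushfrwd} and that $\B$ has the RNP with respect to $\nu$) the derivative $g_f\coloneqq\frac{\d\varphi_\#(f\Omega)}{\d\nu}$ exists; by \eqref{eq:key_ineq_density} it satisfies $\|g_f(\cdot)\|_\B\le\tau_{|f|}$ in $L^\infty(\nu)$, where $\tau_f\coloneqq\frac{\d\varphi_\#(f\|\Omega\|_\B)}{\d\nu}$ and $\tau_1=1$.

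Next I would build the scalar skeleton. For each $y\in\Y$ the functional $C(\X)\ni f\mapsto\ell(\tau_f)(y)\in\R$ is linear, positive and dominated by $\|f\|_{C(\X)}$, so by the Riesz representation theorem on the compact space $\X$ it is $\int f\,\d\sigma_y$ for a Borel probability measure $\sigma_y$ on $\X$ (note $\sigma_y(\X)=\ell(1)(y)=1$). Since $\ell(\tau_f)$ represents $\tau_f$, one checks $\int\!\!\int f\,\d\sigma_y\,\d\nu(y)=\int f\,\d\|\Omega\|_\B$, i.e. $\{\sigma_y\}$ disintegrates $\|\Omega\|_\B$. Concentration on fibres is where the \emph{strong} lifting enters: for $h\in C_b(\Y)$ one has $\tau_{h\circ\varphi}=h$ in $L^\infty(\nu)$, hence (by an $L^1(\|\Omega\|_\B)$-approximation of the bounded Borel function $h\circ\varphi$ by elements of $C(\X)$) $\int h\circ\varphi\,\d\sigma_y=\ell(h)(y)=h(y)$ on ${\rm spt}(\nu)$, using the characterisation of strong liftings following Definition \ref{def:strong_lift}; a countable determining family of such $h$ forces $\varphi_\#\sigma_y=\delta_y$, so $\sigma_y$ is concentrated on $\varphi^{-1}(\{y\})$ for $\nu$-a.e.\ $y$. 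Now I define, for $f\in C(\X)$, the linear map $L_y(f)\coloneqq\ell(g_f)(y)\in\B''$; by \eqref{eq:lift_sects_claim4} and the bound above, $\|L_y(f)\|_{\B''}\le\ell(\tau_{|f|})(y)=\int|f|\,\d\sigma_y$, so $L_y$ extends to a bounded operator $L^1(\sigma_y)\to\B''$, and $\Omega_y(E)\coloneqq L_y(\1_E)$ is a countably additive $\B''$-valued measure with $\|\Omega_y\|_{\B''}\le\sigma_y$.

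The core point is to show that $\Omega_y$ is genuinely $\B$-valued for $\nu$-a.e.\ $y$, and this is the step I expect to be the main obstacle, because $\ell$ is not continuous for the $L^1(\nu)$-topology in which the relevant approximations live. I would circumvent this as follows. For a \emph{fixed} $f\in\mathcal L^\infty(\mathscr B(\X))$, approximate it by $f_k\in C(\X)$ with $f_k\to f$ in $L^1(\|\Omega\|_\B)$ and $\|f_k\|_\infty\le\|f\|_\infty$; then $L_\cdot(f_k)\to L_\cdot(f)$ in $L^1(\nu;\B'')$ (via the domination by $\sigma_y$ and $\int\sigma_y\,\d\nu=\|\Omega\|_\B$), while $g_{f_k}\to g_f$ in $L^1(\nu;\B)$, and \eqref{eq:lift_sects_claim1} gives $L_y(f_k)=\ell(g_{f_k})(y)=J_\B(g_{f_k}(y))$ for $\nu$-a.e.\ $y$. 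Passing to the limit yields $L_y(f)=J_\B(g_f(y))\in J_\B(\B)$ for $\nu$-a.e.\ $y$. Applying this to the countable family $\mathcal U$ of Remark \ref{rmk:good_family_approx} produces a $\nu$-full set on which $\Omega_y(U)\in J_\B(\B)$ for all $U\in\mathcal U$; since $J_\B(\B)$ is closed and, by regularity of $\sigma_y$, every Borel $E$ is approximated by some $U_n\in\mathcal U$ with $\sigma_y(U_n\triangle E)\to0$, the estimate $\|\Omega_y(U_n)-\Omega_y(E)\|_{\B''}\le\sigma_y(U_n\triangle E)$ forces $\Omega_y(E)\in J_\B(\B)$ for every $E$. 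Thus $\Omega_y$ is $\B$-valued of bounded variation with $\|\Omega_y\|_\B\le\sigma_y$.

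It remains to verify the defining identity, measurability and the normalisation in Definition \ref{def:disint}. The computation $\int_\Y\big(\int f\,\d\Omega_y\big)\d\nu(y)=\int_\Y J_\B(g_f(y))\,\d\nu(y)=J_\B\big(\varphi_\#(f\Omega)(\Y)\big)=\int f\,\d\Omega$ gives \eqref{eq:def_disint} (hence \eqref{eq:def_disint_sets}) for all $f\in\mathcal L^\infty(\mathscr B(\X))$, and $y\mapsto\int f\,\d\Omega_y=J_\B(g_f(y))$ is strongly measurable because $g_f$ is. Finally, for the normalisation $\|\Omega_y\|_\B(\X)=1$, integrating partitions in \eqref{eq:def_disint_sets} gives $\|\Omega\|_\B\le\int\|\Omega_y\|_\B\,\d\nu$ as measures, whence $\|\Omega\|_\B\le\int\|\Omega_y\|_\B\,\d\nu\le\int\sigma_y\,\d\nu=\|\Omega\|_\B$; equality and $\|\Omega_y\|_\B\le\sigma_y$ force $\|\Omega_y\|_\B=\sigma_y$ for $\nu$-a.e.\ $y$, so $\|\Omega_y\|_\B(\X)=1$ and $\Omega_y$ is concentrated on $\varphi^{-1}(\{y\})$, completing the construction.
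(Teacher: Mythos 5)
Your proposal is correct and shares the paper's overall architecture (reduce to \(\X\) compact, fix a strong lifting of \(\nu\coloneqq\varphi_\#\|\Omega\|_\B\), apply Theorem \ref{thm:lift_sects} to \(f\mapsto\frac{\d\varphi_\#(f\Omega)}{\d\nu}\), and control everything with the scalar measure \(\sigma_y\) obtained from \(f\mapsto\ell(\tau_f)(y)\) via Riesz--Markov), but it replaces the two pieces of heavier machinery in the paper's proof by approximation arguments. First, to produce \(\Omega_y\) the paper shows \(T_y(f)\in\B\) for all \(f\in C(\X)\) (countable dense subset of \(C(\X)\) plus norm continuity of \(T_y\)) and then invokes the weak compactness of \(T_y\colon C(\X)\to\B\) -- a second use of the RNP, through the absence of copies of \(c_0\) (Remark \ref{rmk:RNP_no_copy_c_0}) -- together with the Bartle--Dunford--Schwartz representation of weakly compact operators; you instead extend \(L_y\) to \(L^1(\sigma_y)\) by the domination \(\|L_y(f)\|_{\B''}\le\int|f|\,\d\sigma_y\), set \(\Omega_y(E)\coloneqq L_y(\1_E)\) (which gives countable additivity and \(\|\Omega_y\|\le\sigma_y\) for free), and prove \(\B\)-valuedness at the level of Borel sets via the countable family \(\mathcal U\) of Remark \ref{rmk:good_family_approx}. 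This avoids the operator-representation theorems and uses the RNP only for the existence of the derivatives \(g_f\). Second, for concentration on fibres the paper tests against the \(y\)-dependent continuous functions \(\bar g_y\circ\varphi\) and exploits the strongness of \(\ell\) at the single point \(y\), which is why it must first run Lusin's theorem to make \(\varphi\) continuous on the compact pieces; you test \(\varphi_\#\sigma_y\) against a countable, \(y\)-independent family of \(h\in C_b(\Y)\) and only need \(\int h\circ\varphi\,\d\sigma_y=h(y)\) for \(\nu\)-a.e.\ \(y\), so you need neither Lusin nor, in fact, the strongness of the lifting. The one overclaim in your write-up is that this identity holds \emph{everywhere} on \({\rm spt}(\nu)\): since \(\ell\) is not continuous for the \(L^1(\nu)\)-topology, the \(L^1(\|\Omega\|_\B)\)-approximation only yields it for \(\nu\)-a.e.\ \(y\) (passing to a.e.-convergent subsequences of \(\int f_k\,\d\sigma_y=\ell(\tau_{f_k})(y)=\tau_{f_k}(y)\)); but the a.e.\ statement is all your countable-family argument requires, so this does not affect the proof. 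The normalisation step coincides with the paper's. Net effect: your route is longer on routine approximation but lighter on vector-measure machinery, and it isolates more sharply where the RNP is genuinely indispensable.
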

\begin{proof}
Thanks to the inner regularity of \(\|\Omega\|_\B\) and to Lusin's Theorem, we can find a countable family
\((K_n)_{n\in\N}\) of pairwise disjoint compact sets in \(\X\) such that \(\varphi|_{K_n}\colon K_n\to\Y\)
is continuous for every \(n\in\N\) and \(\|\Omega\|_\B\big(\X\setminus\bigcup_n K_n\big)=0\). In view of Remark
\ref{rmk:local_constr_disint}, in order to achieve the statement it is sufficient to disintegrate each \(\Omega|_{K_n}\) along
\(\varphi|_{K_n}\). Therefore, we may assume without loss of generality that \(\X\) is compact and \(\varphi\) is continuous.\\
{\color{blue}\textsc{Step 1: Construction of \(\Omega_y\).}}
Let \(\Sigma\) be the completion of \(\mathscr B(\Y)\) under \(\varphi_\#\|\Omega\|_\B\). Fix a strong lifting \(\ell\)
of \(\varphi_\#\|\Omega\|_\B\), which exists by Theorem \ref{thm:exist_strong_lift}. For any \(f\in C(\X)\), it holds that
\begin{equation}\label{eq:disint_vv_aux1}
\|\varphi_\#(f\Omega)\|_\B\overset{\eqref{eq:tv_pushfrwd}}\leq\varphi_\#\|f\Omega\|_\B\overset{\eqref{eq:tv_restr}}=
\varphi_\#(|f|\|\Omega\|_\B)\leq\|f\|_{C(\X)}\varphi_\#\|\Omega\|_\B.
\end{equation}
Hence, we can consider the Radon--Nikod\'{y}m derivative
\(\frac{\d\varphi_\#(f\Omega)}{\d\varphi_\#\|\Omega\|_\B}\in L^\infty(\varphi_\#\|\Omega\|_\B;\B)\).
Consider the lifting \(\ell\colon L^\infty(\varphi_\#\|\Omega\|_\B;\B)\to\mathcal L^\infty(\varphi_\#\|\Omega\|_\B;\B'')\) given by
Theorem \ref{thm:lift_sects}. For \(y\in\Y\), we set
\[
T_y(f)\coloneqq\ell\bigg(\frac{\d\varphi_\#(f\Omega)}{\d\varphi_\#\|\Omega\|_\B}\bigg)(y)\in\B'',\quad\text{ for every }f\in C(\X).
\]
The resulting map \(T_y\colon C(\X)\to\B''\) is linear by construction. Also, for any \(f\in C(\X)\) one has
\begin{equation}\label{eq:disint_vv_aux1bis}
\|T_y(f)\|_{\B''}\overset{\eqref{eq:lift_sects_claim4}}\leq
\ell\bigg(\bigg\|\frac{\d\varphi_\#(f\Omega)}{\d\varphi_\#\|\Omega\|_\B}(\cdot)\bigg\|_\B\bigg)(y)
\overset{\eqref{eq:key_ineq_density}}\leq\ell\bigg(\frac{\d\|\varphi_\#(f\Omega)\|_\B}{\d\varphi_\#\|\Omega\|_\B}\bigg)(y)
\overset{\eqref{eq:disint_vv_aux1}}\leq\|f\|_{C(\X)}.
\end{equation}
Being \(\big(C(\X),\|\cdot\|_{C(\X)}\big)\) separable, we can pick a countable dense subset \(\mathscr C\) of \(C(\X)\).
Thanks to \eqref{eq:lift_sects_claim1}, we can find \(N\in\Sigma\) with \(\varphi_\#\|\Omega\|_\B(N)=0\) such that
\(T_y(f)\in\B\) for every \(y\in\Y\setminus N\) and \(f\in\mathscr C\), where we identify \(\B\) with its image \(J_\B(\B)\subset\B''\).
Since each \(T_y\) is linear and continuous, we deduce that \(T_y(f)\in\B\) for every \(y\in\Y\setminus N\) and \(f\in C(\X)\).
Now fix \(y\in\Y\setminus N\). Given that the linear continuous operator \(T_y\colon C(\X)\to\B\) is weakly compact by Remark
\ref{rmk:RNP_no_copy_c_0}, we know from \cite[Theorems VI.2.1 and VI.2.5]{DiestelUhl77} (see also \cite[Theorem VI.7.3]{DunfordSchwartz58})
that there exists a bounded \(\B\)-valued measure \(\Omega_y\) on \(\big(\X,\mathscr B(\X)\big)\) such that \(\int f\,\d\Omega_y=T_y(f)\)
for all \(f\in C(\X)\).\\
{\color{blue}\textsc{Step 2: \(\Omega_y\) has bounded variation.}} Given that
\(C(\X)\ni f\mapsto\ell\big(\frac{\d\varphi_\#(f\|\Omega\|_\B)}{\d\varphi_\#\|\Omega\|_\B}\big)(y)\in\R\)
is a linear \(1\)-Lipschitz operator (\emph{i.e.}, it is an element of \(C(\X)'\) with norm at most \(1\)),
by using the Riesz--Markov--Kakutani Representation Theorem we obtain a Borel measure \(\mu_y\geq 0\) on \(\X\)
such that \(\mu_y(\X)\leq 1\) and \(\int f\,\d\mu_y=\ell\big(\frac{\d\varphi_\#(f\|\Omega\|_\B)}{\d\varphi_\#\|\Omega\|_\B}\big)(y)\)
for every \(f\in C(\X)\). The operator
\[
S_y\colon\pi_{\mu_y}(C(\X))\to\B,\qquad S_y(\pi_{\mu_y}(f))\coloneqq T_y(f),\quad\text{ for every }f\in C(\X),
\]
is well-defined, is linear, and satisfies \(\big\|S_y(\pi_{\mu_y}(f))\big\|_\B\leq\int|f|\,\d\mu_y\) by \eqref{eq:disint_vv_aux1} and
\eqref{eq:disint_vv_aux1bis}. Hence, since \(\pi_{\mu_y}(C(\X))\) is dense in \(L^1(\mu_y)\), we can uniquely extend \(S_y\) to a
linear and continuous operator \(S_y\colon L^1(\mu_y)\to\B\). By using \cite[Theorem VI.3.3 and Corollary VI.3.7]{DiestelUhl77},
we deduce that the \(\B\)-valued measure \(\Omega_y\) has bounded variation. We also claim that
\begin{equation}\label{eq:disint_vv_aux2}
\|\Omega_y\|_\B(E)\leq\mu_y(E),\quad\text{ for every }E\in\mathscr B(\X).
\end{equation}
In view of Remark \ref{rmk:char_variation}, to prove \eqref{eq:disint_vv_aux2} amounts to checking that \(\|\Omega_y(E)\|_\B\leq\mu_y(E)\)
holds for all \(E\in\mathscr B(\X)\). To this aim, pick a sequence \((f_n)_{n\in\N}\subset C(\X)\) such that \(f_n\geq 0\) for all
\(n\in\N\) and \(\lim_n\int|f_n-\1_E|\,\d(\|\Omega_y\|_\B+\mu_y)=0\). Since \(\big\|\int f_n\,\d\Omega_y\big\|_\B\leq\int f_n\,\d\mu_y\)
for every \(n\in\N\) and
\[
\bigg\|\int f_n\,\d\Omega_y-\Omega_y(E)\bigg\|_\B\leq\int|f_n-\1_E|\,\d\|\Omega_y\|_\B\to 0,\quad\text{ as }n\to\infty,
\]
we conclude that \(\|\Omega_y(E)\|_\B=\lim_{n\to\infty}\big\|\int f_n\,\d\Omega_y\big\|_\B\leq\lim_{n\to\infty}\int f_n\,\d\mu_y=\mu_y(E)\), as desired.\\
{\color{blue}\textsc{Step 3: \(\Omega_y\) is concentrated on \(\varphi^{-1}(\{y\})\).}}
Next we show that \(\|\Omega_y\|_\B\big(\X\setminus\varphi^{-1}(\{y\})\big)=0\) for \(\varphi_\#\|\Omega\|_\B\)-a.e.\ \(y\in\Y\);
the fact that the von Neumann lifting \(\ell\) is strong comes into play only now. Fix any point \(y\in{\rm spt}(\varphi_\#\|\Omega\|_\B)\setminus N\)
and a complete, separable distance \(\sfd_\Y\) on \(\Y\) that induces \(\tau_\Y\).
Define \(\bar g_y\coloneqq\max\{1-\sfd_\Y(y,\cdot),0\}\in C_b(\Y)\) and
\(g_y\coloneqq\pi_{\varphi_\#\|\Omega\|_\B}(\bar g_y)\in L^\infty(\varphi_\#\|\Omega\|_\B)\). Notice that \(\bar g_y<1\) on \(\Y\setminus\{y\}\)
and \(\ell(g_y)(y)=1\). Then, since \(\bar g_y\circ\varphi\in C(\X)\), we can compute
\[\begin{split}
\int(\bar g_y\circ\varphi)f\,\d\mu_y&=\ell\bigg(\frac{\d\varphi_\#\big((\bar g_y\circ\varphi)f\|\Omega\|_\B\big)}{\d\varphi_\#\|\Omega\|_\B}\bigg)(y)
=\ell\bigg(\frac{\d\varphi_\#(f\|\Omega\|_\B)}{\d\varphi_\#\|\Omega\|_\B}\,g_y\bigg)(y)\\
&=\ell(g_y)(y)\int f\,\d\mu_y=\int f\,\d\mu_y,\quad\text{ for every }f\in C(\X).
\end{split}\]
Hence, \(\bar g_y\circ\varphi=1\) holds \(\mu_y\)-a.e., which forces
\(\|\Omega_y\|_\B\big(\X\setminus\varphi^{-1}(\{y\})\big)\leq\mu_y\big(\X\setminus\varphi^{-1}(\{y\})\big)=0\).
\\
{\color{blue}\textsc{Step 4: \(\{\Omega_y\}_{y\in\Y}\) is the disintegration of \(\Omega\).}}
In order to conclude, it remains to prove that \(\{\Omega_y\}_{y\in\Y}\) is the disintegration of \(\Omega\) along \(\varphi\).
The map \(\Y\ni y\mapsto\int f\,\d\Omega_y\in\B\) is strongly measurable for every \(f\in C(\X)\) by construction.
Moreover, for any \(f\in C(\X)\) we have that
\begin{equation}\label{eq:disint_vv_aux3}\begin{split}
\int f\,\d\Omega&=(f\Omega)(\X)=\varphi_\#(f\Omega)(\Y)=\int\frac{\d\varphi_\#(f\Omega)}{\d\varphi_\#\|\Omega\|_\B}\,\d\varphi_\#\|\Omega\|_\B
=\int T_y(f)\,\d\varphi_\#\|\Omega\|_\B\\
&=\int\!\!\!\int f\,\d\Omega_y\,\d(\varphi_\#\|\Omega\|_\B)(y),
\end{split}\end{equation}
whence \eqref{eq:def_disint} follows.
Finally, we claim that \(\|\Omega_y\|_\B(\X)=1\) for \(\varphi_\#\|\Omega\|_\B\)-a.e.\ \(y\in\Y\). Indeed,
\[\begin{split}
\sum_{i=1}^n\|\Omega(E_i)\|_\B
&=\sum_{i=1}^n
\bigg\|\int\Omega_y(E_i)\,\d(\varphi_\#\|\Omega\|_\B)(y)\bigg\|_\B\leq\sum_{i=1}^n\int\|\Omega_y\|_\B(E_i)\,\d(\varphi_\#\|\Omega\|_\B)(y)\\
&=\int\|\Omega_y\|_\B(\X)\,\d(\varphi_\#\|\Omega\|_\B)(y)\overset{\star}\leq\varphi_\#\|\Omega\|_\B(\Y)=\|\Omega\|_\B(\X),
\end{split}\]
holds whenever \((E_i)_{i=1}^n\subset\mathscr B(\X)\) is a partition of \(\X\); the starred inequality follows from the fact that
\(\|\Omega_y\|_\B(\X)\leq\mu_y(\X)\leq 1\) holds for \(\varphi_\#\|\Omega\|_\B\)-a.e.\ \(y\in\Y\). By passing to the supremum over
\((E_i)_{i=1}^n\), we thus obtain that \(\|\Omega\|_\B(\X)=\int\|\Omega_y\|_\B(\X)\,\d(\varphi_\#\|\Omega\|_\B)(y)\), which in turn
forces the identity \(\|\Omega_y\|_\B(\X)=1\) for \(\varphi_\#\|\Omega\|_\B\)-a.e.\ \(y\in\Y\), as claimed. The proof is complete.
\end{proof}
\appendix
\section{From Lebesgue Differentiation Theorem to liftings}\label{app:Leb_to_lift}
In this appendix, we show how to get the existence of von Neumann liftings in a quite vast class of measure spaces, containing
for instance all Polish spaces. First, we recall the concept of a lower density (Definition \ref{def:lower_density}) and how it
induces a lifting (Lemma \ref{lem:from_lower_dens_to_lift}). Then we show that each differentiation basis verifying the Lebesgue
Differentiation Theorem induces a lower density (Theorem \ref{thm:from_Leb_to_lift}). Finally, we provide (under rather mild
assumptions) a quick way to obtain a lower density verifying the Lebesgue Differentiation Theorem and thus to prove the existence
of von Neumann liftings (Theorem \ref{thm:exist_lift_separable}), which become automatically strong
liftings in the case of measures over a Polish space (Corollary \ref{cor:strong_lifting_on_mms}).
\medskip

We begin with the notion of a lower density, whose definition is taken from \cite[341C]{Fremlin3}:
\begin{definition}[Lower density]\label{def:lower_density}
Let \((\X,\Sigma,\mu)\) be a measure space. Then by a
\textbf{lower density} of \(\mu\) we mean a map
\(\phi\colon\Sigma\to\Sigma\) satisfying the following properties:
\begin{itemize}
\item[\(\rm i)\)] \(\phi(\varnothing)=\varnothing\), \(\phi(\X)=\X\), and
\(\phi(E\cap F)=\phi(E)\cap\phi(F)\) for every \(E,F\in\Sigma\).
\item[\(\rm ii)\)] \(\phi(E)=\phi(F)\) for every \(E,F\in\Sigma\)
such that \(\mu(E\Delta F)=0\).
\item[\(\rm iii)\)] \(\mu\big(E\Delta\phi(E)\big)=0\) for every
\(E\in\Sigma\).
\end{itemize}
\end{definition}
Observe that each von Neumann lifting is a lower density.
Moreover, a lower density of \(\mu\) is a von Neumann lifting of \(\mu\) if and only if
\(\phi(E\cup F)=\phi(E)\cup\phi(F)\) for every \(E,F\in\Sigma\).
\medskip

The following key result states that by suitably `enlarging' the sets
appearing in the range of a lower density of a complete measure
\(\mu\), one can obtain a von Neumann lifting of \(\mu\).
We sketch the proof of this fact, expanding the argument in \cite[page 1136]{StraussMacherasMusial02}.
\begin{lemma}[From lower density to lifting]\label{lem:from_lower_dens_to_lift}
Let \((\X,\Sigma,\mu)\) be a complete measure space and let \(\phi\) be a lower density of \(\mu\).
Then there exists a von Neumann lifting \(\ell\) of \(\mu\) such that
\begin{equation}\label{eq:from_ld_to_lift}
\phi(E)\subset\ell(E)\subset\X\setminus\phi(\X\setminus E),\quad\text{ for every }E\in\Sigma.
\end{equation}
\end{lemma}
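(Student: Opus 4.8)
The plan is to enlarge \(\phi\) to a lifting by a Zorn's Lemma argument, carrying the sandwich condition along as an invariant. First I would introduce the \emph{conjugate} map \(\psi\colon\Sigma\to\Sigma\), \(\psi(E)\coloneqq\X\setminus\phi(\X\setminus E)\), and record its elementary properties: \(\psi\) preserves finite unions (dually to \(\phi\) preserving intersections), \(\mu\big(E\Delta\psi(E)\big)=0\), and \(\phi(E)\subset\psi(E)\) for every \(E\) (the latter because \(\phi(E)\cap\phi(\X\setminus E)=\phi(\varnothing)=\varnothing\)). Thus \eqref{eq:from_ld_to_lift} asks precisely for a lifting \(\ell\) with \(\phi(E)\subset\ell(E)\subset\psi(E)\). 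A convenient reduction is that it suffices to secure the \emph{lower} inclusion \(\phi(E)\subset\ell(E)\) for all \(E\): applying it to \(\X\setminus E\) and using that a lifting is a Boolean homomorphism yields \(\ell(E)=\X\setminus\ell(\X\setminus E)\subset\X\setminus\phi(\X\setminus E)=\psi(E)\).

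Next I would run Zorn's Lemma on the set \(\mathcal P\) of pairs \((\mathscr A,\theta)\), where \(\mathscr A\subset\Sigma\) is a subalgebra closed under the equivalence \(\mu(E\Delta F)=0\) and \(\theta\colon\mathscr A\to\Sigma\) is a Boolean homomorphism satisfying the lifting axioms ii)--iii) of Definition \ref{def:lifting} relative to \(\mathscr A\), together with \(\phi(E)\subset\theta(E)\) for all \(E\in\mathscr A\); the order is extension of the pair. The family \(\mathcal P\) is nonempty (take \(\mathscr A=\{\varnothing,\X\}\)), and unions of chains again lie in \(\mathcal P\), so a maximal element \((\mathscr A_0,\ell_0)\) exists. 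The whole point is then to show \(\mathscr A_0=\Sigma\), for then \(\ell\coloneqq\ell_0\) is the sought lifting.

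The core is the \emph{one-generator extension}: assuming \(A\in\Sigma\setminus\mathscr A_0\), I would extend \(\ell_0\) to the equivalence-closed subalgebra \(\mathscr A_0[A]\) generated by \(\mathscr A_0\) and \(A\), contradicting maximality. Every element of \(\mathscr A_0[A]\) has the form \(G=(A\cap E_1)\cup(E_2\setminus A)\) with \(E_1,E_2\in\mathscr A_0\), and I would set \(\theta(G)\coloneqq\big(C\cap\ell_0(E_1)\big)\cup\big((\X\setminus C)\cap\ell_0(E_2)\big)\) for a single set \(C\in\Sigma\) playing the role of \(\theta(A)\). Well-definedness and the homomorphism property force \(C\) to lie between \(D_1\coloneqq\bigcup\{\ell_0(Q):Q\in\mathscr A_0,\ \mu(Q\setminus A)=0\}\) and \(D_2\coloneqq\bigcap\{\ell_0(F):F\in\mathscr A_0,\ \mu(A\setminus F)=0\}\); one checks \(D_1\subset D_2\), as well as \(\phi(A)\subset D_2\) and \(D_1\subset\psi(A)\), so that \(C\) should also satisfy \(\phi(A)\subset C\subset\psi(A)\). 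Here completeness of \(\mu\) enters decisively: the ``boundary'' set \(M\coloneqq\psi(A)\setminus\phi(A)\) is \(\mu\)-null, whence every subset of \(M\) is measurable; this both makes the candidate \(C\coloneqq\phi(A)\cup(D_1\cap M)\) a member of \(\Sigma\) and grants complete freedom in prescribing \(C\) on \(M\).

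The main obstacle I anticipate is verifying the lower inclusion \(\phi(G)\subset\theta(G)\) for the \emph{mixed} generators \(G=(A\cap E_1)\cup(E_2\setminus A)\). Splitting \(\X=\phi(A)\sqcup M\sqcup\phi(\X\setminus A)\), the contributions off \(M\) are routine: \(\phi(G)\cap\phi(A)=\phi(A\cap E_1)\subset C\cap\ell_0(E_1)\) and \(\phi(G)\cap\phi(\X\setminus A)=\phi(E_2\setminus A)\subset(\X\setminus C)\cap\ell_0(E_2)\), using \(\phi(A)\subset C\subset\psi(A)\) and \(\phi(E_i)\subset\ell_0(E_i)\). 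The delicate point is the behaviour on the null set \(M\), where \(\phi(G)\) need not be empty and where the decision ``\(x\in C\)'' must be made \emph{uniformly in} \(E_1,E_2\), even though \(C\) is fixed while \(G\) varies; this is exactly where the completeness-given freedom on \(M\) must be spent, choosing \(C\cap M\) compatibly with all the resulting constraints. Once this choice is seen to work, \((\mathscr A_0[A],\theta)\in\mathcal P\) strictly extends \((\mathscr A_0,\ell_0)\), the contradiction closes the Zorn argument, and \eqref{eq:from_ld_to_lift} follows.
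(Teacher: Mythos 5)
Your overall architecture (Zorn's Lemma on partial liftings dominating \(\phi\), reduction of \eqref{eq:from_ld_to_lift} to the lower inclusion, the one-generator extension formula \(\theta(G)=(C\cap\ell_0(E_1))\cup((\X\setminus C)\cap\ell_0(E_2))\), and the necessity of \(D_1\subset C\subset D_2\)) is the classical route and those parts are correct. The gap sits exactly where you flag it and then do not close it: the choice of \(C\) on the null set \(M=\psi(A)\setminus\phi(A)\). Your candidate \(C\coloneqq\phi(A)\cup(D_1\cap M)\) does \emph{not} satisfy the invariant \(\phi(G)\subset\theta(G)\) in general. Concretely, take \(\X=[0,1]\) with Lebesgue measure and \(\phi\) the density-point lower density; let \(A\subset(1/4,3/4)\) have density \(3/4\) at the point \(1/2\), set \(E_2\coloneqq(1/4,3/4)\setminus A\), let \(\mathscr A_0\) be the a.e.-closure of \(\{\varnothing,E_2,\X\setminus E_2,\X\}\) with \(\ell_0(E_2)\coloneqq\phi(E_2)\). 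One checks \((\mathscr A_0,\ell_0)\in\mathcal P\) and \(D_1=\varnothing\), so your \(C=\phi(A)\) and \(1/2\notin C\). But \(G\coloneqq(A\cap\X)\cup(E_2\setminus A)=(1/4,3/4)\) has \(1/2\in\phi(G)\), while \(\theta(G)=\phi(A)\cup\big((\X\setminus\phi(A))\cap\phi(E_2)\big)\) misses \(1/2\), since \(1/2\) has density \(3/4\) for \(A\) and \(1/4\) for \(E_2\). What is actually required is to also put into \(C\cap M\) all points of \(\bigcup\big\{\phi\big((A\cap E_1)\cup(E_2\setminus A)\big)\setminus\ell_0(E_2):E_1,E_2\in\mathscr A_0\big\}\), and --- this is the real mathematical content of the step --- to prove that these ``must be in \(C\)'' constraints are consistent with the ``must not be in \(C\)'' constraints coming from \(\bigcup\big\{\phi(G)\setminus\ell_0(E_1)\big\}\) and from \(C\subset D_2\). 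That consistency is not automatic: it follows from monotonicity of \(\phi\) and the inductive hypothesis \(\phi(F)\subset\ell_0(F)\) on \(\mathscr A_0\), e.g.\ from the fact that \(G\) is \(\mu\)-a.e.\ contained in \(E_1\cup E_2\) and in \(E_1\cup(\X\setminus Q)\) whenever \(\mu(Q\setminus A)=0\), whence \(\phi(G)\subset\ell_0(E_1)\cup\ell_0(E_2)\) and \(\phi(G)\cap\ell_0(Q)\subset\ell_0(E_1)\), together with the symmetric inclusions. Your ``once this choice is seen to work'' elides precisely this verification, which is the heart of the extension step.

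For comparison: the paper avoids all of this by a pointwise ultrafilter construction. For each \(x\) it forms the filter \(\mathcal F_x\) on \(\X\) generated by \(\{E\in\Sigma:x\in\phi(E)\}\), extends it to an ultrafilter \(\omega_x\), and sets \(\ell(E)\coloneqq\{x\in\X:E\in\omega_x\}\); the Boolean homomorphism axioms come from the ultrafilter properties, the sandwich \eqref{eq:from_ld_to_lift} is immediate from \(\phi(E)\in\mathcal F_x\) for \(x\in\phi(E)\), and completeness is used only to deduce \(\ell(E)\in\Sigma\) from the sandwich. If you want to keep the Zorn route, the one-generator consistency argument above must be written out; otherwise the shorter ultrafilter argument is preferable.
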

\begin{proof}[Sketch of proof]
Given any \(x\in\X\), it is easy to check that \(\mathcal F_x\) is a filter on \(\X\), where we set
\[
\mathcal F_x\coloneqq\big\{F\in 2^\X\;\big|\;\exists E\in\Sigma:\;x\in\phi(E),\,E\subset F\big\}.
\]
By applying the Ultrafilter Lemma, we obtain an ultrafilter \(\omega_x\) on \(\X\) containing \(\mathcal F_x\).
Then we define the map \(\ell\colon\Sigma\to\Sigma\) as \(\ell(E)\coloneqq\{x\in\X\,:\,E\in\omega_x\}\) for every
\(E\in\Sigma\). For any \(E\in\Sigma\), we have that \(\phi(E)\in\mathcal F_x\subset\omega_x\) holds for every \(x\in\phi(E)\).
Moreover, if \(x\in\ell(E)\), then \(E\in\omega_x\), so that \(\X\setminus E\notin\omega_x\) and accordingly
\(\X\setminus E\notin\mathcal F_x\), which implies that \(x\notin\phi(\X\setminus E)\). This shows that
\(\phi(E)\subset\ell(E)\subset\X\setminus\phi(\X\setminus E)\), whence \eqref{eq:from_ld_to_lift} follows.
Let us check that \(\ell\) is a von Neumann lifting of \(\mu\). If \(N\in\Sigma\) satisfies \(\mu(N)=0\), then
\(\mu\big((\X\setminus N)\Delta\X\big)=0\) and thus \(\phi(\X\setminus N)=\phi(\X)=\X\), so that the second inclusion
in \eqref{eq:from_ld_to_lift} implies that \(\ell(N)=\varnothing\). Moreover, given any \(E\in\Sigma\), we have
that \(\mu\big(E\Delta\phi(E)\big)=0=\mu\big(E\Delta(\X\setminus\phi(\X\setminus E))\big)\), thus \eqref{eq:from_ld_to_lift}
and the completeness of \(\mu\) ensure that \(\ell(E)\in\Sigma\) and that \(\mu\big(E\Delta\ell(E)\big)=0\). Finally, it is
easy to show that \(\ell\) is a Boolean homomorphism as a consequence of the fact that \(\omega_x\) is an ultrafilter.
\end{proof}
The following well-known result states that every differentiation basis for which the Lebesgue Differentiation Theorem holds
induces a lower density, just taking the density points.
\begin{theorem}[From Lebesgue Differentiation Theorem to lifting]
\label{thm:from_Leb_to_lift}
Let \((\X,\Sigma,\mu)\) be a complete, finite measure space, and \(\mathcal I\) a differentiation basis on \((\X,\Sigma,\mu)\) such that
\begin{equation}\label{eq:from_Leb_to_lift1}
D_{\mathcal I}(E)\in\Sigma,\qquad\mu\big(E\Delta D_{\mathcal I}(E)\big)=0,\quad\text{ for every }E\in\Sigma,
\end{equation}
where the set \(D_{\mathcal I}(E)\subset\X\) of \textbf{density points} of \(E\) with respect to \(\mathcal I\) is defined as
\begin{equation}\label{eq:from_Leb_to_lift2}
D_{\mathcal I}(E)\coloneqq\bigg\{x\in\hat\X\;\bigg|\;\lim_{I\Rightarrow x}\frac{\mu(E\cap I)}{\mu(I)}=1\bigg\}.
\end{equation}
Then there exists a von Neumann lifting \(\ell\) of \(\mu\) such that
\[
D_{\mathcal I}(E)\subset\ell(E)\subset\X\setminus D_{\mathcal I}(\X\setminus E),\quad\text{ for every }E\in\Sigma.
\]
\end{theorem}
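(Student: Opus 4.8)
The plan is to recognise the density operator $D_{\mathcal I}$ itself (after a harmless correction on a negligible set) as a lower density of $\mu$, and then to feed it into Lemma \ref{lem:from_lower_dens_to_lift}. Once we have a lower density $\phi$ with $D_{\mathcal I}(E)\subset\phi(E)$ for every $E\in\Sigma$, the lemma produces a von Neumann lifting $\ell$ of $\mu$ satisfying $\phi(E)\subset\ell(E)\subset\X\setminus\phi(\X\setminus E)$; chaining this with $D_{\mathcal I}(E)\subset\phi(E)$ and $D_{\mathcal I}(\X\setminus E)\subset\phi(\X\setminus E)$ immediately yields the asserted inclusions $D_{\mathcal I}(E)\subset\ell(E)\subset\X\setminus D_{\mathcal I}(\X\setminus E)$.

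First I would check the lower-density axioms for $D_{\mathcal I}$. Measurability of $D_{\mathcal I}(E)$ and property iii) of Definition \ref{def:lower_density} are granted verbatim by the hypothesis \eqref{eq:from_Leb_to_lift1}. Property ii) is automatic: if $\mu(E\Delta F)=0$ then $\mu\big((E\cap I)\Delta(F\cap I)\big)=0$, hence $\mu(E\cap I)=\mu(F\cap I)$ for every $I\in\mathcal I$ (recall $0<\mu(I)<+\infty$), so the defining ratios in \eqref{eq:from_Leb_to_lift2} coincide and $D_{\mathcal I}(E)=D_{\mathcal I}(F)$. The multiplicativity $D_{\mathcal I}(E\cap F)=D_{\mathcal I}(E)\cap D_{\mathcal I}(F)$ follows from the elementary bound
\[
0\le 1-\frac{\mu(E\cap F\cap I)}{\mu(I)}\le\Big(1-\frac{\mu(E\cap I)}{\mu(I)}\Big)+\Big(1-\frac{\mu(F\cap I)}{\mu(I)}\Big),
\]
together with the monotonicity $\mu(E\cap F\cap I)\le\mu(E\cap I)$: the displayed inequality gives the inclusion $\supset$ by letting $I\Rightarrow x$, while the monotonicity gives $\subset$. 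Finally $D_{\mathcal I}(\varnothing)=\varnothing$, since the ratio is identically $0$ there.

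The one genuine subtlety is that $D_{\mathcal I}(\X)=\hat\X$, which equals $\X$ only up to the $\mu$-null set $Z\coloneqq\X\setminus\hat\X$, whereas a lower density must send $\X$ to $\X$ on the nose. I cannot repair this by distributing $Z$ arbitrarily among the $\phi(E)$, because no lower density of the (zero) measure on a nonempty $Z$ exists; the correction has to be coupled to the trace of $E$ on $\hat\X$. The clean fix is to set $\phi(E)\coloneqq D_{\mathcal I}(E)$ whenever $\mu(\X\setminus E)>0$ and $\phi(E)\coloneqq\X$ whenever $\mu(\X\setminus E)=0$ (equivalently $\phi(E)=D_{\mathcal I}(E)\cup Z$ in the co-null case, noting that $D_{\mathcal I}(E)=\hat\X$ whenever $E$ is co-null, as then every ratio equals $1$). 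Since co-nullness is a $\mu$-invariant property of $E$ and $Z$ is disjoint from every $D_{\mathcal I}(\cdot)\subset\hat\X$, a short case analysis shows that $\phi$ still satisfies ii), iii) and multiplicativity, while now $\phi(\varnothing)=\varnothing$ and $\phi(\X)=\X$; moreover $D_{\mathcal I}(E)\subset\phi(E)$ throughout.

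To conclude, the completeness of $\mu$ (used to know $Z\in\Sigma$ and that the modified sets remain measurable) lets me apply Lemma \ref{lem:from_lower_dens_to_lift} to $\phi$ and obtain the desired $\ell$, after which the inclusion chain described above finishes the proof. I expect the bookkeeping around the negligible set $Z$ to be the only delicate point; the density-point estimates underlying the multiplicativity and null-invariance are routine.
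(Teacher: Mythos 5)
Your proof is correct and follows the same route as the paper: verify that the density-point operator \(D_{\mathcal I}\) is a lower density and then invoke Lemma \ref{lem:from_lower_dens_to_lift}, chaining the resulting inclusions. The only difference is your careful correction for the fact that \(D_{\mathcal I}(\X)=\hat\X\) rather than \(\X\) --- a point the paper's proof silently glosses over by asserting \(D_{\mathcal I}(\X)=\X\) directly --- and your fix via the co-null case distinction is sound (and unnecessary only when \(\hat\X=\X\)).
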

\begin{proof}
By virtue of Lemma \ref{lem:from_lower_dens_to_lift}, to prove the statement
it suffices to show that \(D_{\mathcal I}\colon\Sigma\to\Sigma\) is a lower density of
\(\mu\). Item iii) of Definition \ref{def:lower_density} is exactly
\eqref{eq:from_Leb_to_lift1}. Moreover, we immediately deduce from the definition
\eqref{eq:from_Leb_to_lift2} that \(D_{\mathcal I}(\varnothing)=\varnothing\), that
\(D_{\mathcal I}(\X)=\X\), and that item ii) of Definition \ref{def:lower_density} holds.
Finally, in order to prove item i) of Definition \ref{def:lower_density}, it is only left
to check \(D_{\mathcal I}(E\cap F)=D_{\mathcal I}(E)\cap D_{\mathcal I}(F)\) for all
\(E,F\in\Sigma\). On the one hand, using that \(E\cap F\subset E\) it is immediate to see
that \(D_{\mathcal I}(E\cap F)\subset D_{\mathcal I}(E)\), and similarly
\(D_{\mathcal I}(E\cap F)\subset D_{\mathcal I}(F)\), thus the inclusion
\(D_{\mathcal I}(E\cap F)\subset D_{\mathcal I}(E)\cap D_{\mathcal I}(F)\) is proven.
On the other hand, for any \(I\in\mathcal I\) we have
\[
\frac{\mu(E\cap F\cap I)}{\mu(I)}=\frac{\mu(E\cap I)}{\mu(I)}+\frac{\mu(F\cap I)}{\mu(I)}
-\frac{\mu\big((E\cup F)\cap I\big)}{\mu(I)}\geq\frac{\mu(E\cap I)}{\mu(I)}+
\frac{\mu(F\cap I)}{\mu(I)}-1,
\]
so that by taking \(x\in D_{\mathcal I}(E)\cap D_{\mathcal I}(F)\) and
letting \(I\Rightarrow x\) we conclude that \(x\in D_{\mathcal I}(E\cap F)\).
\end{proof}
We point out that, in fact, the assumption in Theorem \ref{thm:from_Leb_to_lift} is a priori weaker
than the statement of the Lebesgue Differentiation Theorem. Namely, we are only requiring that the
differentiation basis \(\mathcal I\) verifies the statement of the Lebesgue Density Theorem.
\begin{remark}{\rm
In general, the map \(D_{\mathcal I}\colon\Sigma\to\Sigma\) defined in \eqref{eq:from_Leb_to_lift2} is not a von Neumann lifting.
For example, consider the real line \(\R\) together with the Lebesgue measure as a measure space, and the family
\(\mathcal I\) of bounded non-trivial intervals as a differentiation basis. Then \(D_{\mathcal I}\) does not
preserve unions: \(0\) is in \(D_{\mathcal I}([-1,1])\), but neither in \(D_{\mathcal I}([-1,0])\) nor in \(D_{\mathcal I}([0,1])\).
\fr}\end{remark}
Let us now focus on those measure spaces \((\X,\Sigma,\mu)\) that are \textbf{separable}, which means
that for some (thus, any) exponent \(p\in[1,\infty)\) the \(p\)-Lebesgue space \(L^p(\mu)\) is separable.
It follows, \emph{e.g.}, from Remark \ref{rmk:good_family_approx} that \(\big(\X,\mathscr B(\X),\mu\big)\)
is separable whenever \((\X,\tau)\) is a Polish space and \(\mu\) is a \(\sigma\)-finite Borel measure on \(\X\).
Next we state a corollary of a result concerning separable measure spaces, recently obtained by Brena
and Gigli in \cite[Theorem A]{BG22}. Its proof is based on Doob's Martingale Convergence Theorem and relies only
on the Axiom of Countable Choice.
\begin{theorem}\label{thm:Doob}
Let \((\X,\Sigma,\mu)\) be a complete, separable, finite measure space. Then there exists a sequence
\(\mathcal P=(\mathcal P_k)_{k\in\N}\) of partitions \(\mathcal P_k=(P_{k,n})_{n\in I_k}\subset\Sigma\) of \(\X\), with \(I_k\subset\N\), such that:
\begin{itemize}
\item[\(\rm i)\)] \(\mu(P_{k,n})>0\) holds for every \(k\in\N\) and \(n\in I_k\).
\item[\(\rm ii)\)] Each family \(\mathcal P_{k+1}\) is a refinement of \(\mathcal P_k\), meaning that for any \(k\in\N\) and \(n\in I_{k+1}\)
there exists (a unique) index \(m\in I_k\) such that \(P_{k+1,n}\subset P_{k,m}\).
\item[\(\rm iii)\)] For any \(x\in\X\), let \(\big(n_k(x)\big)_{k\in\N}\subset\N\) be the unique sequence such that \(x\in P_k^x\coloneqq P_{k,n_k(x)}\)
for every \(k\in\N\). To any \(E\in\Sigma\) we associate the set \(\mathcal D_{\mathcal P}(E)\in\Sigma\), which is given by
\[
\mathcal D_{\mathcal P}(E)\coloneqq\bigg\{x\in\X\;\bigg|\;\lim_{k\to\infty}\frac{\mu(E\cap P_k^x)}{\mu(P_k^x)}=1\bigg\}.
\]
Then it holds that \(\mu\big(E\Delta\mathcal D_{\mathcal P}(E)\big)=0\) for every \(E\in\Sigma\).
\end{itemize}
Moreover, if \((\X,\tau)\) is a Polish space and \(\Sigma\) is the completion of \(\mathscr B(\X)\) under \(\mu\),
then the above statement holds for any sequence \(\mathcal P=(\mathcal P_k)_{k\in\N}\) of partitions
\(\mathcal P_k=(P_{k,n})_{n\in I_k}\subset\mathscr B(\X)\) of \(\X\) verifying the following conditions: each family
\(\mathcal P_{k+1}\) is a refinement of \(\mathcal P_k\) and it holds that
\begin{equation}\label{eq:hp_part_Polish}\begin{split}
&\mu(P_{k,n})>0,\quad\text{ for every }k\in\N\text{ and }n\in I_k,\\
&\lim_{k\to\infty}\sup_{n\in I_k}{\rm diam}\big(P_{k,n}\cap{\rm spt}(\mu)\big)=0,
\end{split}\end{equation}
where diameters are with respect to some complete, separable distance \(\sfd\) on \(\X\) that induces \(\tau\).
\end{theorem}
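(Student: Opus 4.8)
The plan is to recast condition (iii) as an almost-everywhere martingale convergence statement and to deduce it from Doob's theorem. For a sequence of refining partitions $\mathcal P=(\mathcal P_k)_{k\in\N}$ let $\mathcal F_k\coloneqq\sigma(\mathcal P_k)$ be the finite $\sigma$-algebra generated by $\mathcal P_k$; by (ii) the family $(\mathcal F_k)_{k\in\N}$ is a filtration. Fix $E\in\Sigma$ and set
\[
f_k\coloneqq\sum_{n\in I_k}\frac{\mu(E\cap P_{k,n})}{\mu(P_{k,n})}\,\1_{P_{k,n}},
\]
so that $f_k(x)=\mu(E\cap P_k^x)/\mu(P_k^x)$ for every $x\in\X$. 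Each $f_k$ is a simple, $\mathcal F_k$-measurable function with $0\le f_k\le 1$ which, by the elementary formula for conditional expectation with respect to a $\sigma$-algebra generated by a finite partition, equals $\mathbb{E}[\1_E\mid\mathcal F_k]$; consequently, by (ii) and the tower property, $(f_k)_{k\in\N}$ is a uniformly bounded martingale adapted to $(\mathcal F_k)_{k\in\N}$. Since $\mathcal D_{\mathcal P}(E)=\{x\in\X\,:\,\lim_k f_k(x)=1\}$ is obtained from the measurable functions $f_k$ by countably many limit operations, it lies in $\Sigma$; the remaining content of (iii) is the identity $\mu\big(E\Delta\mathcal D_{\mathcal P}(E)\big)=0$.

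For the first assertion I would produce the partitions from the separability hypothesis. Separability of $L^1(\mu)$ is equivalent to separability of the measure algebra $\Sigma/\!\sim_\mu$ with respect to the pseudometric $(A,B)\mapsto\mu(A\Delta B)$, so one can fix a sequence $(A_j)_{j\in\N}\subset\Sigma$ dense in this pseudometric. Setting $\mathcal G_k\coloneqq\sigma(A_1,\dots,A_k)$ yields an increasing sequence of finite $\sigma$-algebras whose atoms, after absorbing the finitely many null atoms at each level into a fixed positive-measure one, give partitions $\mathcal P_k$ satisfying (i) and (ii). Density of $(A_j)_{j\in\N}$ guarantees that $\mathcal F_\infty\coloneqq\sigma\big(\bigcup_k\mathcal F_k\big)$ contains $\Sigma$ up to $\mu$-null sets, whence $\mathbb{E}[\1_E\mid\mathcal F_\infty]=\1_E$ holds $\mu$-a.e. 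Doob's martingale convergence theorem then gives $f_k\to\1_E$ $\mu$-a.e., i.e.\ the limit in (iii) equals $1$ on $E$ and $0$ on $\X\setminus E$ up to a null set, which is exactly $\mu\big(E\Delta\mathcal D_{\mathcal P}(E)\big)=0$. This is the content of \cite[Theorem A]{BG22}, whose proof rests on Doob's theorem and uses only the Axiom of Countable Choice.

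For the \emph{moreover} part I would keep the martingale machinery but replace the combinatorial construction of the filtration by the given geometric one, reducing the claim to showing that $\mathcal F_\infty$ again contains $\mathscr B(\X)$ up to $\mu$-null sets. Here the shrinking-diameter condition \eqref{eq:hp_part_Polish} does the work. For an arbitrary open set $U\subset\X$ set $U_k\coloneqq\bigcup\big\{P_{k,n}\,:\,P_{k,n}\cap{\rm spt}(\mu)\subset U\big\}\in\mathcal F_k$. If $x\in U\cap{\rm spt}(\mu)$ then, since $x\in P_k^x\cap{\rm spt}(\mu)$ and ${\rm diam}\big(P_k^x\cap{\rm spt}(\mu)\big)\to 0$, the atom $P_k^x$ is eventually contained in $U$ along the support, so $x\in\bigcup_k U_k$; conversely $\big(\bigcup_k U_k\big)\cap{\rm spt}(\mu)\subset U$ by construction. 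As $\mu\big(\X\setminus{\rm spt}(\mu)\big)=0$, this gives $\mu\big(U\Delta\bigcup_k U_k\big)=0$, so every open set lies in $\mathcal F_\infty$ modulo null sets. The collection of sets belonging to $\mathcal F_\infty$ up to $\mu$-null sets is a $\sigma$-algebra containing the open sets and, by completeness of $\mu$, all of $\Sigma$; hence $\mathbb{E}[\1_E\mid\mathcal F_\infty]=\1_E$ $\mu$-a.e.\ for every $E\in\Sigma$, and Doob's theorem finishes the argument exactly as in the first part.

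The main obstacle I anticipate lies in the \emph{moreover} part, specifically in the interplay between the partition atoms $P_{k,n}$, which are subsets of all of $\X$, and the metric control, which is assumed only on ${\rm spt}(\mu)$: one must verify that modifying atoms off the support is harmless (it is, as that set is $\mu$-null) while the diameter hypothesis pins down the atoms on the support finely enough to approximate open sets from inside. A secondary technical point, already present in the first part, is the bookkeeping required to convert the atoms of the $\sigma$-algebras $\mathcal G_k$ into genuine partitions of all of $\X$ into positive-measure sets while preserving the refinement property (ii) across all levels; this is routine but must be carried out consistently.
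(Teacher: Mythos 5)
Your argument is correct and coincides with the route the paper itself indicates: the paper gives no self-contained proof of this theorem but cites \cite[Theorem A]{BG22}, noting explicitly that the proof rests on Doob's Martingale Convergence Theorem, which is precisely the martingale argument you reconstruct (conditional expectations on the filtration generated by the refining partitions, convergence to \(\1_E\) once \(\mathcal F_\infty\) is shown to contain \(\Sigma\) modulo null sets, with the shrinking-diameter hypothesis supplying this in the Polish case). The only point deserving care, which you already flag, is that the absorption of null atoms must be performed inside each positive-measure parent atom so as not to destroy the refinement property (ii).
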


Observe that, given any \((\X,\Sigma,\mu)\) and \(\mathcal P\) as in Theorem \ref{thm:Doob}, the family of sets
\[
\mathcal I(\mathcal P)\coloneqq\big\{P_{k,n}\;\big|\;k\in\N,\,n\in I_k\big\}\subset\Sigma
\]
is a differentiation basis on \((\X,\Sigma,\mu)\) and it holds that \(D_{\mathcal I(\mathcal P)}(E)=\mathcal D_{\mathcal P}(E)\)
for every \(E\in\Sigma\). Given that \((\X,\Sigma,\mu)\) and \(\mathcal I(\mathcal P)\) verify the hypotheses of Theorem
\ref{thm:from_Leb_to_lift}, it thus follows that:
\begin{theorem}[von Neumann liftings on separable measure spaces]\label{thm:exist_lift_separable}
Let \((\X,\Sigma,\mu)\) be a complete, separable, finite measure space. Let \(\mathcal P\) be any sequence of partitions of the
space \(\X\) as in Theorem \ref{thm:Doob}. Then there exists a von Neumann lifting \(\ell_{\mathcal P}\) of \(\mu\) such that
\begin{equation}\label{eq:cond_lift_induced}
\mathcal D_{\mathcal P}(E)\subset\ell_{\mathcal P}(E)\subset\X\setminus\mathcal D_{\mathcal P}(\X\setminus E),\quad\text{ for every }E\in\Sigma.
\end{equation}
\end{theorem}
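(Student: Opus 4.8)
The plan is to realise the statement as a direct application of Theorem~\ref{thm:from_Leb_to_lift} to the differentiation basis $\mathcal I(\mathcal P)\coloneqq\{P_{k,n}\mid k\in\N,\,n\in I_k\}$ generated by the nested partitions $\mathcal P$. First I would check that $\mathcal I(\mathcal P)$ is indeed a differentiation basis in the sense of Definition~\ref{def:diff_basis}. Item i) is immediate: $\mu(P_{k,n})>0$ by Theorem~\ref{thm:Doob} i), and $\mu(P_{k,n})<+\infty$ since $\mu$ is finite. Item ii) is trivial because each $\mathcal P_k$ is a partition of $\X$, so every point belongs to some cell and hence $\hat\X=\X$. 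For item iii), given $x\in\X$ and two cells $P_{k,n},P_{k',n'}$ containing $x$ with, say, $k\le k'$, the refinement property of Theorem~\ref{thm:Doob} ii) (iterated) yields a unique $m$ with $P_{k',n'}\subset P_{k,m}$; since $x\in P_{k',n'}\cap P_{k,n}$ and $\mathcal P_k$ is a partition, necessarily $m=n$, so $P_{k',n'}\subset P_{k,n}$ and directedness holds with $J\coloneqq P_{k',n'}$.

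The key step is the identity $D_{\mathcal I(\mathcal P)}(E)=\mathcal D_{\mathcal P}(E)$ for every $E\in\Sigma$, comparing the density points \eqref{eq:from_Leb_to_lift2} of $\mathcal I(\mathcal P)$ with the sets from Theorem~\ref{thm:Doob} iii). The decisive observation is that, for fixed $x$, the elements of $\mathcal I(\mathcal P)$ containing $x$ are exactly the cells $P_k^x\coloneqq P_{k,n_k(x)}$, $k\in\N$, and by the refinement property these form a decreasing chain $P_1^x\supset P_2^x\supset\cdots$ totally ordered by inclusion. Consequently the family of elements of $\mathcal I(\mathcal P)$ through $x$ is cofinal with the sequence $(P_k^x)_{k\in\N}$, and any $\mathcal I(\mathcal P)$-limit at $x$ collapses to the ordinary sequential limit along $(P_k^x)_k$: indeed, $\lim_{I\Rightarrow x}\Phi(I)=y$ means precisely that for each neighbourhood $U$ of $y$ there is $k_0$ with $\Phi(P_k^x)\in U$ for all $k\ge k_0$. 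Applying this to $\Phi(I)\coloneqq\mu(E\cap I)/\mu(I)$ shows that $x\in D_{\mathcal I(\mathcal P)}(E)$ if and only if $\lim_k\mu(E\cap P_k^x)/\mu(P_k^x)=1$, that is, $x\in\mathcal D_{\mathcal P}(E)$.

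Finally I would invoke Theorem~\ref{thm:from_Leb_to_lift}. Its hypothesis \eqref{eq:from_Leb_to_lift1} translates, via the identity just established, into $\mathcal D_{\mathcal P}(E)\in\Sigma$ and $\mu(E\Delta\mathcal D_{\mathcal P}(E))=0$ for every $E\in\Sigma$, both of which are guaranteed by Theorem~\ref{thm:Doob} (measurability is part of its statement, while the negligibility is item iii)). Theorem~\ref{thm:from_Leb_to_lift} then produces a von Neumann lifting $\ell_{\mathcal P}$ of $\mu$ satisfying $D_{\mathcal I(\mathcal P)}(E)\subset\ell_{\mathcal P}(E)\subset\X\setminus D_{\mathcal I(\mathcal P)}(\X\setminus E)$ for every $E\in\Sigma$; substituting $D_{\mathcal I(\mathcal P)}=\mathcal D_{\mathcal P}$ gives exactly \eqref{eq:cond_lift_induced}. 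The only genuinely non-mechanical point is the reduction of the $\mathcal I(\mathcal P)$-limit to a sequential limit in the second paragraph; once the cells containing $x$ are seen to form a decreasing chain this is routine, so I expect no real obstacle beyond carefully exploiting the refinement structure of $\mathcal P$.
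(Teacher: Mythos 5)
Your proposal is correct and follows exactly the route the paper takes: it verifies that $\mathcal I(\mathcal P)$ is a differentiation basis whose density points coincide with $\mathcal D_{\mathcal P}(E)$ (using that the cells through a fixed $x$ form a decreasing chain, so the $\mathcal I(\mathcal P)$-limit reduces to the sequential limit along $(P_k^x)_k$), and then applies Theorem \ref{thm:from_Leb_to_lift}. The paper states this same argument more tersely in the paragraph preceding the theorem; your write-up just supplies the routine details.
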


Finally, the same argument yields existence of strong liftings on an arbitrary Polish space:
\begin{corollary}[Existence of strong liftings]\label{cor:strong_lifting_on_mms}
Let \((\X,\Sigma,\mu)\) be a complete, finite measure space. Assume that \((\X,\tau)\) is a Polish space and that \(\Sigma\)
is the completion of \(\mathscr B(\X)\) under \(\mu\). Let \(\mathcal P=(\mathcal P_k)_{k\in\N}\) be any sequence of partitions
\(\mathcal P_k=(P_{k,n})_{n\in I_k}\subset\mathscr B(\X)\) of \(\X\) as in Theorem \ref{thm:Doob} and satisfying \eqref{eq:hp_part_Polish}.
Then there exists a strong lifting \(\ell_{\mathcal P}\) of \(\mu\) such that \eqref{eq:cond_lift_induced} is verified.
\end{corollary}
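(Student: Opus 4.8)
The plan is to combine Theorem \ref{thm:exist_lift_separable} with the additional geometric control provided by the second condition in \eqref{eq:hp_part_Polish}, namely that the diameters of the partition pieces (intersected with the support) shrink to zero. The point is that Theorem \ref{thm:exist_lift_separable} already produces, under the hypotheses at hand, a von Neumann lifting $\ell_{\mathcal P}$ of $\mu$ satisfying \eqref{eq:cond_lift_induced}: indeed, when $(\X,\tau)$ is Polish and $\Sigma$ is the completion of $\mathscr B(\X)$, Theorem \ref{thm:Doob} guarantees that any partition sequence $\mathcal P$ obeying \eqref{eq:hp_part_Polish} yields $\mu\big(E\Delta\mathcal D_{\mathcal P}(E)\big)=0$ for every $E\in\Sigma$, so the differentiation basis $\mathcal I(\mathcal P)$ verifies the hypotheses of Theorem \ref{thm:from_Leb_to_lift} and the lifting $\ell_{\mathcal P}$ exists with $\mathcal D_{\mathcal P}(E)\subset\ell_{\mathcal P}(E)$. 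Thus the only thing left to prove is that this $\ell_{\mathcal P}$ is \emph{strong}, i.e.\ that \eqref{eq:def_strong_lift} holds: $U\cap{\rm spt}(\mu)\subset\ell_{\mathcal P}(U)$ for every open $U\in\tau$.

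First I would reduce the strongness condition, via \eqref{eq:cond_lift_induced}, to a statement about density points: since $\mathcal D_{\mathcal P}(U)\subset\ell_{\mathcal P}(U)$, it suffices to show that every point of $U\cap{\rm spt}(\mu)$ is a density point of $U$ with respect to $\mathcal I(\mathcal P)$, that is, $U\cap{\rm spt}(\mu)\subset\mathcal D_{\mathcal P}(U)$. So fix an open set $U$ and a point $x\in U\cap{\rm spt}(\mu)$. Recall that $P_k^x=P_{k,n_k(x)}$ denotes the unique piece of $\mathcal P_k$ containing $x$, and that these pieces are nested (by the refinement property ii) of Theorem \ref{thm:Doob}) and contain $x$. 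The goal is to check that
\[
\lim_{k\to\infty}\frac{\mu(U\cap P_k^x)}{\mu(P_k^x)}=1.
\]

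The key geometric step is to use the diameter control to show that the pieces $P_k^x$ eventually sit inside $U$, at least after intersecting with the support. Since $U$ is open and $x\in U$, there is $r>0$ with $B_r(x)\subset U$. By the second line of \eqref{eq:hp_part_Polish}, choose $k_0$ so large that $\sup_{n\in I_k}{\rm diam}\big(P_{k,n}\cap{\rm spt}(\mu)\big)<r$ for all $k\ge k_0$; since $x\in P_k^x\cap{\rm spt}(\mu)$, this forces $P_k^x\cap{\rm spt}(\mu)\subset B_r(x)\subset U$ for every $k\ge k_0$. Consequently $P_k^x\setminus U\subset P_k^x\setminus{\rm spt}(\mu)$ is $\mu$-null, because the complement of the support carries no mass. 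Hence $\mu(U\cap P_k^x)=\mu(P_k^x)$ for all $k\ge k_0$, and since $\mu(P_k^x)>0$ by item i) of Theorem \ref{thm:Doob}, the ratio equals $1$ for all large $k$, giving the desired limit and thus $x\in\mathcal D_{\mathcal P}(U)$.

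Putting this together, $U\cap{\rm spt}(\mu)\subset\mathcal D_{\mathcal P}(U)\subset\ell_{\mathcal P}(U)$ for every open $U$, which is exactly \eqref{eq:def_strong_lift}, so $\ell_{\mathcal P}$ is a strong lifting satisfying \eqref{eq:cond_lift_induced}. I expect the main (and really only) obstacle to be the clean handling of the support: one must be careful that ``$P_k^x$ is small'' only controls $P_k^x\cap{\rm spt}(\mu)$, not all of $P_k^x$, and the argument hinges on the fact that mass off the support is negligible, so that shrinking the support-part of the piece into $U$ is enough to make the density ratio equal to $1$. Everything else is a direct invocation of the already-established Theorems \ref{thm:Doob}, \ref{thm:from_Leb_to_lift}, and \ref{thm:exist_lift_separable}.
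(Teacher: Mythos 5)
Your proposal is correct and follows essentially the same route as the paper: invoke Theorem \ref{thm:exist_lift_separable} to get $\ell_{\mathcal P}$ with \eqref{eq:cond_lift_induced}, then use the diameter condition in \eqref{eq:hp_part_Polish} to show every point of $U\cap{\rm spt}(\mu)$ is a density point of $U$, whence $U\cap{\rm spt}(\mu)\subset\mathcal D_{\mathcal P}(U)\subset\ell_{\mathcal P}(U)$. If anything, your explicit restriction to $x\in U\cap{\rm spt}(\mu)$ (so that $x\in P_k^x\cap{\rm spt}(\mu)$ anchors the small-diameter set inside $B_r(x)$) is a touch more careful than the paper's write-up, which states the inclusion for all $x\in U$.
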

\begin{proof}
Theorem \ref{thm:exist_lift_separable} yields a von Neumann lifting \(\ell_{\mathcal P}\) of \(\mu\) satisfying \eqref{eq:cond_lift_induced}.
Choose a complete, separable distance \(\sfd\) on \(\X\) that induces the topology \(\tau\). Let \(U\in\tau\setminus\{\varnothing\}\) and
\(x\in U\) be fixed. Then there exists \(r>0\) such that the open \(\sfd\)-ball of centre \(x\) and radius \(r\) is contained in \(U\).
Thanks to \eqref{eq:hp_part_Polish}, we can also choose \(\bar k\in\N\) so that \({\rm diam}\big(P_k^x\cap{\rm spt}(\mu)\big)<r/2\)
for every \(k\geq\bar k\). Hence, we have that \(P_k^x\cap{\rm spt}(\mu)\subset U\) for every \(k\geq\bar k\), thus accordingly
\[
\frac{\mu(U\cap P_k^x)}{\mu(P_k^x)}=\frac{\mu(U\cap{\rm spt}(\mu)\cap P_k^x)}{\mu(P_k^x)}=\frac{\mu({\rm spt}(\mu)\cap P_k^x)}{\mu(P_k^x)}=1,
\quad\text{ for every }k\geq\bar k.
\]
This implies that \(x\in\mathcal D_{\mathcal P}(U)\subset\ell_{\mathcal P}(U)\). Thanks to the arbitrariness of \(x\in U\),
we have shown that \(U\subset\ell_{\mathcal P}(U)\). Therefore, \(\ell_{\mathcal P}\) is a strong lifting of \(\mu\), as desired.
\end{proof}
\section{Approximate continuity with respect to a differentiation basis}
\label{app:approx_cont}
In Section \ref{s:LDT}, the main Theorem \ref{thm:Lebesgue_diff} was obtained as a
corollary of Lemma \ref{lem:pre_Leb}, where we proved that `measurable maps are almost
everywhere approximately continuous with respect to a differentiation basis', in a sense
which we did not make precise. In this appendix, we expand the discussion concerning
this point. In Definition \ref{def:density_top} we recall the notion of density
topology, which is a topology induced by a von Neumann lifting. In Proposition
\ref{prop:approx_cont} we provide different characterisations of the notion of
approximate continuity mentioned above. We learnt of this kind of approach
from \cite{BliedtnerLoeb00}.
\medskip

Let us first recall the concept of density topology associated with a lower density (see \cite{LukesMalyZajicek86}):
\begin{definition}[Density topology]\label{def:density_top}
Let \((\X,\Sigma,\mu)\) be a complete, \(\sigma\)-finite measure space
and let \(\phi\) be a lower density of \(\mu\). Then we define the family
of sets \(\mathcal T_\phi\subset\Sigma\) as
\[
{\mathcal T}_\phi\coloneqq\big\{E\in\Sigma\;\big|\;E\subset\ell(E)\big\}.
\]
We say that \(\mathcal T_\phi\) is the \textbf{density topology} on \(\X\)
associated with \(\phi\).
\end{definition}

Let us verify that \(\mathcal T_\phi\) is actually a topology on \(\X\).
Trivially, one has that \(\varnothing,\X\in\mathcal T_\phi\).
Given any \(E,F\in\mathcal T_\phi\), it holds \(E\cap F\subset\phi(E)\cap\phi(F)
=\phi(E\cap F)\) and thus \(E\cap F\in\mathcal T_\phi\). In order to show that
\(\mathcal T_\phi\) is closed under arbitrary unions, fix a (possibly uncountable) family
\(\{E_i\}_{i\in I}\subset\mathcal T_\phi\). Call \(E\coloneqq\bigcup_{i\in I}E_i\) for
brevity. Note that, a priori, we do not know whether \(E\in\Sigma\). However, we can find
a countable family \(C\subset I\) such that \(\mu\big(E\Delta\bigcup_{i\in C}E_i\big)=0\),
thus accordingly the completeness of \(\mu\) and the measurability of \(\bigcup_{i\in C}E_i\)
ensure that \(E\in\Sigma\). Finally, for any \(i\in I\) we have \(E_i\subset\phi(E_i)\subset
\phi(E)\), whence it follows that \(E\subset\phi(E)\). This shows
\(\bigcup_{i\in I}E_i\in\mathcal T_\phi\).
\begin{remark}[Strong liftings and density topology]{\rm
Let \((\X,\Sigma,\mu)\) be a complete, \(\sigma\)-finite measure space
such that \({\rm spt}(\mu)=\X\). Let \(\tau\) be a Polish topology
on \(\X\) with \(\tau\subset\Sigma\). Let \(\ell\) be any strong lifting
of \(\mu\). Then it holds that \(\tau\) is contained in the density topology \(\mathcal T_\ell\).
\fr}\end{remark}
\begin{proposition}\label{prop:approx_cont}
Let \((\X,\Sigma,\mu)\) be a complete, \(\sigma\)-finite measure space and let
\((\Y,\tau)\) be a topological space. Let \(\ell\) be a von Neumann lifting of \(\mu\).
Let \(\varphi\colon\X\to\Y\) be a measurable map. Let \(x\in\hat\X\) be a given point.
Then the following conditions are equivalent:
\begin{itemize}
\item[\(\rm i)\)] For every \(U\in\mathcal N_\Y\big(\varphi(x)\big)\) there exists
\(I\in\mathcal I^\ell_x\) such that \(\varphi(z)\in U\) for \(\mu\)-a.e.\ \(z\in I\).
\item[\(\rm ii)\)] It holds that \(x\in\ell\big(\varphi^{-1}(U)\big)\) for every
\(U\in\mathcal N_\Y\big(\varphi(x)\big)\).
\item[\(\rm iii)\)] The mapping \(\varphi\colon(\X,{\mathcal T}_\ell)\to(\Y,\tau)\)
is continuous at \(x\).
\end{itemize}
When the above hold, we say that \(\varphi\) is \textbf{approximately continuous} at \(x\)
with respect to \(\mathcal I^\ell\).
\end{proposition}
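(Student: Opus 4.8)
The plan is to prove the cyclic equivalence with condition (ii) serving as the hub, establishing (i)$\Leftrightarrow$(ii) and (ii)$\Leftrightarrow$(iii). Throughout I would fix $U\in\mathcal N_\Y\big(\varphi(x)\big)$ and write $A\coloneqq\varphi^{-1}(U)$, which lies in $\Sigma$ because $\varphi$ is measurable and $U$ is open. The single observation that makes the three conditions line up is that, $U$ being a neighbourhood of $\varphi(x)$, one has $\varphi(x)\in U$ and hence $x\in A$; this lets me place $x$ simultaneously inside $A$ and inside $\ell(A)$ whenever the latter is available. The proof is then just repeated bookkeeping with the lifting axioms from Definition \ref{def:lifting}, namely the Boolean homomorphism property $\ell(E\cap F)=\ell(E)\cap\ell(F)$, the compatibility $\ell(E)=\ell(F)$ whenever $\mu(E\Delta F)=0$, and the idempotency $\ell\big(\ell(E)\big)=\ell(E)$ recorded after that definition.

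For (i)$\Rightarrow$(ii): given $I=\ell(E)\in\mathcal I^\ell_x$ with $\mu(I\setminus A)=0$, the set $I\cap A$ is $\mu$-equivalent to $I$, so $\ell(I\cap A)=\ell(I)=\ell\big(\ell(E)\big)=\ell(E)=I\ni x$; since $I\cap A\subset A$ forces $\ell(I\cap A)\subset\ell(A)$, I conclude $x\in\ell(A)$. For the converse (ii)$\Rightarrow$(i), I would use $x\in\hat\X$ to fix some $I_0=\ell(E_0)\in\mathcal I^\ell_x$ with $0<\mu(E_0)<+\infty$ and set $I\coloneqq\ell(A)\cap I_0=\ell(A\cap E_0)$. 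Then $x\in I$, and $0<\mu(A\cap E_0)\leq\mu(E_0)<+\infty$ (positivity holding because $\mu(A\cap E_0)=0$ would give $I=\ell(\varnothing)=\varnothing$, contradicting $x\in I$), so $I\in\mathcal I^\ell_x$; finally $\mu\big(I\Delta(A\cap E_0)\big)=0$ yields $\varphi(z)\in U$ for $\mu$-a.e.\ $z\in I$, which is (i).

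For (ii)$\Rightarrow$(iii): the candidate neighbourhood is $V\coloneqq A\cap\ell(A)$. Since $V\subset A$ and $A\setminus V\subset A\Delta\ell(A)$ is $\mu$-null, $V$ is $\mu$-equivalent to $A$, hence $\ell(V)=\ell(A)$ and therefore $V\subset\ell(A)=\ell(V)$, i.e.\ $V\in\mathcal T_\ell$; as $x\in A$ (from $\varphi(x)\in U$) and $x\in\ell(A)$ (from (ii)), we have $x\in V$, while $V\subset A$ gives $\varphi(V)\subset U$, so $\varphi$ is $\mathcal T_\ell$-continuous at $x$. For (iii)$\Rightarrow$(ii) I would take a $\mathcal T_\ell$-open set $V\ni x$ with $\varphi(V)\subset U$, i.e.\ $V\subset A$; then $x\in V\subset\ell(V)\subset\ell(A)$, which is (ii).

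I do not expect a genuinely hard step here: the argument is an unwinding of definitions. The only points needing a little attention are the finiteness bookkeeping required to certify that the sets produced actually belong to $\mathcal I^\ell_x$ (handled by intersecting with a fixed element $I_0$ of $\mathcal I^\ell_x$, whose existence is exactly the condition $x\in\hat\X$), and keeping track of which containments are genuine and which hold only up to $\mu$-null sets when passing through $\ell$. Since each implication closes the cycle through (ii), combining the four implications gives the full equivalence.
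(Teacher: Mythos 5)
Your proof is correct and uses essentially the same ingredients as the paper's: the implications (i)$\Rightarrow$(ii) and (ii)$\Rightarrow$(iii) coincide with the paper's arguments (including the choice of $A\cap\ell(A)$ as the $\mathcal T_\ell$-neighbourhood), and your (ii)$\Rightarrow$(i) reuses the same trick of intersecting $\ell(A)$ with a fixed element of $\mathcal I^\ell_x$ that the paper employs in its (iii)$\Rightarrow$(i) step. The only difference is the routing (two equivalences through the hub (ii) rather than the cycle i)$\Rightarrow$ii)$\Rightarrow$iii)$\Rightarrow$i)), which is immaterial.
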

\begin{proof}
\ \\
{\color{blue}\({\rm i)}\Rightarrow{\rm ii)}\).} Fix \(U\in\mathcal N_\Y\big(\varphi(x)\big)\)
and pick any \(I\in\mathcal I^\ell_x\) such that \(\varphi(z)\in U\) for
\(\mu\)-a.e.\ \(z\in I\). Then the set \(E\coloneqq I\cap\varphi^{-1}(U)\in\Sigma\)
satisfies \(\mu(E\Delta I)=0\) and thus \(x\in I=\ell(I)=\ell(E)\subset\ell\big(\varphi^{-1}(U)\big)\).\\
{\color{blue}\({\rm ii)}\Rightarrow{\rm iii)}\).} Given any neighbourhood
\(U\in\mathcal N_\Y\big(\varphi(x)\big)\), we define
\(E\coloneqq\varphi^{-1}(U)\cap\ell\big(\varphi^{-1}(U)\big)\in\Sigma\).
Observe that \(\ell(E)=\ell\big(\varphi^{-1}(U)\big)\supset E\), which shows that
\(E\in{\mathcal T}_\ell\). Notice that, by definition, we have that \(\varphi(z)\in U\)
for every \(z\in E\). Given that \(x\in\varphi^{-1}(U)\subset E\) (which says that \(E\)
is a neighbourhood of \(x\) in the topology \({\mathcal T}_\ell\)) and thanks to the
arbitrariness of \(U\), we can conclude that the mapping \(\varphi\) is
\({\mathcal T}_\ell\)-continuous at \(x\).\\
{\color{blue}\({\rm iii)}\Rightarrow{\rm i)}\).} Let
\(U\in\mathcal N_\Y\big(\varphi(x)\big)\) be given. Then the \({\mathcal T}_\ell\)-continuity
assumption on \(\varphi\) at \(x\) yields the existence of a set
\(E\in{\mathcal T}_\ell\) such that \(x\in E\) and \(\varphi(E)\subset U\). Now fix
an arbitrary set \(I'\in\mathcal I^\ell_x\) and define \(I\coloneqq I'\cap\ell(E)\).
Given that \(x\in E\subset\ell(E)\), we know that \(I\in\mathcal I^\ell_x\). Moreover, the
fact that \(\varphi(z)\in U\) for every \(z\in E\) and \(\mu\big(E\Delta\ell(E)\big)=0\)
imply that \(\varphi(z)\in U\) for \(\mu\)-a.e.\ \(z\in I\).
\end{proof}
Observe that Lemma \ref{lem:pre_Leb} can be rephrased as follows.
Every essentially separably valued, measurable map \(\varphi\colon\X\to\Y\), from
a complete, \(\sigma\)-finite measure space \((\X,\Sigma,\mu)\) with lifting \(\ell\)
to a metric space \((\Y,\sfd)\), is approximately continuous at \(\mu\)-a.e.\ point
of \(\X\) with respect to \(\mathcal I^\ell\).
\section{More on liftings of Lebesgue--Bochner spaces}\label{app:lift_Leb-Boch}
As we already pointed out, the theory of measurable Banach bundles covers the case
of Lebesgue--Bochner spaces, which have been thoroughly studied in the literature,
also from the point of view of lifting theory. We will report here some results
in this direction, taken from the paper \cite{Gutman93}, which show that the lifting
of sections we obtained in Theorem \ref{thm:lift_sects} is the best one can hope for.
More precisely, it will be clear from Theorem \ref{thm:lift_Leb-Bochn} that the inequality
in \eqref{eq:lift_sects_claim4} cannot, in general, be improved to an equality.
\medskip

Let \((\X,\Sigma,\mu)\) be a \(\sigma\)-finite measure space.
Then \(\mu\) is said to be \textbf{atomic} provided it has
at least one atom, while it is said to be \textbf{non-atomic}
when no atom of \(\mu\) exists. Moreover, we say that \(\mu\)
is \textbf{purely atomic} provided \(\mu|_E\) is atomic for
every \(E\in\Sigma\) such that \(\mu(E)>0\).
\begin{lemma}\label{lem:partition_non_pur_atom}
Let \((\X,\Sigma,\mu)\) be a complete, \(\sigma\)-finite measure space.
Suppose that \(\mu\) is not purely atomic. Let \(\ell\) be a von Neumann
lifting of \(\mu\). Then there exist a point \(\bar x\in\X\)
and a family \(\{E_n\}_{n\in\N}\subset\Sigma\) of pairwise disjoint
sets such that \(0<\mu(E_n)<\infty\) for every \(n\in\N\) and
\begin{equation}\label{eq:partition_non_pur_atom}
\mu\bigg(\X\setminus\bigcup_{n\in\N}E_n\bigg)=0,\qquad
\bar x\in\X\setminus\bigcup_{n\in\N}\ell(E_n).
\end{equation}
\end{lemma}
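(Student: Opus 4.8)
The plan is to fix the special point \(\bar x\) at the very beginning and then construct a decreasing sequence of sets whose liftings all contain \(\bar x\); the desired partition will be built from the successive differences of this sequence. First I would reduce to a non-atomic piece of finite measure: since \(\mu\) is not purely atomic there is \(E\in\Sigma\) with \(\mu(E)>0\) on which \(\mu\) is non-atomic, and intersecting \(E\) with an element of a \(\sigma\)-finite exhaustion of \(\X\) I may assume \(0<\mu(E)<+\infty\) while keeping \(\mu|_E\) non-atomic (a measurable subset of a non-atomic set carries a non-atomic restriction). Since \(\mu(\ell(E))=\mu(E)>0\) by axiom iii) of Definition \ref{def:lifting}, the set \(\ell(E)\) is non-empty, and I pick \(\bar x\in\ell(E)\) once and for all.

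Next I would build, by induction, a decreasing sequence \(E=F_0\supset F_1\supset F_2\supset\cdots\) with \(\mu(F_n)=\mu(E)/2^n\) and \(\bar x\in\ell(F_n)\) for every \(n\). Given \(F_n\), I use non-atomicity of \(\mu|_{F_n}\) to split it as a disjoint union \(F_n=F_n^1\sqcup F_n^2\) with \(\mu(F_n^1)=\mu(F_n^2)=\mu(F_n)/2\) (the classical halving property of finite non-atomic measures). Because \(\ell\) is a Boolean homomorphism, \(\ell(F_n)=\ell(F_n^1)\sqcup\ell(F_n^2)\) is again a disjoint union, so \(\bar x\) lies in exactly one of the two lifted halves; I set \(F_{n+1}\) to be the corresponding half. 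This guarantees \(\bar x\in\ell(F_{n+1})\) and \(\mu(F_{n+1})=\mu(E)/2^{n+1}\), completing the induction and yielding \(\bar x\in\bigcap_n\ell(F_n)\).

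I would then define the partition. Put \(D_n\coloneqq F_{n-1}\setminus F_n\) for \(n\ge 1\), which are pairwise disjoint with \(\mu(D_n)=\mu(E)/2^n\in(0,+\infty)\), and choose (by \(\sigma\)-finiteness) a countable partition \(\{C_m\}_m\) of \(\X\setminus E\) into sets of positive finite measure. Relabelling \(\{D_n\}\cup\{C_m\}\) as \(\{E_n\}_{n\in\N}\) gives pairwise disjoint sets of positive finite measure; since the only point left uncovered is \(\bigcap_n F_n\), which is \(\mu\)-null, the first identity in \eqref{eq:partition_non_pur_atom} holds. Finally, the telescoping relation \(\ell(D_n)=\ell(F_{n-1})\setminus\ell(F_n)\) (again from the homomorphism property) together with \(\bar x\in\ell(F_n)\) shows \(\bar x\notin\ell(D_n)\) for all \(n\), while \(\ell(C_m)\subset\ell(\X\setminus E)\) is disjoint from \(\ell(E)\ni\bar x\), so \(\bar x\notin\ell(C_m)\) for all \(m\); this is exactly \(\bar x\in\X\setminus\bigcup_n\ell(E_n)\).

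The step I expect to be the real heart of the argument is the inductive construction that pins \(\bar x\) inside every \(\ell(F_n)\): choosing \(\bar x\) first and exploiting the fact that a lifting fails to commute with countable unions (so that \(\bigcap_n\ell(F_n)\), though \(\mu\)-null, can and here does retain the point \(\bar x\)) is what prevents the lifted pieces from covering all of \(\X\). By contrast, a partition into finitely many sets would have its lifted pieces cover \(\X\) exactly, which is precisely why a genuinely countable family together with the non-atomic halving is needed.
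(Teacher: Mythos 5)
Your proof is correct and follows essentially the same route as the paper: reduce to a non-atomic piece of finite positive measure, pick \(\bar x\) in its lifting, and repeatedly halve using the full-range property of non-atomic measures together with the fact that \(\ell\) turns disjoint unions of sets into disjoint unions of their liftings. The only (cosmetic) difference is bookkeeping: the paper directly selects at each stage the half whose lifting misses \(\bar x\) and uses those halves as the partition pieces, whereas you keep the nested sequence whose liftings contain \(\bar x\) and then pass to the successive differences \(D_n=F_{n-1}\setminus F_n\) --- these are the same sets.
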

\begin{proof}
Fix a set \(F\in\Sigma\) with \(0<\mu(F)<\infty\) such that \(\mu|_F\) is non-atomic,
whose existence stems from the assumption that \(\mu\) is not purely atomic.
Being \(\mu\) a \(\sigma\)-finite measure, we can find an at most
countable partition \((G_i)_{i\in I}\subset\Sigma\) of
\(\X\setminus F\) into sets having finite, positive \(\mu\)-measure.
Now fix any point \(\bar x\in\ell(F)\) and notice that \(\bar x\notin
\bigcup_{i\in I}\ell(G_i)\). Since \(\mu|_F\) is non-atomic,
we know that it has full range (it follows, \emph{e.g.}, from Lyapunov
Convexity Theorem, see \cite{DiestelUhl77}), meaning that for any
\(\lambda\in\big(0,\mu(F)\big)\) there exists \(F'\in\Sigma\)
such that \(F'\subset F\) and \(\mu(F')=\lambda\). We can thus
proceed with the construction via the following recursive argument.
First, take a measurable set \(F_1\subset F\) such that
\(\mu(F_1)=\mu(F)/2\). Given that
\(\bar x\in\ell(F)=\ell(F_1)\cup\ell(F\setminus F_1)\),
we may assume (up to replacing \(F_1\) by \(F\setminus F_1\))
that \(\bar x\notin\ell(F_1)\). Next, we can argue in the very
same way starting from \(F\setminus F_1\), thus getting a set
\(F_2\subset F\setminus F_1\) such that \(\mu(F_2)=\mu(F)/4\)
and \(\bar x\notin\ell(F_2)\), and so on. All in all, with
this procedure we obtain a sequence \((F_j)_{j\in\N}\) of pairwise disjoint measurable
subsets of \(F\) such that \(\mu(F_j)=\mu(F)/2^j\) holds for every \(j\in\N\)
and \(\bar x\notin\bigcup_{j\in\N}\ell(F_j)\). Hence, the family
\(\{E_n\}_{n\in\N}\coloneqq\{G_i\}_{i\in I}\cup\{F_j\}_{j\in\N}\)
satisfies \eqref{eq:partition_non_pur_atom}.
\end{proof}
The previous lemma is useful in order to achieve the next result,
taken from \cite[Section 4.1]{Gutman93}; we report its proof for the reader's usefulness.
It is convenient, for the moment, to consider only measures that are not purely atomic.
The case of purely atomic measures is easier to handle and will be discussed in
the last part of this appendix (see Remark \ref{rmk:lift_purely_atom}).
\begin{theorem}[Lifting of Lebesgue--Bochner spaces]
\label{thm:lift_Leb-Bochn}
Let \((\X,\Sigma,\mu)\) be a complete, \(\sigma\)-finite measure space.
Suppose that \(\mu\) is not purely atomic. Let \(\B\)
be a Banach space and \(\ell\) a von Neumann lifting of \(\mu\).
Then the following two conditions are equivalent:
\begin{itemize}
\item[\(\rm i)\)] There exists an operator
\(\ell\colon L^\infty(\mu;\B)\to\mathcal L^\infty(\mu;\B)\)
satisfying
\[\begin{split}
\ell(\1_\X^\mu\bar v)(x)=\bar v,&\quad\text{ for every }
\bar v\in\B\text{ and }x\in\X,\\
\ell(v)(x)=\hat v(x),&\quad\text{ for every }v\in L^\infty(\mu;\B)
\text{ and }\mu\text{-a.e.\ }x\in\X,\\
\ell(v+w)(x)=\ell(v)(x)+\ell(w)(x),&\quad\text{ for every }
v,w\in L^\infty(\mu;\B)\text{ and }x\in\X,\\
\ell(f\cdot v)(x)=\ell(f)(x)\,\ell(v)(x),&\quad\text{ for every }
v\in L^\infty(\mu;\B)\text{, }f\in L^\infty(\mu)\text{, and }x\in\X,\\
\big\|\ell(v)(x)\big\|_\B=\ell\big(\|v(\cdot)\|_\B\big)(x),&
\quad\text{ for every }v\in L^\infty(\mu;\B)\text{ and }x\in\X,
\end{split}\]
where \(\hat\#\colon L^\infty(\mu;\B)\to\mathcal L^\infty(\mu;\B)\)
is the precise representative given by Theorem \ref{thm:precise_repr}.
\item[\(\rm ii)\)] The Banach space \(\B\) is finite-dimensional.
\end{itemize}
\end{theorem}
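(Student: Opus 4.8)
The plan is to prove the two implications separately; only the direction i)$\,\Rightarrow\,$ii) will require the hypothesis that \(\mu\) is not purely atomic, and it is there that the real work lies. For the direction ii)$\,\Rightarrow\,$i) I would simply invoke Proposition \ref{prop:improved_lift_sects} applied to the constant bundle \({\mathbf E}=\B\). When \(\B\) is finite-dimensional, every essentially bounded map is essentially compactly valued (bounded sets being relatively compact), so, after the reduction to a finite measure provided by Remark \ref{rmk:wlog_mu_finite}, one has \({\rm ECV}(\mu;\B)=\Gamma_\infty(\mu;\B)=L^\infty(\mu;\B)\). Proposition \ref{prop:improved_lift_sects} then yields a map \(\ell\) with \(\ell(v)(x)\in J_\B(\B)\) and \(\big\|\ell(v)(x)\big\|_{\B''}=\ell\big(\|v(\cdot)\|_\B\big)(x)\) for \emph{all} \(v\) and \(x\). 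Identifying \(\B''\) with \(\B\) via the (now surjective) James embedding \(J_\B\), the properties \eqref{eq:lift_sects_claim1}--\eqref{eq:lift_sects_claim_extra} translate verbatim into the five conditions listed in i).

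The substantial direction is i)$\,\Rightarrow\,$ii), which I would establish in contrapositive form: assuming \(\B\) infinite-dimensional, I would contradict the existence of an operator as in i). First, since the closed unit ball of an infinite-dimensional \(\B\) is noncompact, Riesz's lemma provides a sequence \((\bar v_n)_{n\in\N}\) in the unit sphere with \(\|\bar v_n-\bar v_m\|_\B\geq\tfrac12\) whenever \(n\neq m\). Next, as \(\mu\) is not purely atomic, Lemma \ref{lem:partition_non_pur_atom} furnishes a point \(\bar x\in\X\) and a partition \(\{E_n\}_{n\in\N}\) of \(\X\) (up to a null set) into positive finite-measure sets with \(\bar x\notin\bigcup_n\ell(E_n)\). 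I would then set \(v\coloneqq\sum_n\1_{E_n}^\mu\bar v_n\in L^\infty(\mu;\B)\), which is strongly measurable with \(\|v(\cdot)\|_\B=\1_\X^\mu\), and introduce \(w\coloneqq\ell(v)(\bar x)\in\B\) together with \(u\coloneqq v-\1_\X^\mu w\).

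The contradiction is then extracted at the single point \(\bar x\). On the one hand, the constant-preservation and linearity properties in i) give \(\ell(u)(\bar x)=\ell(v)(\bar x)-w=0\), so norm preservation forces \(\ell\big(\|u(\cdot)\|_\B\big)(\bar x)=0\). On the other hand, \(\|u(\cdot)\|_\B=\|\bar v_n-w\|_\B\) on each \(E_n\); since the \(\bar v_n\) are \(\tfrac12\)-separated, at most one index \(n_0\) can satisfy \(\|\bar v_{n_0}-w\|_\B<\tfrac14\), whence \(\|u(\cdot)\|_\B\geq\tfrac14\) holds \(\mu\)-a.e.\ on \(\X\setminus E_{n_0}\), i.e.\ \(\|u(\cdot)\|_\B\geq\tfrac14\,\1_{\X\setminus E_{n_0}}^\mu\) in \(L^\infty(\mu)\). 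The function lifting \(\ell\colon L^\infty(\mu)\to\mathcal L^\infty(\Sigma)\) is positive (indeed \(\ell(h)=\ell(\sqrt h)^2\geq 0\) for \(h\geq 0\) by \eqref{eq:lift_fcts_p2}) and hence monotone, so \(\ell\big(\|u(\cdot)\|_\B\big)(\bar x)\geq\tfrac14\,\1_{\ell(\X\setminus E_{n_0})}(\bar x)=\tfrac14\), because \(\ell(\X\setminus E_{n_0})=\X\setminus\ell(E_{n_0})\) contains \(\bar x\). This yields \(0\geq\tfrac14\), the desired contradiction, and completes the equivalence.

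The main obstacle is precisely the infinite-sum nature of \(v\): the operator \(\ell\) is only finitely additive, so \(\ell(v)(\bar x)\) cannot be evaluated termwise (each summand satisfies \(\ell(\1_{E_n}^\mu\bar v_n)(\bar x)=\1_{\ell(E_n)}(\bar x)\,\bar v_n=0\), yet \(\ell(v)(\bar x)\) need not vanish), and \(v\) lies outside the closure of the simple sections, so the defining properties do not pin \(\ell\) down on it directly. The device that circumvents this is to compare \(v\) with the \emph{constant} section of value \(w=\ell(v)(\bar x)\) and to push the problem, through norm preservation, onto the scalar lifting, where positivity is available; the separation of the \(\bar v_n\) (guaranteed by \(\dim\B=\infty\)) is exactly what keeps \(\|v(\cdot)-w\|_\B\) bounded away from \(0\) almost everywhere.
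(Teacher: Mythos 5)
Your proof is correct and follows essentially the same route as the paper's: the same reduction of ii)$\Rightarrow$i) to Proposition \ref{prop:improved_lift_sects}, and for i)$\Rightarrow$ii) the same test section \(v=\sum_n\1_{E_n}^\mu\bar v_n\) built from a \(\tfrac12\)-separated sequence and the partition of Lemma \ref{lem:partition_non_pur_atom}, with the contradiction extracted at \(\bar x\) by comparing \(v\) with the constant section of value \(\ell(v)(\bar x)\) and using norm preservation plus positivity of the scalar lifting. The only (harmless) difference is organizational: the paper first shows \(\ell(v)(\bar x)\) must coincide with some \(\bar v_{\bar n}\) and then derives the contradiction, whereas you merge the two steps by observing that at most one \(\bar v_{n_0}\) can lie within \(\tfrac14\) of \(w\).
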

\begin{proof}
The implication \({\rm ii)}\Rightarrow{\rm i)}\) stems from
Proposition \ref{prop:improved_lift_sects} and the observation
that \(L^\infty(\mu;\B)\) and \({\rm ECV}(\mu;\B)\) coincide
when \(\B\) is finite-dimensional. Conversely, in order to prove the
validity of the converse implication \({\rm i)}\Rightarrow{\rm ii)}\),
we argue by contradiction: suppose that \(\rm i)\) holds and
\(\B\) is infinite-dimensional. Then we can find a sequence
\((\bar v_n)_{n\in\N}\subset\B\) such that \(\|\bar v_n\|_\B=1\)
for every \(n\in\N\) and \(\|\bar v_n-\bar v_m\|_\B\geq 1/2\) for
every \(n,m\in\N\) with \(n\neq m\). Recalling Lemma
\ref{lem:partition_non_pur_atom}, we can find \(\bar x\in\X\)
and a sequence of pairwise disjoint sets
\((E_n)_{n\in\N}\subset\Sigma\) with \(\bar x\notin\bigcup_n E_n\)
such that \(\ell(E_n)=E_n\) for all \(n\in\N\) and
\(\mu\big(\X\setminus\bigcup_n E_n\big)=0\). Define
\(v\coloneqq\sum_{n\in\N}\1_{E_n}^\mu\bar v_n\in L^\infty(\mu;\B)\).
Calling \(\bar v\coloneqq\ell(v)(\bar x)\in\B\), we claim that
\(\bar v=\bar v_{\bar n}\) for some \(\bar n\in\N\). If this were
not the case, then there would exist \(\varepsilon>0\) such that
\(\|\bar v-\bar v_n\|_\B\geq\varepsilon\) for every \(n\in\N\);
here, we are using the fact that \(\{\bar v_n\,:\,n\in\N\}\) is
a discrete set. This is not possible, since it would imply that
\[\begin{split}
0&=\big\|\ell(v)(\bar x)-\bar v\big\|_\B=
\big\|\ell(v)(\bar x)-\ell(\1_\X^\mu\bar v)(\bar x)\big\|_\B=
\big\|\ell(v-\1_\X^\mu\bar v)(\bar x)\big\|_\B=
\ell\big(\|v(\cdot)-\bar v\|_\B\big)(\bar x)\\
&=\ell\bigg(\sum_{n\in\N}\|\bar v_n-\bar v\|_\B\1_{E_n}^\mu\bigg)(\bar x)
\geq\ell\big(\varepsilon\1_{\bigcup_n E_n}^\mu\big)(\bar x)
=\varepsilon\,\ell(\1_\X^\mu)(\bar x)=\varepsilon.
\end{split}\]
Let \(\bar n\in\N\) be such that \(\bar v=\bar v_{\bar n}\).
Observe that \(\bar x\in\X\setminus E_{\bar n}=\X\setminus
\ell(E_{\bar n})=\ell(\X\setminus E_{\bar n})\) and thus
\[\begin{split}
0&=\big\|\ell(v)(\bar x)-\bar v\big\|_\B=
\big\|\1_{\ell(\X\setminus E_{\bar n})}(\bar x)\,\ell(v)(\bar x)-
\bar v_{\bar n}\big\|_\B=\big\|\ell\big(\1_{\X\setminus E_{\bar n}}^\mu
\cdot v-\1_{\X\setminus E_{\bar n}}^\mu\bar v_{\bar n}\big)
(\bar x)\big\|_\B\\
&=\ell\big(\1_{\X\setminus E_{\bar n}}^\mu\|v(\cdot)-
\bar v_{\bar n}\|_\B\big)(\bar x)=\ell\bigg(\sum_{n\neq\bar n}
\|\bar v_n-\bar v_{\bar n}\|_\B\1_{E_n}^\mu\bigg)(\bar x)\geq
\ell\bigg(\frac{1}{2}\1_{\X\setminus E_{\bar n}}^\mu\bigg)(\bar x)
=\frac{1}{2},
\end{split}\]
which leads to a contradiction. Therefore,
we have proven that \(\B\) is finite-dimensional.
\end{proof}
\begin{remark}\label{rmk:lift_LB_larger}{\rm
By inspecting the proof of Theorem \ref{thm:lift_Leb-Bochn}, it is evident that a stronger
claim holds: if \(\B\) is infinite-dimensional, then it is not possible to find a
lifting \(L^\infty(\mu;\B)\to\mathcal L^\infty(\mu;\mathbb V)\)
for any Banach space \(\mathbb V\) where \(\B\) can be embedded linearly and isometrically.
\fr}\end{remark}
We point out that all measurable maps are \emph{almost everywhere constant on atoms}: given a metric space \((\Y,\sfd)\),
a strongly measurable map \(\varphi\colon\X\to\Y\), and an atom \(A\) of \(\mu\), it holds that
\begin{equation}\label{eq:const_on_atoms}
\mu\big(\big\{x\in A\;\big|\;\varphi(x)\neq y\big\}\big)=0,\quad\text{ for a (unique) element }y\in\Y.
\end{equation}
To prove it, fix \(N\in\Sigma\) and a separable Borel set \(\tilde\Y\subset\Y\) such that \(\mu(N)=0\) and \(\varphi(\X\setminus N)\subset\tilde\Y\).
Thanks to Lindel\"{o}f Lemma, for any \(n\in\N\) we can find a Borel partition \((E_n^k)_{k\in\N}\) of \(\tilde\Y\) such
that the diameter of each set \(E_n^k\) does not exceed \(1/n\). Without loss of generality, we can also assume that
\((E_{n+1}^k)_k\) is a refinement of \((E_n^k)_k\), \emph{i.e.}, for any \(k\in\N\) there exists \(\ell\in\N\) for
which \(E_{n+1}^k\subset E_n^\ell\). Since \(A\) is an atom of \(\mu\), we can find \((k_n)_{n\in\N}\subset\N\) such
that \(\mu\big(A\setminus\varphi^{-1}(E_n)\big)=0\), where \(E_n\coloneqq E_n^{k_n}\). Notice that \(E\coloneqq\bigcap_{n\in\N}E_n\)
has null diameter, thus in particular it is either empty or a singleton. Given that \(\mu\big(A\setminus\varphi^{-1}(E)\big)=0\),
we deduce that \(E\neq\varnothing\), so that \(E=\{y\}\) for some \(y\in\Y\) and thus \(\varphi(x)=y\) for \(\mu\)-a.e.\ \(x\in A\).
Consequently, the claim \eqref{eq:const_on_atoms} is proven.
\begin{remark}[Lifting of purely atomic measures]\label{rmk:lift_purely_atom}{\rm
Let \((\X,\Sigma,\mu)\) be a \(\sigma\)-finite measure space, with
\(\mu\) purely atomic. As proven, \emph{e.g.}, in
\cite[Theorem 2.2]{Johnson70}, one can find an at most countable
partition \((A_i)_{i\in I}\) of \(\X\) made of atoms of \(\mu\).
Observe that for any \(E\in\Sigma\) there exists a unique index
set \(I_E\subset I\) such that
\(\mu\big(E\Delta\bigcup_{i\in I_E}A_i\big)=0\).
Then we define the map \(\ell\colon\Sigma\to\Sigma\) as
\[
\ell(E)\coloneqq\bigcup_{i\in I_E}A_i,\quad\text{ for every }E\in\Sigma.
\]
It is straightforward to verify that \(\ell\) is a von Neumann lifting
of the measure \(\mu\).

Now fix an arbitrary Banach space \(\B\). Recalling
\eqref{eq:const_on_atoms}, we see that for any
\(v\in L^\infty(\mu;\B)\) there is a unique
\((\bar v_i)_{i\in I}\subset\B\) such that
\(\sup_{i\in I}\|\bar v_i\|_\B<+\infty\) and
\(v=\sum_{i\in I}\1_{A_i}^\mu\bar v_i\), thus we set
\[
\ell(v)\coloneqq\sum_{i\in I}\1_{A_i}\bar v_i\in\mathcal L^\infty(\mu;\B).
\]
It is then easy to check that the resulting operator
\(\ell\colon L^\infty(\mu;\B)\to\mathcal L^\infty(\mu;\B)\)
verifies all the properties listed in item i) of Theorem \ref{thm:lift_Leb-Bochn}.
\fr}\end{remark}
\sectionlinetwo{Black}{88} 
\medskip

\noindent\textbf{Acknowledgements.}
The authors thank Camillo Brena for the useful discussions on the topics of this paper.
The first named author acknowledges the support by the project 2017TEXA3H ``Gradient flows, Optimal Transport and Metric Measure Structures'',
funded by the Italian Ministry of Research and University. The second named author acknowledges the support by the Balzan project led by
Luigi Ambrosio. Both authors acknowledge the support by the Academy of Finland, grant no.\ 314789.
\def\cprime{$'$} \def\cprime{$'$}


\begin{thebibliography}{10}

\bibitem{BenyaminiLindenstrauss00}
{\sc Y.~Benyamini and J.~Lindenstrauss}, {\em Geometric nonlinear functional
  analysis. {V}ol. 1}, in American Mathematical Society Colloquium
  Publications, vol.~48, American Mathematical Society, Providence, RI, 2000.

\bibitem{BessagaPelczynski75}
{\sc C.~Bessaga and A.~Pe{\l}czy\'{n}ski}, {\em Selected topics in
  infinite-dimensional topology}, Polska Akademia Nauk. Instytut Matematyczny.
  Monografie matematyczne, 1975.

\bibitem{BliedtnerLoeb00}
{\sc J.~Bliedtner and P.~A. Loeb}, {\em The {O}ptimal {D}ifferentiation {B}asis
  and {L}iftings of ${L}^\infty$}, Transactions of the American Mathematical
  Society, 352 (2000), pp.~4693--4710.

\bibitem{Bochner33}
{\sc S.~Bochner}, {\em Integration von {F}unktionen, deren {W}erte die
  {E}lemente eines {V}ektorraumes sind}, Fund. Math., 20 (1933), pp.~262--276.

\bibitem{Bogachev07}
{\sc V.~I. Bogachev}, {\em Measure theory. {V}ol. {II}}, Springer-Verlag,
  Berlin, 2007.

\bibitem{Bourbaki65}
{\sc N.~Bourbaki}, {\em Topologie g\'{e}n\'{e}rale}, Hermann, Paris, 1965.

\bibitem{BG22}
{\sc C.~Brena and N.~Gigli}, {\em Local vector measures}.
\newblock Preprint, arXiv:2206.14864, 2022.

\bibitem{Bruckner71}
{\sc A.~M. Bruckner}, {\em Differentiation of integrals}, The American
  Mathematical Monthly, 78 (1971), p.~53.

\bibitem{BrucknerBrucknerThomson97}
{\sc A.~M. Bruckner, J.~B. Bruckner, and B.~S. Thomson}, {\em Real {A}nalysis},
  Prentice-Hall, 1997.

\bibitem{BrucknerBrucknerThomson08}
\leavevmode\vrule height 2pt depth -1.6pt width 23pt, {\em Real {A}nalysis,
  {S}econd {E}dition}, ClassicalRealAnalysis.com, 2008.

\bibitem{DiMarinoLucicPasqualetto21}
{\sc S.~Di~Marino, D.~Lu\v{c}i\'{c}, and E.~Pasqualetto}, {\em Representation
  theorems for normed modules}.
\newblock Preprint, arXiv:2109.03509, 2021.

\bibitem{DiestelUhl77}
{\sc J.~Diestel and J.~J. Uhl, Jr.}, {\em Vector measures}, American
  Mathematical Society, Providence, R.I., 1977.

\bibitem{Dunford35}
{\sc N.~Dunford}, {\em Integration in general analysis}, Trans. Amer. Math.
  Soc., 37 (1935), pp.~441--453.

\bibitem{DunfordSchwartz58}
{\sc N.~Dunford and J.~Schwartz}, {\em Linear {O}perators. ({I}): {G}eneral
  theory}, Interscience Publishers, Pure and applied mathematics, 1958.

\bibitem{Fremlin3}
{\sc D.~Fremlin}, {\em Measure {T}heory: {M}easure algebras. Volume 3}, Measure
  Theory, Torres Fremlin, 2011.

\bibitem{Gigli17}
{\sc N.~Gigli}, {\em {L}ecture notes on differential calculus on $\sf
  {R}{C}{D}$ spaces}, Publ. RIMS Kyoto Univ. 54,  (2018).

\bibitem{Gigli18}
\leavevmode\vrule height 2pt depth -1.6pt width 23pt, {\em Nonsmooth
  differential geometry - an approach tailored for spaces with {R}icci
  curvature bounded from below}, Mem. Amer. Math. Soc., 251 (2018), p.~161.
  
\bibitem{GigliLucicPasqualetto22}
{\sc N.~Gigli and D.~Lu\v{c}i\'{c} and E.~Pasqualetto}, {\em Duals and pullbacks of normed modules}.
\newblock Preprint, arXiv:2207.04972, 2022.

\bibitem{GrafvonWezsacker76}
{\sc S.~Graf and H.~von Weizs{\"a}cker}, {\em On the existence of lower
  densities in noncomplete measure spaces}, in Measure theory, Springer, 1976,
  pp.~155--158.

\bibitem{Guo-2011}
{\sc T.~X. Guo}, {\em Recent progress in random metric theory and its
  applications to conditional risk measures}, Science China Mathematics, 54
  (2011), pp.~633--660.

\bibitem{Gut93}
{\sc A.~E. Gutman}, {\em Banach bundles in the theory of lattice-normed spaces.
  {I}. {C}ontinuous {B}anach bundles}, Siberian Adv. Math., 3 (1993),
  pp.~1--55.

\bibitem{Gutman93}
{\sc A.~E. Gutman}, {\em Banach bundles in the theory of lattice-normed spaces.
  {I}{I}. {M}easurable {B}anach bundles}, Siberian Advances in Mathematics, 3
  (1993), pp.~8--40.
 
\bibitem{GutKop00}
{\sc A.~E. Gutman and A.~V. Koptev}, {\em Dual {B}anach bundles}, in
  Nonstandard analysis and vector lattices, vol.~525 of Math. Appl., Kluwer
  Acad. Publ., Dordrecht, 2000, pp.~105--159.

\bibitem{HLR91}
{\sc R.~Haydon, M.~Levy, and Y.~Raynaud}, {\em Randomly normed spaces}, vol.~41
  of Travaux en Cours [Works in Progress], Hermann, Paris, 1991.

\bibitem{Heinonen01}
{\sc J.~Heinonen}, {\em Lectures on analysis on metric spaces}, Universitext,
  Springer-Verlag, New York, 2001.

\bibitem{Hoffmann-Jorgensen71}
{\sc J.~Hoffmann-J{\o}rgensen}, {\em Existence of conditional probabilities},
  Mathematica Scandinavica, 28 (1971), pp.~257--264.

\bibitem{HNVW16}
{\sc T.~Hyt{\"o}nen, J.~Neerven, M.~Veraar, and L.~Weis}, {\em Analysis in
  {B}anach {S}paces: {V}olume {I}: {M}artingales and {L}ittlewood-{P}aley
  {T}heory}, Ergebnisse der Mathematik und ihrer Grenzgebiete. 3. Folge / A
  Series of Modern Surveys in Mathematics, Springer, 2016.

\bibitem{IonescuTulceaIonescuTulcea61}
{\sc A.~Ionescu~Tulcea and C.~Ionescu~Tulcea}, {\em On the lifting property
  ({I})}, J. Math. Anal. Appl., 3 (1961), pp.~537--546.

\bibitem{IonescuTulceaIonescuTulcea67}
\leavevmode\vrule height 2pt depth -1.6pt width 23pt, {\em On the existence of
  a lifting commuting with the left translations of an arbitrary locally
  compact group}, in Proc. Fifth Berkeley Sympos. Math. Statist. and
  Probability (Berkeley, Calif., 1965/66), Vol. II: Contributions to
  Probability Theory, Part 1, Univ. of California Press, 1967, pp.~63--97.

\bibitem{IonescuTulceaIonescuTulcea69b}
\leavevmode\vrule height 2pt depth -1.6pt width 23pt, {\em Liftings for
  abstract valued functions and separable stochastic processes}, Probability
  Theory and Related Fields, 13 (1969), pp.~114--118.

\bibitem{IonescuTulceaIonescuTulcea69}
\leavevmode\vrule height 2pt depth -1.6pt width 23pt, {\em Topics in the
  {T}heory of {L}ifting}, vol.~48 of Ergebnisse der Mathematik und ihrer
  Grenzgebiete, Springer Berlin Heidelberg, 1969.

\bibitem{IonescuTulcea65}
{\sc C.~Ionescu~Tulcea}, {\em On the lifting property and disintegration of
  measures}, Bulletin of the American Mathematical Society, 71 (1965),
  pp.~829--842.

\bibitem{Johnson70}
{\sc R.~A. Johnson}, {\em Atomic and {N}onatomic {M}easures}, Proceedings of
  the American Mathematical Society, 25 (1970), pp.~650--655.

\bibitem{Johnson80}
\leavevmode\vrule height 2pt depth -1.6pt width 23pt, {\em Strong liftings that
  are not {B}orel liftings}, Proc. Amer. Math. Soc., 80 (1980), pp.~234--236.

\bibitem{Kolzow68}
{\sc D.~K{\"o}lzow}, {\em Differentiation von {M}a{\ss}en}, Lecture notes in
  mathematics, Springer, 1968.

\bibitem{Lebesgue10}
{\sc H.~Lebesgue}, {\em Sur l'int\'{e}gration des fonctions discontinues},
  Annales Scientifiques de l'\'{E}cole Normale Sup\'{e}rieure, 27 (1910),
  pp.~361--450.

\bibitem{LoebTalvila04}
{\sc P.~A. Loeb and E.~Talvila}, {\em Lusin's theorem and {B}ochner
  integration}, Scientiae Mathematicae Japonicae Online, 10 (2004), pp.~55--62.

\bibitem{Losert79}
{\sc V.~Losert}, {\em A measure space without the strong lifting property},
  Math. Ann., 239 (1979), pp.~119--128.

\bibitem{LP18}
{\sc D.~Lu\v{c}i\'{c} and E.~Pasqualetto}, {\em The {S}erre-{S}wan theorem for
  normed modules}, Rendiconti del Circolo Matematico di Palermo Series 2, 68
  (2019), pp.~385--404.

\bibitem{LukesMalyZajicek86}
{\sc J.~Luke\v{s}, J.~Mal\'{y}, and L.~Zaj\'{i}\v{c}ek}, {\em Fine
  {T}opological {M}ethods in {R}eal {A}nalysis and {P}otential {T}heory},
  vol.~1189 of Lecture Notes in Mathematics, Springer-Verlag, 1986.

\bibitem{Maharam58}
{\sc D.~Maharam}, {\em On a theorem of von {N}eumann}, Proceedings of the
  American Mathematical Society, 9 (1958), pp.~987--994.

\bibitem{Mokobodzki75}
{\sc G.~Mokobodzki}, {\em Rel\' evement bor\' elien compatible avec une classe
  d'ensembles n\' egligeables. {A}pplication \' a la d\' esint\' egration des
  mesures}, S\' eminaire de probabilit\' es de Strasbourg, 9 (1975),
  pp.~437--442.

\bibitem{Shelah83}
{\sc S.~Shelah}, {\em Lifting problem of the measure algebra}, Israel J. Math.,
  45 (1983), pp.~90--96.

\bibitem{StraussMacherasMusial02}
{\sc W.~Strauss, N.~D. Macheras, and K.~Musia{\l}}, {\em Liftings}, in Handbook
  of Measure Theory: In two volumes (edited by E. Pap), Volume II, Part 7,
  Elsevier Science, 2002.

\bibitem{Traynor74}
{\sc T.~Traynor}, {\em An elementary proof of the lifting theorem}, Pacific J.
  Math., 53 (1974), pp.~267--272.

\bibitem{vonNeumann31}
{\sc J.~von Neumann}, {\em Algebraische {R}epr\" asentanten der {F}unktionen
  ``bis auf eine {M}enge von {M}a{\ss}e {N}ull''}, Crelle's J. Math., 165
  (1931), pp.~109--115.

\bibitem{vonNeumannStone35}
{\sc J.~von Neumann and M.~H. Stone}, {\em The determination of representative
  elements in the residual classes of a {B}oolean algebra}, Fund. Math., 25
  (1935), pp.~353--378.

\end{thebibliography}
\end{document}